\newtheorem{definition}{Definition}
\newtheorem{theorem}{Theorem}
\newtheorem{assumption}{Assumption}
\newtheorem{corollary}{Corollary}
\newtheorem{lemma}{Lemma}
\newcommand{\ip}[1]{\langle #1 \rangle}
\newcommand{\st}{{\textrm{s.t. }}}
\newcommand{\obf}{{{\bf 1}}}
\newenvironment{customthm}[1]
  {\innercustomthm}
  {\endinnercustomthm}
\newenvironment{customlem}[1]
  {\innercustomlem}
  {\endinnercustomlem}
\newcommand{\asone}{{1 }}
\newcommand{\astwo}{{2 }}
\newcommand{\thtwo}{{2 }}
\title{A Semidefinite Program for Structured Blockmodels}
\author{David Choi}
\begin{document}

\maketitle

\begin{abstract}
Semidefinite programs have recently been developed for the problem of community detection, which may be viewed as a special case of the stochastic blockmodel. Here, we develop a semidefinite program that can be tailored to other instances of the blockmodel, such as non-assortative networks and overlapping communities. We establish label recovery in sparse settings, with conditions that are analogous to recent results for community detection. In settings where the data is not generated by a blockmodel, we give an oracle inequality that bounds excess risk relative to the best blockmodel approximation. Simulations are presented for  community detection, for overlapping communities, and for latent space models.
\end{abstract}

%

\section{Introduction}

The stochastic blockmodel \cite{holland1983stochastic, bickel2009nonparametric} is a popular class of models for network data in which each node is assumed to belong to a latent class. Various sub-families of the blockmodel now exist, such as community structure \cite{karrer2011stochastic}, hierarchical community structure \cite{lyzinski2015community, peixoto2014hierarchical}, and overlapping blockmodels \cite{yang2013overlapping, zhang2014detecting, zhang2014scalable}, as well as relatives such as latent space models \cite{hoff2002latent}, mixed membership \cite{airoldi2008mixed}, degree-corrected blockmodels \cite{karrer2011stochastic}, and time-varying blockmodels \cite{matias2015statistical}.

For all of these models, estimation of the latent nodal classes is an active area of research. For blockmodels, spectral methods are known to yield asymptotically consistent estimates, provided that the network is sufficiently large and dense \cite{lyzinski2014perfect, rohe2011spectral}. For the special case of community structure, it is additionally known that  specialized methods can achieve weakly consistent estimates even when spectral methods fail completely due to sparsity \cite{mossel2012stochastic, krzakala2013spectral, gao2015achieving, amini2013pseudo, zhang2015minimax}. Examples of such methods include semidefinite programming \cite{amini2014semidefinite, guedon2015community, montanari2015semidefinite} and message passing \cite{decelle2011inference, zhang2014scalable}. For other variants of the blockmodel and their relatives, estimation methods also exist but are less understood; in particular, theory analogous to that of community detection does not yet seem to exist for these cases.

To address this gap, we show in this paper that semidefinite programming can be applied not only to community detection, but also to other blockmodel sub-families as well. Specifically, we propose a semidefinite program that can be tailored to any specific instance of the blockmodel. For this program, we prove estimation bounds that are analogous to those already known for community detection, including weak consistency in the bounded degree setting. When the data is not generated from a blockmodel, the semidefinite program can be used to construct a ``de-noised'' version of the data matrix, and we provide an oracle inequality bounding its error relative to the best blockmodel approximation. 

The organization of the paper is as follows. Section \ref{sec: formulation} presents the semidefinite program. Section \ref{sec: theorem} presents a convergence analysis for sparse data. Section \ref{sec: admm} discusses numerical optimization. Section \ref{sec: examples} gives simulations results. Proofs are contained in the appendix.

\section{Problem Formulation} \label{sec: formulation}

In this section, we present the generative models that we will consider in this paper; derive a semidefinite relaxation for the combinatorial problem of estimating the latent nodal classes; and present estimators for blockmodel and non-blockmodel settings. 

\subsection{Preliminary Notation} \label{sec: notation}

Given a matrix $M \in \mathbb{R}^{nK \times nK}$, let $M^{(ij)} \in \mathbb{R}^{K \times K}$ for $i,j \in [n]$ denote its $(i,j)$th submatrix of size $K \times K$, and similarly for $x \in \mathbb{R}^{nK}$ let $x^{(i)} \in \mathbb{R}^K$ for $i=1,\ldots,n$ denote its $i$th subvector of length $K$, so that  
\[M = \left[\begin{array}{ccc} M^{(1,1)} & \cdots & M^{(1,n)} \\ \vdots & \ddots & \vdots \\ M^{(n,1)} & \cdots & M^{(n,n)}\end{array}\right] \qquad \textrm{ and } \qquad x = \left[\begin{array}{c} x^{(1)} \\ \vdots \\ x^{(n)}\end{array}\right].\]
We will use $M^{(ij)}_{ab}$ to denote the $(a,b)$th entry of the submatrix $M^{(ij)}$, and likewise use $x^{(i)}_a$ to denote the $a$th entry of the subvector $x^{(i)}$. This implies that $M^{(ij)}_{ab} = M_{a + (i-1)K, b + (j-1)K}$ and $x^{(i)}_a = x_{a + (i-1)K}$.

\subsection{Generative Models} \label{sec: models}

\paragraph{Stochastic Blockmodel} Let $A \in \{0,1\}^{n\times n}$ denote the symmetric adjacency matrix of a undirected network with $n$ nodes. In a stochastic blockmodel with $K$ classes, each node has a random latent class $z_i \in [K]$, and 
the upper triangular entries of $A$ are independent Bernoulli when conditioned on $z$:
\begin{align} \label{eq: sbm}
z_i & \sim \textrm{Discrete}(\pi) \\
A_{ij} & \sim \textrm{Bernoulli}(\theta_{z_i,z_j}), \qquad i,j \in [n],\, i<j \\
\nonumber A_{ji} & = A_{ij},
\end{align}
where $\pi$ is a probability distribution over $[K]$ giving the expected class frequencies, and $\theta \in [0,1]^{K \times K}$ is a symmetric matrix that gives the connection probabilities between each class type.

\paragraph{General Model} Under a more general model, $A \in \{0,1\}^{n \times n}$ is a random matrix generated by 
\begin{align} \label{eq: bernoulli}
A_{ij} & \sim \operatorname{Bernoulli}(P_{ij}) & i < j \\
\nonumber A_{ji} & = A_{ij},
\end{align}
where $P \in [0,1]^{n \times n}$ is symmetric and satisfies $P_{ii}=0$ for $i\in [n]$. It can be seen that the stochastic blockmodel is a special case of \eqref{eq: bernoulli}, where $P_{ij} = \theta_{z_i,z_j}$. 

\subsection{Semidefinite program} \label{sec: sdp}

We will assume that $A \in \{0,1\}^{n \times n}$ is observed, and the estimation task is to find $z \in [K]^n$ maximizing the generic combinatorial problem
\begin{equation}\label{eq: combinatoric}
\max_{z \in [K]^n} \sum_{i=1}^n \sum_{j=1}^n f_{ij}(z_i,z_j),
\end{equation}
under some choice of objective functions $f_{ij}:[K]^2\mapsto \mathbb{R}$ for $i,j \in [n]$. In this paper, we will let $\{f_{ij}\}$ equal the likelihood function
\begin{equation}\label{eq: likelihood}
f_{ij}(z_i,z_j) = A_{ij} \log \hat{\theta}_{z_iz_j} + (1-A_{ij}) \log (1-\hat{\theta}_{z_i,z_j}),
\end{equation}
in which case \eqref{eq: combinatoric} finds the maximum likelihood assignment of $z$ under a specified parameter matrix $\hat{\theta} \in [0,1]^{K\times K}$. Note that $\hat{\theta}$ may differ from the actual generative model for $A$. 

Optimizing \eqref{eq: combinatoric} is not computationally tractable, so we will relax it into a semidefinite program. Let $F \in \mathbb{R}^{nK \times nK}$ denote a matrix with submatrices $F^{(ij)} \in \mathbb{R}^{K \times K}$ given by
\begin{equation}\label{eq: F}
F^{(ij)}_{ab} = f_{ij}(a,b) \qquad \qquad a,b \in [K],
\end{equation}
so that \eqref{eq: combinatoric} can be expressed as
\begin{align} \label{eq: generic}
\max_{z \in [K]^n} \sum_{i=1}^n \sum_{j=1}^n e_{z_i}^T F^{(ij)} e_{z_j},
\end{align}
where $e_1,\ldots,e_K \in \{0,1\}^K$ denote the canonical basis in $\mathbb{R}^K$. This can be further rewritten as
\begin{align*}
	\max_{z \in [K]^n, x \in \{0,1\}^{nK}} & \ip{F, xx^T} \quad \textrm{subject to } x = \left[\begin{array}{c} e_{z_1} \\ \vdots \\ e_{z_n} \end{array}\right].
\end{align*}
This suggests the following semidefinite program relaxation, where $xx^T$ is approximated by a positive semidefinite matrix $X$:
\begin{align} \label{eq: sdp}
\max_{X \in \mathbb{R}^{nK \times nK}}& \ip{F, X} \\
\nonumber \st & X \succeq 0, X \geq 0 \\
\nonumber & \sum_{a=1}^K \sum_{b=1}^K X^{(ij)}_{ab} = 1 \qquad \forall\ i,j \in [n],
\end{align}
where $X \succeq 0$ denotes that $X$ positive semidefinite, and $X \geq 0$ denotes that $X$ is elementwise non-negative. For any feasible $X \in \mathbb{R}^{nK \times nK}$, each submatrix $X^{(ij)} \in \mathbb{R}^{K \times K}$ is nonnegative and sums to one, and can be viewed as a relaxed version of the indicator matrix $e_{z_i}e_{z_j}^T$ encoding the class pair $(z_i,z_j)$.

\subsection{Matrix Denoising and Blockmodel Estimation} \label{sec: estimation}

Let $\hat{X} \in [0,1]^{nK \times nK}$ denote the solution to the semidefinite program \eqref{eq: sdp}, and let $A$ be generated from the general model \eqref{eq: bernoulli}, with generative matrix $P \in [0,1]^{n \times n}$. Let $\hat{P} \in [0,1]^{n \times n}$ denote a ``MAP estimate'' of $P$, constructed by treating each submatrix $\hat{X}^{(ij)}$ as a probability distribution over $[K]^2$:
\begin{align}\label{eq: Phat MAP}
    \hat{P}_{ij} &= \arg \max_{\vartheta \in [0,1]} \sum_{a=1}^K \sum_{b=1}^K \hat{X}_{ab}^{(ij)}\cdot 1\{\hat{\theta}_{ab} = \vartheta \}
\end{align}
Alternatively, let $\tilde{P}$ denote a randomized estimate of $P$, where each dyad is an independent random variable with distribution 
\begin{align} \label{eq: Phat randomized}
	\tilde{P}_{ij} & = \hat{\theta}_{ab} \textrm{ with probability } \hat{X}^{(ij)}_{ab} \qquad  \quad \forall\, i < j \\
\nonumber	\tilde{P}_{ji} &= \tilde{P}_{ij}.
\end{align}
Let $\hat{z} \in [K]^n$ denote the cluster labels found by spectral clustering of $\hat{P}$ -- i.e., applying $K$-means to the first $K$ eigencoordinates of $\hat{P}$. If $A$ is generated by a blockmodel, then the generative $P$ will be block structured, with blocks induced by $\theta$ and $z$. In this case, we will use $\hat{z}$ to estimate $z$, up to label-switching permutation.

To estimate $\theta$ up to permutation, let $\hat{\theta}^{\textrm{est}} \in [0,1]^{K \times K}$ denote the matrix of between-block densities induced by $\hat{z}$,
\begin{equation}\label{eq: hat theta est}
\hat{\theta}^{\textrm{est}}_{ab} = \frac{\sum_{i=1}^n \sum_{j=1}^n A_{ij} 1\{\hat{z}_i=a,\hat{z}_j=b\}}{\sum_{i=1}^n \sum_{j=1}^n 1\{\hat{z}_i=a,\hat{z}_j=b\}} \qquad a,b \in [K].
\end{equation}

\subsection{Discussion} \label{sec: related work}

\paragraph{Related Work} Semidefinite programs have been used for community detection in \cite{amini2014semidefinite, guedon2015community, montanari2015semidefinite}, as well as \cite{cai2015robust} which allowed for outliers, and \cite{chen2015convexified} which allowed for degree heterogeneity. In each of these works, the network is required to exhibit assortatitve block structure. For the general model \eqref{eq: bernoulli} without such restrictions, estimation has been considered in \cite{chatterjee2015matrix}, but only for dense settings. To the best of our knowledge, networks that are both non-assortative and sparse (i.e., bounded degree), such as those presented in Sections \ref{sec: overlapping} and \ref{sec: hoff}, have not been considered in previous work. 

Additionally, the semidefinite program presented here bears resemblance to one appearing in \cite{zhao1998semidefinite} for lower bounding the optimal objective value of the quadratic assignment problem (without finding a feasible solution)\footnote{which can be used as an intermediate step in a branch-and-bound algorithm.}, and also recent work on estimating pairwise alignments beween objects \cite{bandeira2015non}.

%
%

\section{Convergence Analysis} \label{sec: theorem}

In this section, we analyze the solution of the semidefinite program \eqref{eq: sdp} for both matrix denoising and label recovery. Analogous to existing results for community detection, our results will imply weak consistency (i.e., performance better than random guessing) in the regime where the average degree of $A$ is asymptotically bounded above some constant, and consistency of $\hat{z}$, as well as vanishing excess risk of $\hat{P}$, when the average degree $\rightarrow \infty$. 

The organization of this section is the following: Section \ref{sec: thm preliminaries} defines basic notation; Section \ref{sec: assumptions} states the required conditions; and Section \ref{sec: theorems} presents the convergence results, which are proven in Appendix \ref{sec: appendix A}.

\subsection{Preliminaries} \label{sec: thm preliminaries}

The following notation will be used. Given $A \in \{0,1\}^{n \times n}$ that is generated by \eqref{eq: bernoulli} under some $P \in [0,1]^{n \times n}$, let $\rho$ denote its expected density 
\[ \rho = \frac{1}{n(n-1)} \sum_{ij} P_{ij}. \]
Given $\hat{\theta} \in [0,1]^{K \times K}$, let $F \in \mathbb{R}^{nK \times nK}$ denote the objective function of the semidefinite program \eqref{eq: sdp}, with submatrices $F^{(ij)} \in \mathbb{R}^{K \times K}$ given by
\begin{equation}\label{eq: F theorem}
F^{(ij)}_{ab} = A_{ij} \log \hat{\theta}_{ab} + (1- A_{ij}) \log (1 - \hat{\theta}_{ab}) \qquad i,j \in [n],\, a,b \in [K].
\end{equation}
Let $\bar{F} \in \mathbb{R}^{nK \times nK}$ denote a idealized version of $F$ in which $A$ is replaced by its expectation $P$, with submatrices $\bar{F}^{(ij)} \in \mathbb{R}^{K \times K}$ given by
\begin{equation} \label{eq: Fbar theorem}
\bar{F}^{(ij)}_{ab} = P_{ij} \log \hat{\theta}_{ab} + (1- P_{ij}) \log (1-\hat{\theta}_{ab}) \qquad i,j \in [n],\, a,b \in [K].
\end{equation}
Let $\mathcal{X} \subset \mathbb{R}^{nK \times nK}$ denote the feasible region of the semidefinite program \eqref{eq: sdp}:
\begin{equation}\label{eq: Xscr}
\mathcal{X} = \left\{X \in \mathbb{R}^{nK \times nK}: X \succeq 0, X \geq 0, \textrm{ and } \sum_{a=1}^K 
\sum_{b=1}^K X^{(ij)}_{ab} = 1 \textrm{ for all } i,j \in [n]\right\}.
\end{equation}
Let $\hat{X}$ denote any solution to the semidefinite program \eqref{eq: sdp}, which can be written as 
\[ \textrm{maximize } \ip{F,X} \textrm{ over all } X \in \mathcal{X}.\]
For any matrix $B \in [0,1]^{K \times K}$, let the function $\mathcal{Q}_{B}$ be given by
\[ \mathcal{Q}_{B}(a,b) = \{(c,d) \in [K]^2: B_{cd} = B_{ab}\} \qquad \qquad a,b \in [K],\]
so that $\mathcal{Q}_B$ identifies the subsets of $[K]^2$ that have equal values under $B$.

\subsection{Assumptions}\label{sec: assumptions}

Assumption \ref{as: rho} and \ref{as: thetahat} will apply when $\tilde{P}$ is used to estimate the general model \eqref{eq: bernoulli}. Assumption \ref{as: rho} requires $P$ to have density $\rho$ exceeding $1/n$. Assumption \ref{as: thetahat} bounds the entries of $\hat{\theta}$ to differ from $\rho$ by roughly at most a constant factor.

\begin{assumption} \label{as: rho}
Let $A \in \{0,1\}^{n \times n}$ be generated by \eqref{eq: bernoulli}, with $P \in [0,1]^{n \times n}$ (which evolves with $n$) satisfying $\rho \geq 1/n$.
\end{assumption}

\begin{assumption} \label{as: thetahat}
The matrix $\hat{\theta} \in [0,1]^{K \times K}$ (which may evolve with $n$) satisfies $\frac{\rho}{c} \leq \frac{\hat{\theta}_{ab}}{1-\hat{\theta}_{ab}} \leq c\rho$ for some $c > 0$ and all $n$ and $a,b \in [K]$.
\end{assumption}

Assumptions \ref{as: sbm1} and \ref{as: sbm2} will apply when $A$ is generated by a stochastic blockmodel, and are sufficient to show that $\hat{z}$ converges to the true $z$, up to label switching. Assumption \ref{as: sbm1} describes a parametrization that is commonly used for sparse blockmodels. Assumption \ref{as: sbm2} places bounds on the misspecification between $\hat{\theta}$ and $\theta^*$ in the sparse blockmodel setting. 

\begin{assumption}\label{as: sbm1}
For all $n$, let $A \in \{0,1\}^{n \times n}$ and $z \in [K]^n$ be generated by a stochastic blockmodel with parameters $(\pi, \theta^*)$. Let $\pi$ be constant, and let $\theta^* = \alpha B^*$, where $\alpha \in \mathbb{R}$ satisfies $\alpha \rightarrow 0$ and $\alpha > 1/n$, and $B^* \in \mathbb{R}_+^{K \times K}$ is constant, rank $K$, and satisfies
\begin{equation} \label{eq: canonical sbm}
\sum_{a=1}^K \sum_{b=1}^K \pi_a \pi_b B^*_{ab} = 1.
\end{equation}
\end{assumption}

\begin{assumption}\label{as: sbm2}
Let $\hat{\theta} = \hat{\alpha} \hat{B}$, where $\hat{\alpha} = \frac{1}{n(n-1)}\sum_{ij} A_{ij}$, and $\hat{B} \in \mathbb{R}_+^{K\times K}$ is a fixed matrix such that $\hat{B}$ and $B^*$ satisfy
\begin{align} 
	\mathcal{Q}_{\hat{B}}(a,b) = \mathcal{Q}_{B^*}(a,b) \qquad & \qquad \forall a,b \in [K] \label{eq: sbm1} \\
	B_{ab}^* \log \frac{\hat{B}_{ab}}{\hat{B}_{cd}} - (\hat{B}_{ab} - \hat{B}_{cd}) > 0 \qquad & \qquad \forall a,b \in [K] \textrm{ and } c,d \notin \mathcal{Q}_{B^*}(a,b). \label{eq: sbm2}
\end{align}
\end{assumption}
Assumption \ref{as: sbm2} states that $\hat{B}$ and $B^*$ need not have identical values, but should have the same structure (as given by $\mathcal{Q}_{\hat{B}}$ and $\mathcal{Q}_{B^*}$). Additionally, for all $a,b \in [K]$, the entry $\hat{B}_{ab}$ should be the closest element of $\hat{B}$ to $B^*_{ab}$, in terms of the Bregman divergence associated with the Poisson likelihood.

\subsection{Results} \label{sec: theorems}

Theorem \ref{th: Ptilde} holds when $A$ is generated from the general model \eqref{eq: bernoulli}, including non-blockmodels. It gives an oracle inequality on the quality of the randomized estimate $\tilde{P}$ given by \eqref{eq: Phat randomized}, relative to the best blockmodel approximation to the generative $P$.

\begin{theorem}\label{th: Ptilde}
	Let Assumptions \ref{as: rho} and \ref{as: thetahat} hold. Let $\tilde{P}$ denote the randomized estimate of $P$ given by \eqref{eq: Phat randomized}. Then
	\[\frac{1}{n^2\rho} \sum_{i=1}^n \sum_{j=1}^n KL(P_{ij}, \tilde{P}_{ij}) \leq \frac{1}{n^2\rho} \min_{\hat{z} \in [K]^n} \sum_{i=1}^n \sum_{j=1}^n KL(P_{ij}, \hat{\theta}_{\hat{z}_i,\hat{z}_j}) + O_P\left(\frac{1}{\sqrt{n \rho}}\right),\]
with $O_P(\cdot)$ having constant terms depending only on $K$ and $c$ (which appears in Assumption \ref{as: thetahat}).
\end{theorem}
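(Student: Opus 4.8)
The plan is to turn the oracle inequality into a comparison of the semidefinite objective at $\hat X$ with its value at the oracle point, and then bound the deviation of $F$ from its population version $\bar F$ uniformly over $\mathcal X$. Recall $KL(p,q) = -H(p) - \bigl(p\log q + (1-p)\log(1-q)\bigr)$ with $H(p) = -p\log p - (1-p)\log(1-p)$. Let $\hat z^{\ast}$ attain the oracle minimum and let $X^{\ast}\in\mathcal X$ be the rank-one feasible point with blocks $X^{\ast(ij)} = e_{\hat z^{\ast}_i} e_{\hat z^{\ast}_j}^{T}$. Feeding this identity into the definition of $\bar F$ and using $\sum_{ab}X^{(ij)}_{ab}=1$ gives $\min_{\hat z}\sum_{ij}KL(P_{ij},\hat\theta_{\hat z_i\hat z_j}) = -\sum_{ij}H(P_{ij}) - \langle\bar F, X^{\ast}\rangle$ and $\sum_{ij}\sum_{ab}\hat X^{(ij)}_{ab}KL(P_{ij},\hat\theta_{ab}) = -\sum_{ij}H(P_{ij}) - \langle\bar F,\hat X\rangle$. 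Since $\hat X$ maximizes $\langle F,\cdot\rangle$ over $\mathcal X$ and $X^{\ast}\in\mathcal X$, writing $\langle\bar F,\hat X\rangle = \langle F,\hat X\rangle - \langle F-\bar F,\hat X\rangle \geq \langle F, X^{\ast}\rangle - \langle F-\bar F,\hat X\rangle$ yields $\langle\bar F,X^{\ast}\rangle - \langle\bar F,\hat X\rangle \leq 2\sup_{X\in\mathcal X}|\langle F-\bar F, X\rangle|$, hence $\sum_{ij}\sum_{ab}\hat X^{(ij)}_{ab}KL(P_{ij},\hat\theta_{ab}) \leq \min_{\hat z}\sum_{ij}KL(P_{ij},\hat\theta_{\hat z_i\hat z_j}) + 2\sup_{X\in\mathcal X}|\langle F-\bar F, X\rangle|$. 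The left-hand side is the expectation of $\sum_{ij}KL(P_{ij},\tilde P_{ij})$ conditional on $A$; conditionally the dyads are independent with $KL$ terms of size $O(\log(1/\rho))$ (by Assumption \ref{as: thetahat}) and conditional variance at most that bound times the mean, so a Bernstein inequality shows $\sum_{ij}KL(P_{ij},\tilde P_{ij})$ exceeds its conditional mean only by fluctuations that are absorbed into the $O_P(1/\sqrt{n\rho})$ remainder.

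It remains to show $\sup_{X\in\mathcal X}|\langle F-\bar F, X\rangle| = O_P(n^{3/2}\sqrt\rho)$ with constants depending only on $K$ and $c$. Set $\ell_{ab} = \log\frac{\hat\theta_{ab}}{1-\hat\theta_{ab}}$ and $\Lambda_{ab} = \ell_{ab} - \log\rho$, so $|\Lambda_{ab}|\leq\log c$ by Assumption \ref{as: thetahat}. Since $(F-\bar F)^{(ij)}_{ab} = (A_{ij}-P_{ij})\ell_{ab}$ and $\sum_{ab}X^{(ij)}_{ab}=1$, one gets $\langle F-\bar F, X\rangle = \log\rho\sum_{i\neq j}(A_{ij}-P_{ij}) + \langle G, X\rangle$ with $G^{(ij)}_{ab} = (A_{ij}-P_{ij})\Lambda_{ab}$. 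The first term does not depend on $X$, and because $\sum_{i<j}(A_{ij}-P_{ij})$ has variance at most $n^{2}\rho$ it is $O_P(\log(1/\rho)\,n\sqrt\rho) = o_P(n^{3/2}\sqrt\rho)$, using $\rho\geq 1/n$ so that $\log(1/\rho)=o(\sqrt n)$. For the second term, write $\langle G, X\rangle = \sum_{ij}(A_{ij}-P_{ij})Y_{ij}$ with $Y_{ij} = \langle\Lambda, X^{(ij)}\rangle$, and split $\Lambda = \Lambda_{+} - \Lambda_{-}$ into its positive and negative semidefinite parts. The matrices $Y_{\pm}$ with entries $(Y_{\pm})_{ij} = \langle\Lambda_{\pm}, X^{(ij)}\rangle$ are positive semidefinite, since $u^{T}Y_{\pm}u = \langle\Lambda_{\pm}, (u\otimes I_K)^{T}X(u\otimes I_K)\rangle \geq 0$ for $u\in\mathbb{R}^{n}$; and their diagonals satisfy $(Y_{\pm})_{ii} = \langle\Lambda_{\pm}, X^{(ii)}\rangle \leq \|\Lambda\|_{\op}\operatorname{tr}X^{(ii)} \leq \|\Lambda\|_{\op} \leq K\log c$, using $\operatorname{tr}X^{(ii)} \leq \sum_{ab}X^{(ii)}_{ab} = 1$.

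Grothendieck's inequality then bounds $|\langle A-P, Y_{\pm}\rangle|$ by an absolute constant times $\|\Lambda\|_{\op}$ times the cut norm $\|A-P\|_{\infty\to 1} = \max_{s,t\in\{\pm 1\}^{n}}s^{T}(A-P)t$ (the diagonal of $A-P$ vanishing lets one raise the diagonal of $Y_{\pm}$ to a constant at no cost). For fixed $s,t$ the quantity $s^{T}(A-P)t$ is a sum of $\binom{n}{2}$ independent mean-zero terms of magnitude at most $2$ with total variance at most $2n^{2}\rho$, so Bernstein's inequality together with a union bound over the $4^{n}$ sign patterns gives $\|A-P\|_{\infty\to 1} = O_P(n^{3/2}\sqrt\rho)$; here Assumption \ref{as: rho} ($\rho\geq 1/n$) ensures the variance term dominates the linear term and produces this rate. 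Combining the pieces, $\sup_{X\in\mathcal X}|\langle F-\bar F, X\rangle| = O_P(n^{3/2}\sqrt\rho)$, and dividing the inequality of the first paragraph through by $n^{2}\rho$ gives the claimed $O_P(1/\sqrt{n\rho})$ remainder with constants depending only on $K$ and $c$.

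The step I expect to be the main obstacle is the uniform control of $\sup_{X\in\mathcal X}|\langle F-\bar F, X\rangle|$. A crude bound via the operator norm would require $\|A-P\|_{\op} = O_P(\sqrt{n\rho})$, which is false in the bounded-degree regime, where a handful of high-degree vertices inflate the spectral norm; it is the nonnegativity and block-sum constraints defining $\mathcal X$, together with the passage to the cut norm via Grothendieck's inequality (as in the community-detection SDP literature), that rescue the argument. The new ingredient relative to that literature is the $K\times K$ block structure: one must replace $X$ by its $\Lambda$-weighted partial trace $Y$ and check that the positive and negative parts of $\Lambda$ yield positive semidefinite compressions with bounded diagonal before Grothendieck applies. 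Tracking the $\log(1/\rho)$ factors that appear along the way — all of which turn out to be of lower order once $\rho\geq 1/n$ — is routine but must be done with some care.
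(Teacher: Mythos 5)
Your proposal is correct, and its overall architecture coincides with the paper's: the same identity $KL(P_{ij},\hat\theta_{ab})=\pm H(P_{ij})-\bar F^{(ij)}_{ab}$ converts the oracle risk into $\langle\bar F,\cdot\rangle$ evaluated at a rank-one feasible point; the optimality of $\hat X$ for $\langle F,\cdot\rangle$ reduces everything to $\sup_{X\in\mathcal X}|\langle F-\bar F,X\rangle|$ (the paper's Lemma 1); the $\log\rho$ re-centering (your $\Lambda_{ab}=\ell_{ab}-\log\rho$ is exactly the paper's $F-\tilde F$) makes the coefficients $O(\log c)$; and the fluctuation of the randomized $\tilde P$ about its conditional mean is handled by a conditional Bernstein bound exactly as in the paper's Lemma 3. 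The one place you genuinely diverge is the execution of the Grothendieck step. The paper applies Grothendieck directly to the $nK\times nK$ matrix $F-\tilde F$, using $\mathcal X\subset\mathcal M$ (feasible $X$ are PSD with diagonal entries at most $1$), and then runs Bernstein with a union bound over the $2^{nK}\times 2^{nK}$ sign vectors $s,t\in\{-1,1\}^{nK}$. You instead compress first: writing $\langle G,X\rangle=\sum_{ij}(A_{ij}-P_{ij})\langle\Lambda,X^{(ij)}\rangle$, splitting $\Lambda$ into PSD parts, and checking that the weighted partial traces $Y_{\pm}$ are $n\times n$ PSD matrices with diagonal $O(K\log c)$, so that Grothendieck applies at the $n\times n$ level against the cut norm $\|A-P\|_{\infty\to 1}$, with a union bound over only $4^{n}$ sign patterns. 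Both yield the same rate with constants depending only on $K$ and $c$; your version needs the extra (correct) verification that the compressions $Y_{\pm}$ are PSD with bounded diagonal, and in exchange shaves the factor of $K$ from the entropy term in the union bound and isolates the dependence on the block structure into the single quantity $\|\Lambda\|_{\textrm{op}}$. Your informal variance accounting for the conditional Bernstein step (range $O(\log(1/\rho))$) is looser than the paper's (which re-centers by $P_{ij}\log\rho$ to get an $O(1)$ range), but since $\rho\ge 1/n$ forces $\log(1/\rho)=o(\sqrt n)$ the extra logarithm is harmless, as you note.
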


Theorem \ref{th: sbm} assumes the sparse blockmodel setting of Assumptions \ref{as: sbm1} and \ref{as: sbm2}, and shows that both $\hat{P}$ and the randomized estimate $\tilde{P}$ asymptotically recover $P$, with vanishing fraction of incorrect values.

\begin{theorem} \label{th: sbm}
Let Assumptions \ref{as: sbm1} and \ref{as: sbm2} hold. Let $\hat{P}$ denote the estimate of $P$ given by \eqref{eq: Phat MAP}. Then 
\begin{equation}\label{eq: th sbm 1}
\frac{1}{n^2} \sum_{i=1}^n \sum_{j=1}^n 1\{\hat{P}_{ij} \neq \hat{\theta}_{z_i z_j}\} = O_P\left(\frac{1}{\sqrt{n \alpha}}\right),
\end{equation}
with the same result if $\hat{P}$ is replaced by $\tilde{P}$ given by \eqref{eq: Phat randomized}.
\end{theorem}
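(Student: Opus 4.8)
I would work directly from the optimality of $\hat X$ for \eqref{eq: sdp} together with Assumptions~\ref{as: sbm1}--\ref{as: sbm2}, rather than from the statement of Theorem~\ref{th: Ptilde}: since $\hat B$ is only required to share the \emph{structure} of $B^*$, not to equal it, the oracle term in Theorem~\ref{th: Ptilde} carries a misspecification bias of order $\rho$ and by itself yields only the trivial bound $O_P(1)$ for \eqref{eq: th sbm 1}. Let $X^\star\in\mathcal X$ be the feasible rank-one point with $X^{\star(ij)}=e_{z_i}e_{z_j}^T$, and for $i,j\in[n]$ set
\[ \mu_{ij}:=\sum_{(a,b)\notin\mathcal Q_{B^*}(z_i,z_j)}\hat X^{(ij)}_{ab}=1-\sum_{(a,b)\in\mathcal Q_{B^*}(z_i,z_j)}\hat X^{(ij)}_{ab}, \qquad M:=\sum_{i,j}\mu_{ij}, \]
so that, using \eqref{eq: sbm1} to identify $\{(a,b):\hat\theta_{ab}=\hat\theta_{z_iz_j}\}$ with $\mathcal Q_{B^*}(z_i,z_j)$, the distribution $\hat X^{(ij)}$ on $[K]^2$ puts mass $1-\mu_{ij}$ on the value $\hat\theta_{z_iz_j}$ and at most $\mu_{ij}$ on any other value. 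The MAP rule \eqref{eq: Phat MAP} therefore returns $\hat\theta_{z_iz_j}$ whenever $\mu_{ij}<1/2$, giving $\sum_{i,j}1\{\hat P_{ij}\neq\hat\theta_{z_iz_j}\}\le 2M$; and since, conditionally on $\hat X$, the events $\{\tilde P_{ij}\neq\hat\theta_{z_iz_j}\}$ are independent over $i<j$ with probability $\mu_{ij}$, also $\sum_{i,j}1\{\tilde P_{ij}\neq\hat\theta_{z_iz_j}\}=M+O_P(\sqrt M)$. Hence it suffices to prove $M=O_P(n^{3/2}/\sqrt\alpha)$ (the diagonal terms $i=j$ contribute $O(n)$, which is lower order).

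\textbf{Population gap.} A short computation gives $\bar F^{(ij)}_{z_iz_j}-\bar F^{(ij)}_{ab}=KL(P_{ij},\hat\theta_{ab})-KL(P_{ij},\hat\theta_{z_iz_j})$ (the entropy terms cancel). Writing $\hat\alpha=\alpha(1+\eta)$, Bernstein's inequality for $\hat\alpha=\frac1{n(n-1)}\sum_{ij}A_{ij}$ and the law of large numbers for the class counts give $\eta=O_P(n^{-1/2})$ and $\rho=\alpha(1+O_P(n^{-1/2}))$ (so Assumption~\ref{as: thetahat} holds with high probability). As $P_{ij}=\alpha B^*_{z_iz_j}$ and $\hat\theta_{cd}=\alpha(1+\eta)\hat B_{cd}$ are all $O(\alpha)$, a leading-order expansion of the Bernoulli $KL$ gives, uniformly over the finitely many configurations of $(z_i,z_j,a,b)$,
\[ KL(P_{ij},\hat\theta_{ab})-KL(P_{ij},\hat\theta_{z_iz_j})=\alpha\Big[d\big(B^*_{z_iz_j},\hat B_{ab}\big)-d\big(B^*_{z_iz_j},\hat B_{z_iz_j}\big)\Big]+O_P\!\big(\alpha\eta+\alpha^2\big), \]
where $d(p,q)=p\log(p/q)-p+q$ is the Poisson Bregman divergence; crucially both the misspecification term $d(B^*_{z_iz_j},\hat B_{z_iz_j})$ and the $B^*_{z_iz_j}\log(1+\eta)$ term cancel in the difference -- which is why \eqref{eq: sbm2} is phrased as a difference. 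The bracket equals $0$ when $(a,b)\in\mathcal Q_{B^*}(z_i,z_j)$ by \eqref{eq: sbm1}, and equals the left-hand side of \eqref{eq: sbm2} otherwise, hence is at least $\delta:=\min\{d(B^*_{ab},\hat B_{cd})-d(B^*_{ab},\hat B_{ab}):a,b\in[K],\,(c,d)\notin\mathcal Q_{B^*}(a,b)\}>0$. So on an event of probability $1-o(1)$, for $n$ large and $i\neq j$,
\[ \bar F^{(ij)}_{z_iz_j}-\bar F^{(ij)}_{ab}\ \ge\ \tfrac12\alpha\delta\cdot 1\{(a,b)\notin\mathcal Q_{B^*}(z_i,z_j)\}. \]

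\textbf{Optimality plus concentration.} Feasibility of $X^\star$ gives $\ip{F,\hat X}\ge\ip{F,X^\star}$, hence $\ip{\bar F,X^\star-\hat X}\le\ip{F-\bar F,\hat X-X^\star}$. By the population gap the left side is at least $\tfrac12\alpha\delta M-O(n\alpha)$ (the $O(n\alpha)$ coming from the diagonal blocks). On the right, $(F-\bar F)^{(ij)}_{ab}=(A_{ij}-P_{ij})\log\frac{\hat\theta_{ab}}{1-\hat\theta_{ab}}$, and since $\sum_{ab}\hat X^{(ij)}_{ab}=\sum_{ab}X^{\star(ij)}_{ab}=1$ the additive constant $\log\alpha$ cancels, leaving $\ip{F-\bar F,\hat X-X^\star}=\ip{W,\hat X-X^\star}$ with $W^{(ij)}_{ab}:=(A_{ij}-P_{ij})\,v_{ab}$ and $v_{ab}:=\log\frac{\hat\theta_{ab}}{1-\hat\theta_{ab}}-\log\alpha$ uniformly bounded with high probability. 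Thus the right side is at most $2\sup_{X\in\mathcal X}|\ip{W,X}|$, and the Grothendieck-type estimate underlying Theorem~\ref{th: Ptilde} -- which exploits $X\succeq 0$, $X\ge 0$ and the block-sum constraints together with concentration of the sparse matrix $A-P$ in a cut-type norm -- gives $\sup_{X\in\mathcal X}|\ip{W,X}|=O_P(n\sqrt{n\rho})$. Combining, $\tfrac12\alpha\delta M\le O_P(n\sqrt{n\rho})+O(n\alpha)=O_P(n\sqrt{n\alpha})$, so $M=O_P(n^{3/2}/\sqrt\alpha)$; dividing by $n^2$ and invoking the first paragraph yields \eqref{eq: th sbm 1} for both $\hat P$ and $\tilde P$.

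\textbf{Main obstacle.} The crux is the uniform noise bound $\sup_{X\in\mathcal X}|\ip{W,X}|=O_P(n\sqrt{n\rho})$ in the bounded-degree regime $\rho\gtrsim 1/n$, where $\|A-P\|_\op$ fails to concentrate and one must instead invoke the Grothendieck/cut-norm machinery already used for Theorem~\ref{th: Ptilde}; this estimate is also what produces the $\sqrt{n\rho}$ and hence the final $1/\sqrt{n\alpha}$ rate. A secondary point requiring care is making the leading-order $KL$ expansion uniform with an $o(\alpha)$ remainder, so that the population gap $\tfrac12\alpha\delta$ genuinely dominates the fluctuation of $\hat\alpha$.
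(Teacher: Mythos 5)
Your proposal is correct and follows essentially the same route as the paper: the paper's proof establishes the population gap $\bar F^{(ij)}_{z_iz_j}-\bar F^{(ij)}_{cd}\ge\rho\Delta'+o_P(\alpha)$ by the same expansion, then invokes Lemma~\ref{le: recovery} (optimality of $\hat X$ plus the re-centered Grothendieck/Bernstein bound of Lemma~\ref{le: main}) to control the off-class mass $\mathcal{E}_{ij}$ (your $\mu_{ij}$), and finishes with the identical $1\{\mathcal E_{ij}\ge 1/2\}\le 2\mathcal E_{ij}$ argument for $\hat P$ and a conditional Chebyshev argument for $\tilde P$. You have merely inlined the two lemmas rather than citing them; your observation that Theorem~\ref{th: Ptilde} alone is insufficient because of the $O(\rho)$ misspecification bias is accurate and is precisely why the paper routes through Lemma~\ref{le: recovery}.
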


%

Corollary \ref{cor: eigvec} follows from Theorem \ref{th: sbm}, and states that the eigencoordinates from $\hat{P}$, which are used to compute $\hat{z}$, converge (up to a unitary transformation) to those of $P$. As $P$ is block structured, this suggests that $\hat{z}$ will converge to $z$ up to a permutation of the labels. It is proven in Appendix \ref{sec: appendix C}, and is a direct application of the Davis-Kahan theorem in \cite[Th. 4]{yu2015useful}.

\begin{corollary} \label{cor: eigvec}
Let Assumptions \ref{as: sbm1} and \ref{as: sbm2} hold, and let $V=(v_1,\ldots,v_K)$ denote the eigenvectors of $P$. Let $\hat{V} = (\hat{v}_1,\ldots,\hat{v}_K)$ denote the eigenvectors of the $K$ largest eigenvalues (in absolute value) of $\hat{P}$ or $\tilde{P}$. Let $\mathcal{O}$ denote the set of $K \times K$ orthogonal matrices. It holds that
\[  \min_{O \in \mathcal{O}} \|\hat{V}O - V\|_F^2 \leq O_P\left(\frac{1}{\sqrt{n \alpha}}\right).\]
\end{corollary}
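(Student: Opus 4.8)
The plan is to apply the Davis--Kahan bound of \cite[Th.~4]{yu2015useful} twice and chain the two estimates through a common target subspace, rather than comparing the eigenvectors of $\hat{P}$ (or $\tilde{P}$) to those of $P$ directly: a direct comparison is hopeless, since under Assumptions \ref{as: sbm1}--\ref{as: sbm2} the entries of $\hat{P}$ take values near $\hat{\theta}_{ab}=\hat{\alpha}\hat{B}_{ab}$ while those of $P$ equal $\alpha B^*_{z_iz_j}$, so $\|\hat{P}-P\|_F$ is of the same order $n\alpha$ as the relevant eigengap and Davis--Kahan would give only an $O(1)$ bound. Instead, write $Z\in\{0,1\}^{n\times K}$ for the membership matrix of $z$ (so $Z_{ia}=1\{z_i=a\}$), let $\Theta=Z\hat{\theta}Z^T$ be the block-constant matrix with entries $\Theta_{ij}=\hat{\theta}_{z_iz_j}$, and observe that both $\Theta$ and the leading block-structured part of $P$, namely $Z(\alpha B^*)Z^T$, have column space exactly the $K$-dimensional subspace $\mathrm{col}(Z)$ (here one uses that $\hat{B}$, like $B^*$, has rank $K$; this should be read into Assumption \ref{as: sbm2}). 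Consequently their top-$K$ eigenvector matrices agree up to a $K\times K$ orthogonal rotation, so it suffices to show that the eigenvectors of $\hat{P}$ and of $P$ each lie within $O_P\big((n\alpha)^{-1/4}\big)$ of an orthonormal basis $W$ of $\mathrm{col}(Z)$, measured in the rotation-invariant distance $\mathrm{dist}(U_1,U_2)=\min_{O\in\mathcal{O}}\|U_1O-U_2\|_F$, and then to invoke the triangle inequality for $\mathrm{dist}$ and square.

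For the $\hat{P}$ (resp.\ $\tilde{P}$) side I would proceed as follows. By \eqref{eq: Phat MAP} and \eqref{eq: Phat randomized}, every entry of $\hat{P}$ and of $\tilde{P}$ equals some $\hat{\theta}_{ab}$, which is $\Theta(\alpha)$ uniformly in $a,b$ because $\hat{B}$ is a fixed positive matrix and $\hat{\alpha}=\frac{1}{n(n-1)}\sum_{ij}A_{ij}$ concentrates around $\rho=\alpha(1+o_P(1))$ (Bernstein's inequality, using $\alpha>1/n$); the same holds for the entries of $\Theta$. Theorem \ref{th: sbm} bounds the number of coordinates on which $\hat{P}$ and $\Theta$ differ by $O_P\big(n^2/\sqrt{n\alpha}\big)$, and combining these two facts gives $\|\hat{P}-\Theta\|_F^2=O_P\big(\alpha^2 n^2/\sqrt{n\alpha}\big)$ --- it is this $O(\alpha^2)$ contribution per disagreeing entry, rather than the trivial $O(1)$, that makes the final rate come out. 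Since the $K$ nonzero eigenvalues of $\Theta=\hat{\alpha}Z\hat{B}Z^T$ have magnitude $\asymp\hat{\alpha}n\asymp\alpha n$ (the class sizes being $\asymp n$ and $\hat{B}$ a fixed rank-$K$ matrix) while the remaining $n-K$ vanish, the relevant eigengap is $\asymp n\alpha$, and $\|\hat{P}-\Theta\|_{\op}\le\|\hat{P}-\Theta\|_F=o_P(n\alpha)$, so \cite[Th.~4]{yu2015useful} applies and yields $\mathrm{dist}(\hat{V},W)\le C\|\hat{P}-\Theta\|_F/(n\alpha)=O_P\big((n\alpha)^{-1/4}\big)$, with the identical bound when $\hat{V}$ is replaced by the top-$K$ eigenvectors of $\tilde{P}$.

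The $P$ side is easier: write $P=Z(\alpha B^*)Z^T-D$, where $D$ is the diagonal matrix with entries $\alpha B^*_{z_iz_i}$, so $\|D\|_{\op}=O(\alpha)$. Since $Z(\alpha B^*)Z^T$ again has $K$ nonzero eigenvalues of magnitude $\asymp n\alpha$ (using that $B^*$ has rank $K$ and $\pi$ is constant) well separated from its vanishing ones, and its top-$K$ eigenspace is $\mathrm{col}(W)$, Davis--Kahan gives $\mathrm{dist}(V,W)\le C'\sqrt{K}\,\|D\|_{\op}/(n\alpha)=O(1/n)=o\big((n\alpha)^{-1/4}\big)$. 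The triangle inequality for $\mathrm{dist}$ then gives $\min_{O\in\mathcal{O}}\|\hat{V}O-V\|_F\le\mathrm{dist}(\hat{V},W)+\mathrm{dist}(W,V)=O_P\big((n\alpha)^{-1/4}\big)$, and squaring is the stated bound; the argument for $\tilde{P}$ is verbatim the same, using the $\tilde{P}$-version of Theorem \ref{th: sbm}.

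The main obstacle is conceptual rather than computational: one must resist comparing $\hat{P}$ to $P$ directly and instead route both through the exact block structure $\mathrm{col}(Z)$, which in turn forces one to know that $\hat{B}$ (hence $Z\hat{B}Z^T$) has full rank $K$ --- otherwise the top-$K$ eigenspace of $\Theta$ picks up directions orthogonal to $\mathrm{col}(Z)$ and the left-hand side need not be small at all. The only genuinely quantitative point is the one already flagged: bounding the $O_P\big(n^2/\sqrt{n\alpha}\big)$ mismatched entries of $\hat{P}-\Theta$ by $O(\alpha)$ apiece; using the crude bound $1$ per entry would leave Davis--Kahan delivering only $O_P\big(\alpha^{-2}(n\alpha)^{-1/2}\big)$ after squaring, which does not vanish when $\alpha\to0$. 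Everything else --- the concentration of $\hat{\alpha}$, the $n_a\asymp n$ estimate on class sizes, and the explicit Davis--Kahan constants (which depend only on $K$) --- is routine.
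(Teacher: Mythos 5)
Your proof is correct, and it takes a genuinely different route from the paper's. The paper applies the Davis--Kahan bound of \cite[Th.~4]{yu2015useful} exactly once, directly between $P$ and $\hat P$: it bounds $\|\hat P-P\|_F^2$ by the mismatch count from Theorem \ref{th: sbm} times $O(\alpha^2)$ per entry and divides by the eigengap $\lambda_K^2\asymp(n\alpha)^2$ supplied by Lemma \ref{le: eigenval}. You instead chain two Davis--Kahan applications through a common orthonormal basis of $\operatorname{col}(Z)$, comparing $\hat V$ to the top-$K$ eigenvectors of $Z\hat\theta Z^T$ and $V$ to those of $Z(\alpha B^*)Z^T$, and close with the triangle inequality for the rotation-invariant distance. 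The quantitative engine is the same in both arguments (the mismatch count, the $O(\alpha^2)$ per-entry bound, the $(n\alpha)^2$ gap), so the rates agree. What your detour buys is robustness to $\hat B\ne B^*$: Theorem \ref{th: sbm} counts entries with $\hat P_{ij}\ne\hat\theta_{z_i z_j}$, and the paper's one-shot argument uses this count as though it bounded $\sum_{ij}1\{\hat P_{ij}\ne P_{ij}\}$, which is only legitimate when $\hat\theta$ coincides with $\theta^*$; otherwise a constant fraction of the entries of $\hat P-P$ have size of order $\alpha$, so $\|\hat P-P\|_F\asymp n\alpha$ and the direct bound degenerates to $O(1)$, exactly as you observe at the outset. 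The price of your route is the additional hypothesis that $\hat B$ has rank $K$ (so that $Z\hat\theta Z^T$ has column space exactly $\operatorname{col}(Z)$ and an $\asymp n\alpha$ spectral gap), which you correctly flag as not literally contained in Assumption \ref{as: sbm2}; some such condition is in any case needed for the statement to hold when $\hat B\ne B^*$, since otherwise the top-$K$ eigenspace of $\hat P$ picks up directions untethered from $\operatorname{col}(Z)$. In short: your argument is sound, slightly longer, and patches an implicit identification in the paper's proof rather than contradicting it.
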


\paragraph{Remark} Weak consistency in the bounded degree regime is implied by each of the results, in that the estimation error is bounded away from the performance of random guessing, provided that $\lim_{n\rightarrow \infty} \rho \geq c_0/n$ (for Theorem \ref{th: Ptilde}) or $\lim_{n\rightarrow \infty} \alpha \geq c_0/n$ (for Theorem \ref{th: sbm}), for some constant $c_0$. 

\section{Numerical optimization} \label{sec: admm}

The semidefinite program \eqref{eq: sdp} can be solved by the alternating direction method of multipliers (ADMM) \cite{boyd2011distributed}. While this semidefinite program is much larger than those previously introduced for community detection, speedups can often be achieved by exploiting problem structure, resulting in competitive computation times. 

To solve \eqref{eq: sdp} using ADMM, we introduce the decision variable $X \in \mathbb{R}^{nK \times nK}$ and auxiliary variables $Y,W,U,V \in \mathbb{R}^{nK \times nK}$, which are all initialized to zero and evolve according to the following rule:
\begin{align}
   			X^{t+1} &= \Pi_\mathcal{X}\left(\frac{1}{2}\left(W^t - U^t + Y^t - V^t + \frac{1}{\rho}F\right)\right) \label{eq: admm}\\
\nonumber	W^{t+1} &= \max(0, X^{t+1} + U^t) \\
\nonumber	Y^{t+1} &= \Pi_{\mathbb{S}_+}(X^{t+1} + V^t) \\
\nonumber	U^{t+1} &= U^t + X^{t+1} - W^{t+1} \\
\nonumber	V^{t+1} &= V^t + X^{t+1} - Y^{t+1},
\end{align}
where $F \in \mathbb{R}^{nK \times nK}$ is given by \eqref{eq: F}, $\rho \in \mathbb{R}$ is a positive stepsize parameter, the operator $\Pi_{\mathbb{S}_+}$ denotes projection onto the set of positive semidefinite matrices, and the operator $\Pi_\mathcal{X}$ denotes projection onto the affine subspace of all matrices $X \in \mathbb{R}^{nK \times nK}$ satisfying the linear constraints
\begin{align*}
		  \sum_{a=1}^K \sum_{b=1}^K X^{(ij)}_{ab} = 1 & \qquad \forall i,j \in [n]. 
\end{align*}

The slowest step of the ADMM iterations is the computation of $Y^{t+1} = \Pi_{\mathbb{S}_+}(X^{t+1} + V^t)$, which requires the eigendecomposition of an $nK \times nK$ matrix. In comparison, semidefinite programs for community detection require only the decomposition of an $n \times n$ matrix in each ADMM iteration. However, in many settings of interest, $X^{t+1} + V^t$ will be highly structured and have fast eigendecomposition methods. 

In particular, let the $K \times K$ submatrices $[X^{t+1} + V^t]^{(ij)}$ for $i,j \in [n]$ be symmetric and share a common set of orthonormal eigenvectors $v_1,\ldots,v_K$. It then holds for some $\ell \leq K$ and some partition $S_0,\ldots,S_\ell$ of $[K]$ that
\begin{equation} \label{eq: structured Y}
	[X^{t+1} + V^t]^{(ij)} = \sum_{l=0}^\ell \lambda_{ijl} \sum_{j \in S_l} v_j v_j^T \qquad \qquad \forall\, i,j \in [n],
\end{equation}
where $\lambda_{ijl}$ is the eigenvalue corresponding to the eigenvectors $\{v_j: j \in S_l\}$. In this case, $Y^{t+1}$ can be computed in the following manner:
\begin{enumerate}
	\item Find the eigendecomposition for each submatrix $[X^{t+1} + V^t]^{(ij)} \in \mathbb{R}^{K \times K}$, yielding eigenvectors $v_1,\ldots,v_K$, sets $S_0,\ldots,S_\ell$ partitioning $[K]$, and eigenvalues $\{\lambda_{ijl}\}$ satisfying \eqref{eq: structured Y}. 
	\item Let $\Lambda_0,\ldots,\Lambda_\ell \in \mathbb{R}^{n \times n}$ and $E_0,\ldots,E_\ell \in \mathbb{R}^{K \times K}$ be given by
\begin{equation*} 
\Lambda_l(i,j) = \lambda_{ijl} \qquad \textrm{ and } \qquad E_l = \sum_{j \in S_l} v_j v_j^T \qquad \qquad l=0,\ldots,\ell,
\end{equation*}
so that
\begin{equation*}
	X^{t+1} + V^t = \sum_{l=0}^\ell \Lambda_l \otimes E_l.
\end{equation*}
    \item Return
\begin{equation}\label{eq: Pi_S}
\Pi_{\mathcal{S}_+}(X^{t+1} + V^t) = \sum_{l=0}^\ell \Pi_{\mathcal{S}_+}(\Lambda_l) \otimes E_l,
\end{equation}
which holds because the matrices $E_0,\ldots,E_\ell$ are orthogonal and positive semidefinite.
\end{enumerate}	
Since $\Lambda_0,\ldots,\Lambda_\ell \in \mathbb{R}^{n \times n}$, the above steps require $\ell+1$ eigendecompositions of $n \times n$ matrices. As the computation time of eigendecomposition scales cubically in the matrix size, the resulting speedup can be quite significant in practice; for example, if $K=10$ and $\ell = \log_2 K$, the speedup is a factor of roughly $200$ (e.g., going from 3 hours to $\leq 1$ minute to solve the semidefinite program).

When will the decomposition \eqref{eq: structured Y} hold? A sufficient condition is that $\hat{\theta}$ is in the span of an {\it association scheme}, which is evidently a fundamental concept in combinatorics \cite{goemans1999semidefinite, graham1995handbook}, and is defined as follows:
\begin{definition}\cite[Sec 2]{goemans1999semidefinite}\label{def: association scheme}
A set of binary-valued and symmetric matrices $B_0,\ldots,B_\ell$ form an association scheme if the following properties hold:
\begin{enumerate}
	\item $B_0 = I$
	\item $\sum_{i=0}^\ell B_i = \obf \obf^T$
	\item For all $i,j$, it holds that $B_iB_j \in \operatorname{span}(B_0,\ldots,B_\ell)$
\end{enumerate}
\end{definition}

The key property of association schemes is the following result, which is Theorem 2.6 in \cite{bailey2004association}, or follows from E1-E4 in \cite{goemans1999semidefinite}. The result states that $B_0,\ldots,B_\ell$ share a common set of eigenvectors.

\begin{lemma}[\cite{goemans1999semidefinite, bailey2004association}] \label{le: association}
Let $B_0,\ldots,B_\ell \in \mathbb{R}^{K \times K}$ form an association scheme. Then $B_0,\ldots,B_\ell$ have the same eigenvectors $v_1,\ldots,v_K$, and for some partition $S_0,\ldots,S_\ell$ of $[K]$ and scalars $\{\lambda_j^{(i)}\}_{i,j=0}^\ell$ it holds for that
		\begin{equation} \label{eq: association}
		B_i = \sum_{j=0}^\ell \lambda_j^{(i)} \sum_{k \in S_j} v_k v_k^T, \qquad \qquad  i=0,\ldots,\ell.
		\end{equation}
Additionally, the vector $1$ will be one of the eigenvectors.
\end{lemma}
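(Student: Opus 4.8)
The plan is to show that $\mathcal{A} := \operatorname{span}(B_0,\ldots,B_\ell)$ is a commutative algebra of real symmetric matrices of dimension exactly $\ell+1$, and then to read off the conclusion from the spectral theorem for commuting symmetric matrices together with the structure of such an algebra. First I would observe that $\mathcal{A}$ is closed under matrix multiplication: by property 3 of Definition~\ref{def: association scheme}, $B_iB_j\in\mathcal{A}$ for all $i,j$, and bilinearity extends this to arbitrary products of elements of $\mathcal{A}$. Since each $B_i$ is symmetric, every element of $\mathcal{A}$ is symmetric; hence for $M,N\in\mathcal{A}$ the product $MN$ is symmetric, so $MN=(MN)^T=N^TM^T=NM$, and $\mathcal{A}$ is commutative. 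It contains $I=B_0$. Moreover $\dim\mathcal{A}=\ell+1$: the matrices $B_i$ are nonzero $0/1$ matrices whose supports partition $[K]\times[K]$ (because they are $0/1$ and sum to $\obf\obf^T$ by property 2), so they are linearly independent.

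Next I would apply the spectral theorem: a commuting family of real symmetric matrices is simultaneously orthogonally diagonalizable, so there is an orthonormal basis $v_1,\ldots,v_K$ of $\mathbb{R}^K$ and scalars $\mu_{ik}$ with $B_iv_k=\mu_{ik}v_k$. Define an equivalence relation on $[K]$ by $k\sim k'$ iff $\mu_{ik}=\mu_{ik'}$ for all $i$, let $S_0,\ldots,S_{m-1}$ be its classes, and let $P_j=\sum_{k\in S_j}v_kv_k^T$ be the orthogonal projection onto the corresponding joint eigenspace. Writing each $B_i=\sum_j\lambda^{(i)}_jP_j$, with $\lambda^{(i)}_j$ the common eigenvalue of $B_i$ on $S_j$, already gives an expression of the form \eqref{eq: association} once we establish that $m=\ell+1$.

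The crux — and the step I expect to require the most care — is the identity $m=\ell+1$. In the basis $(v_k)$, every element of $\mathcal{A}$ is diagonal and constant on each class $S_j$, so $\mathcal{A}$ embeds into the $m$-dimensional space $\mathcal{D}$ of such class-constant diagonal matrices; this yields $\ell+1=\dim\mathcal{A}\le m$. For the reverse inequality I would show that each $P_j\in\mathcal{A}$, whence $\mathcal{D}=\operatorname{span}(P_0,\ldots,P_{m-1})\subseteq\mathcal{A}$ and $m\le\ell+1$. To see $P_j\in\mathcal{A}$, choose coefficients $t_0,\ldots,t_\ell$ so that the values $c_j:=\sum_i t_i\mu_{ij}$ are pairwise distinct over $j=0,\ldots,m-1$ (possible because, for $j\ne j'$, the classes differ, so $t\mapsto\sum_i t_i(\mu_{ij}-\mu_{ij'})$ is a nonzero linear functional, and a generic $t$ avoids the finitely many associated hyperplanes); then $C:=\sum_i t_iB_i\in\mathcal{A}$ has eigenvalue $c_j$ on $S_j$, and Lagrange interpolation gives $P_j=\prod_{j'\ne j}(C-c_{j'}I)/(c_j-c_{j'})$, a polynomial in $C$, hence an element of $\mathcal{A}$. (Equivalently, one can argue that $\mathcal{A}$ is a commutative semisimple real algebra all of whose elements have real spectra, so $\mathcal{A}\cong\mathbb{R}^{\ell+1}$ with $P_0,\ldots,P_\ell$ its primitive idempotents.) Finally, since $\obf\obf^T=\sum_iB_i\in\mathcal{A}$ is a rank-one element that is diagonal and class-constant in the basis $(v_k)$, exactly one class $S_{j_0}$ can carry its unique nonzero eigenvalue and that class must be a singleton; its basis vector spans $\operatorname{Im}(\obf\obf^T)=\operatorname{span}(\obf)$, so $\obf$ is a scalar multiple of one of the $v_k$, which completes the proof.
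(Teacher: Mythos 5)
The paper does not actually prove this lemma; it only cites it (E1--E4 of the Goemans--Rendl survey and Theorem 2.6 of Bailey), and the argument in those sources is precisely the Bose--Mesner algebra argument you give: $\operatorname{span}(B_0,\ldots,B_\ell)$ is a commutative $(\ell+1)$-dimensional algebra of symmetric matrices, hence simultaneously orthogonally diagonalizable, with its primitive idempotents $P_0,\ldots,P_\ell$ recovered by Lagrange interpolation and with $\obf\obf^T$ forcing $\obf$ to span a one-dimensional joint eigenspace. Your proof is correct, and the one genuinely delicate step --- that the number of joint eigenspaces equals $\ell+1$ --- is handled properly via the generic element $C=\sum_i t_iB_i$ and the interpolation identity $P_j=\prod_{j'\neq j}(C-c_{j'}I)/(c_j-c_{j'})$.
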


Theorem \ref{th: association2}, which is proven in Appendix \ref{sec: appendix B}, states that if the parameter matrix $\hat{\theta} \in \mathbb{R}^{K \times K}$ is in the span of an association scheme, then the decomposition \eqref{eq: structured Y} will hold, with $\ell$ equal to the number of matrices in the association scheme, and with precomputable $\{S_l\}$ and $v_1,\ldots,v_K$:

\begin{theorem}\label{th: association2}
Let $F$ be defined as \eqref{eq: F theorem}, for any symmetric $A \in \{0,1\}^{n \times n}$ and $\hat{\theta} \in \operatorname{span}(B_0,\ldots,B_\ell)$, where $\{B_i\}_{i=0}^\ell$ form an association scheme. Let $X,W,Y,U,V \in \mathbb{R}^{nK \times nK}$ be initialized to zero and evolve by the ADMM equations \eqref{eq: admm}. Then \eqref{eq: structured Y} holds for all $t$, with $v_1,\ldots,v_K$ and $S_0,\ldots,S_\ell$ satisfying \eqref{eq: association}. 


\end{theorem}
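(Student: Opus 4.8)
## Proof Plan

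The plan is to prove the decomposition \eqref{eq: structured Y} by induction on the ADMM iteration count $t$, showing that at every step each $K \times K$ submatrix of each of the matrices $X^t, W^t, Y^t, U^t, V^t$ (and of $F$) lies in the subspace
\[
\mathcal{V} = \operatorname{span}\Bigl( v_j v_j^T : j \in [K]\Bigr) = \operatorname{span}\Bigl( E_0,\ldots,E_\ell \Bigr),
\]
where $v_1,\ldots,v_K$ are the common eigenvectors of the association scheme $\{B_i\}$ guaranteed by Lemma \ref{le: association}, and $E_l = \sum_{k \in S_l} v_k v_k^T$ are the corresponding spectral projectors. The key observation is that $\mathcal{V}$ is closed under all the operations appearing in \eqref{eq: admm}, so the property propagates.

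First I would verify the base case: at $t=0$ all auxiliary matrices are zero, whose submatrices trivially lie in $\mathcal{V}$. Then I would check that the submatrices of $F$ lie in $\mathcal{V}$: by \eqref{eq: F theorem}, $F^{(ij)}_{ab} = A_{ij}\log\hat\theta_{ab} + (1-A_{ij})\log(1-\hat\theta_{ab})$, so it suffices that the entrywise functions $\hat\theta \mapsto \log \hat\theta$ and $\hat\theta \mapsto \log(1-\hat\theta)$ applied to a matrix in $\operatorname{span}(B_0,\ldots,B_\ell)$ again land in $\mathcal{V}$. Here I would use that $\hat\theta$, being in the span of the scheme, is simultaneously diagonalized by $v_1,\ldots,v_K$ and moreover is constant on each block $S_l$ in the sense that it has a single eigenvalue $\mu_l$ on $\operatorname{span}\{v_k : k \in S_l\}$; equivalently (using that the $B_i$ are $0/1$ matrices partitioning $\obf\obf^T$ and that each $B_i$ is constant on the "cells" $S_l$) $\hat\theta = \sum_i \gamma_i B_i$ is a matrix whose distinct entries are indexed by the scheme, so any entrywise function of $\hat\theta$ is again a linear combination of $B_0,\ldots,B_\ell$, hence lies in $\mathcal{V}$. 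Thus $F^{(ij)} \in \mathcal{V}$ for all $i,j$.

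For the inductive step, assume all submatrices of $W^t, U^t, Y^t, V^t$ lie in $\mathcal{V}$. Then each submatrix of the argument $\frac12(W^t - U^t + Y^t - V^t + \frac1\rho F)$ of $\Pi_{\mathcal{X}}$ lies in $\mathcal{V}$, since $\mathcal{V}$ is a linear subspace. The operator $\Pi_{\mathcal{X}}$ only adjusts each submatrix by a multiple of the all-ones $K\times K$ matrix $\obf\obf^T$ (to enforce $\sum_{ab} X^{(ij)}_{ab} = 1$), and $\obf\obf^T = \sum_{i=0}^\ell B_i \in \mathcal{V}$ since $\obf$ is one of the $v_k$'s; hence $X^{t+1}$ has all submatrices in $\mathcal{V}$. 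The map $W^{t+1} = \max(0, X^{t+1}+U^t)$ is entrywise, and — this is the one genuinely delicate point — I need that applying $\max(0,\cdot)$ entrywise to a matrix in $\mathcal{V}$ stays in $\mathcal{V}$; this again follows because any matrix in $\mathcal{V}$ is constant on the cells of the scheme, so its entrywise image under $\max(0,\cdot)$ is still a linear combination of the $B_i$. Similarly $Y^{t+1} = \Pi_{\mathbb{S}_+}(X^{t+1}+V^t)$ is computed as in \eqref{eq: Pi_S}: since the common eigenvectors diagonalize every submatrix, projecting onto the PSD cone just clips negative eigenvalues, which preserves the common eigenbasis and hence membership in $\mathcal{V}$. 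Finally $U^{t+1}$ and $V^{t+1}$ are linear combinations of matrices already shown to be in $\mathcal{V}$. This closes the induction, and \eqref{eq: structured Y} then follows by expanding each submatrix of $X^{t+1}+V^t \in \mathcal{V}$ in the basis $\{E_l\}$.

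The main obstacle, and the step deserving the most care, is the claim that the entrywise operations $\log(\cdot)$, $\log(1-\cdot)$, and $\max(0,\cdot)$ preserve $\mathcal{V}$. The clean way to see this is to characterize $\mathcal{V}$ not spectrally but combinatorially: a symmetric $K\times K$ matrix $M$ lies in $\operatorname{span}(B_0,\ldots,B_\ell)$ if and only if $M_{ab}$ depends only on which scheme relation $(a,b)$ belongs to, i.e. $M$ is constant on the support of each $B_i$. This property is obviously preserved by any entrywise function, and it coincides with $\mathcal{V}$ because the $B_i$ are the $0/1$ indicator matrices of a partition of $[K]^2$ and simultaneously share the eigenbasis $v_1,\dots,v_K$ by Lemma \ref{le: association}. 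I would state this equivalence as a short preliminary lemma and then the rest of the argument is the routine induction sketched above.
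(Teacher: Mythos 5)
Your proof is correct and takes essentially the same route as the paper's: an induction over the ADMM iterations showing that every $K\times K$ submatrix of every iterate (and of $F$) stays in $\operatorname{span}(B_0,\ldots,B_\ell)$, using that entrywise functions such as $\log(\cdot)$ and $\max(0,\cdot)$ preserve this span because the $B_i$ are binary with disjoint supports, that $\Pi_{\mathcal{X}}$ only perturbs the eigenvalue attached to the all-ones eigenvector, and that $\Pi_{\mathbb{S}_+}$ acts eigenvalue-wise in the shared eigenbasis. One minor imprecision: $\operatorname{span}(v_jv_j^T : j\in[K])$ is $K$-dimensional and in general strictly contains the $(\ell+1)$-dimensional $\operatorname{span}(E_0,\ldots,E_\ell)$, but your closing combinatorial characterization makes clear the intended invariant subspace is $\operatorname{span}(B_0,\ldots,B_\ell)$, where the argument goes through exactly as in the paper.
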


In Section \ref{sec: examples}, we will give three examples of semidefinite programs in which $\hat{\theta}$ can be shown to belong to  association schemes, with $\ell = 1, \ell=2$, and $\ell \leq K$ respectively. We remark that association schemes were originally invented by statisticians for the study of experiment design \cite{bailey2003designs, bailey2004association}, and that they have been used with semidefinite programming in \cite{de2008semidefinite} to lower bound the optimal objective value of the traveling salesperson problem (without finding a feasible solution).


\section{Simulation results} \label{sec: examples}

To illustrate the usage and performance of the semidefinite program, we show simulated results for three examples: community structure, overlapping communities, and latent space models. 

\subsection{Community Structure}

In the best-understood blockmodel setting, $A$ is generated by $\theta \in [0,1]^{K \times K}$ which has parameters $\gamma_0, \gamma_1 \in [0,1]$, and equals
\begin{align}\label{eq: community model}
	\theta = \gamma_0 I + \gamma_1 (\obf \obf^T - I).
\end{align}
In this model, nodes connect with probability $\gamma_0$ if they are in the same class, and probability $\gamma_1$ if they are in different classes.

Under \eqref{eq: community model}, the parameter matrix $\theta$ can be written as 
\[\theta = \gamma_0 B_0 + \gamma_1 B_1, \]
 for $B_0 = I$ and $B_1 = \obf \obf^T - I$. By manual verification, this can be seen to satisfy the requirements given in Definition \ref{def: association scheme} for an association scheme with $\ell=1$ so that fast methods can be used to evaluate the ADMM iterations. 

Figure \ref{fig: community} shows an adjacency matrix $A$ generated by \eqref{eq: community model}, the estimated class labels $\hat{z}$ found by spectral clustering of $\hat{P}$ (as discussed in Section \ref{sec: estimation}), and the estimates found by direct spectral clustering of $A$. In this instance, the semidefinite program yields nearly perfect class estimates, while spectral clustering of $A$ fails due to its sparsity. 

Figure \ref{fig: batch community} shows the average simulated performance, over a range of values for the network size $n$ and average degree $n\rho$. We see that for $n \rho \geq 10$, the semidefinite program has roughly constant error if the average degree is fixed, which is consistent with Theorem \ref{th: sbm} (and other known results for community detection). In contrast, the misclassification rate for spectral clustering of $A$ increases with the sparsity of the graph. This is exemplified by subplot (d), where spectral clustering of $A$ performs well for small networks but degrades severely as $n$ increases.


\begin{figure} 
    \centering
    \begin{subfigure}[b]{0.24\textwidth}
        \includegraphics[width=\textwidth]{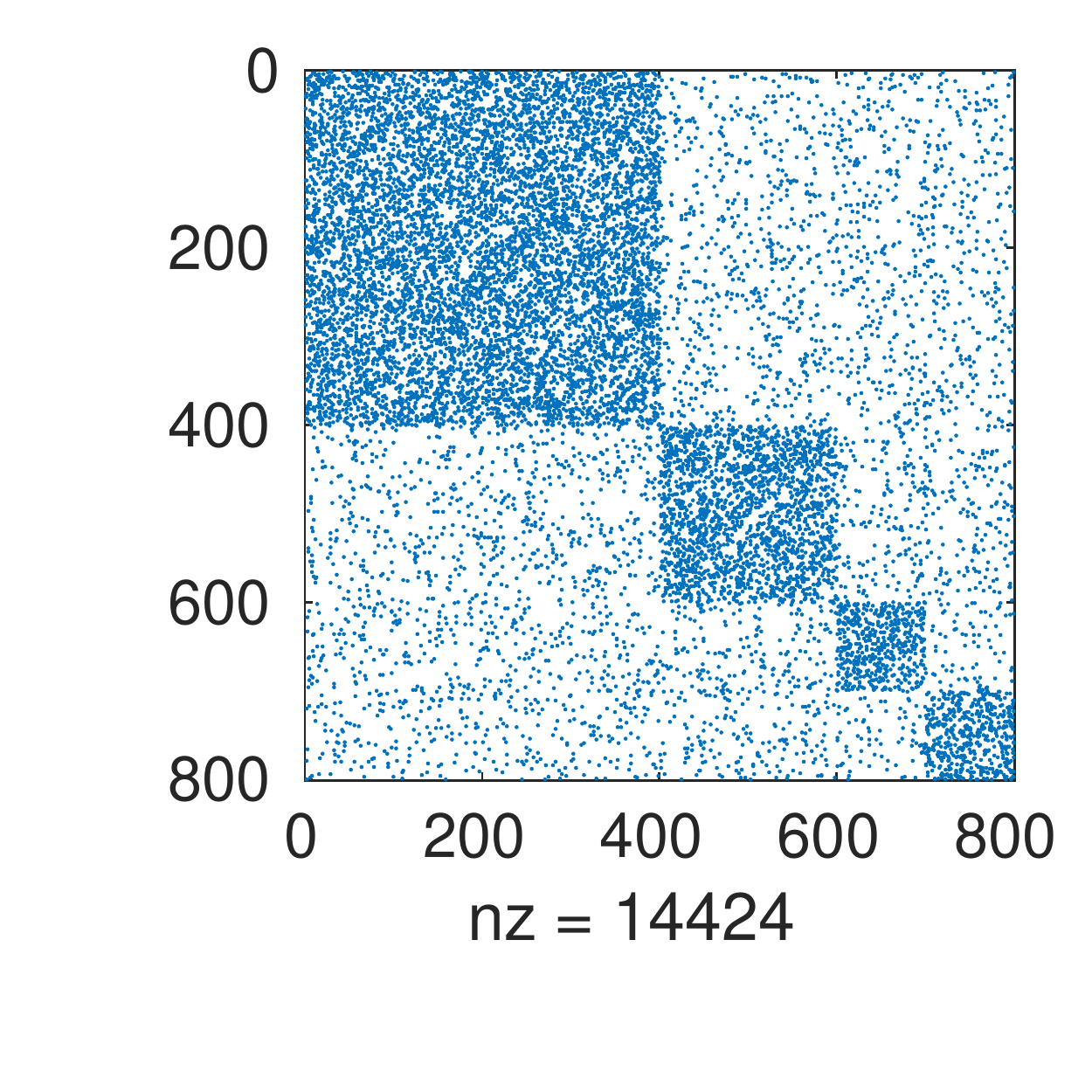}
        \caption{Adj. Matrix $A$} 
    \end{subfigure}
    \begin{subfigure}[b]{0.24\textwidth}
        \includegraphics[width=\textwidth]{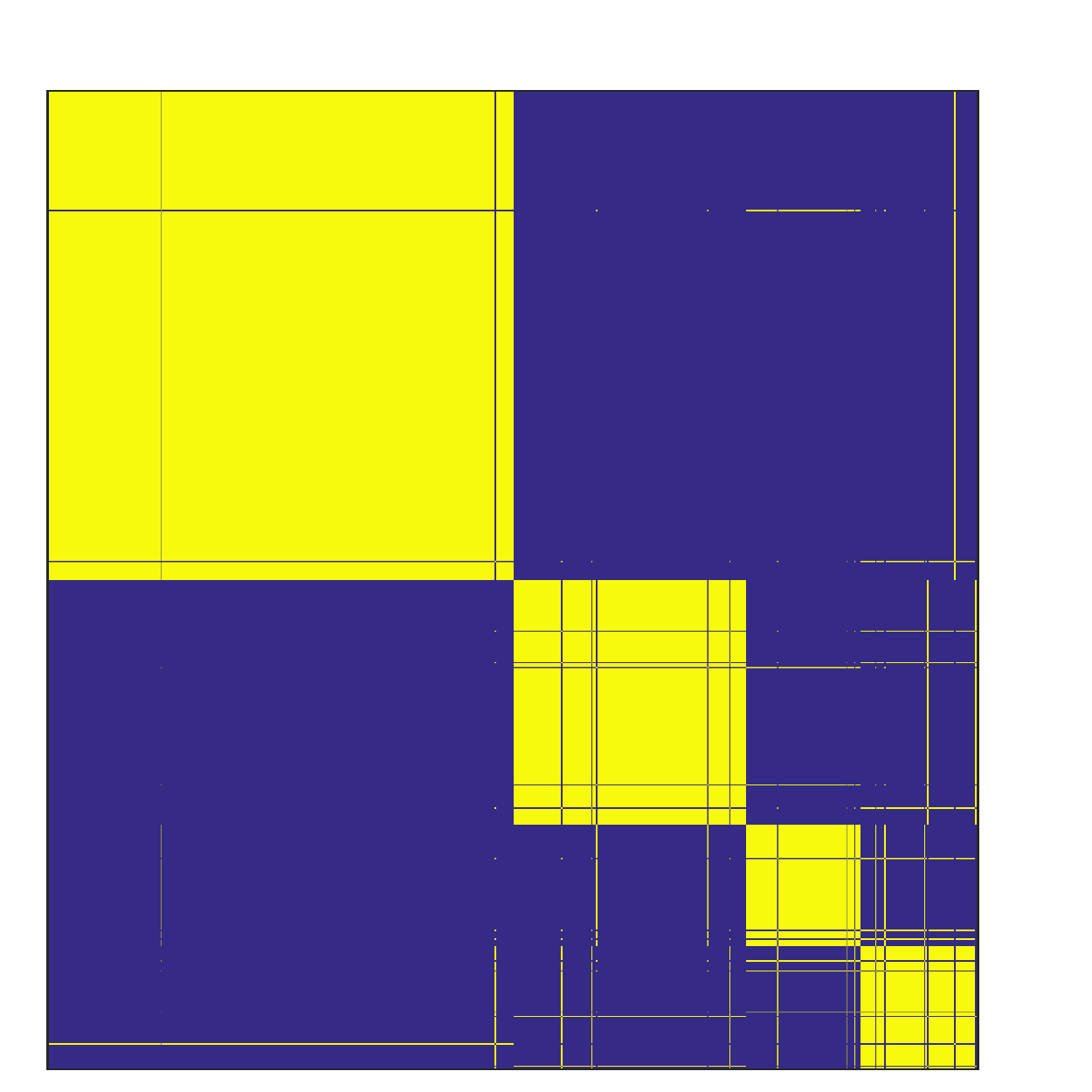}
        \caption{$\hat{z}$}
    \end{subfigure}
    \begin{subfigure}[b]{0.24\textwidth}
        \includegraphics[width=\textwidth]{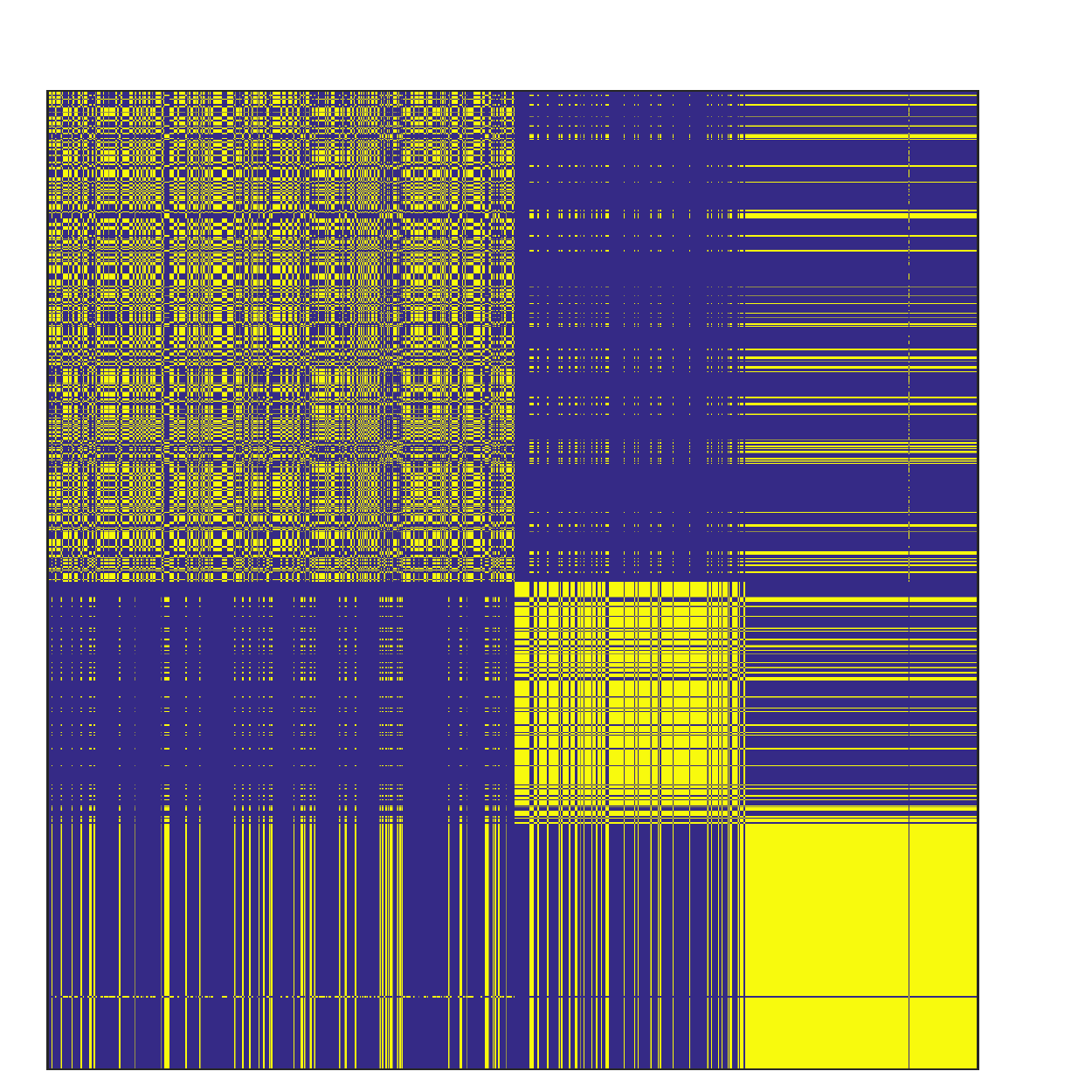}
        \caption{Spectral clusters}
    \end{subfigure}
    \caption{Community structured blockmodel \eqref{eq: community model} with $n=800$ nodes, $K=4$ classes, average degree $=18$, and unbalanced class sizes. (a) adjacency matrix $A$. (b) similarity matrix for $\hat{z}$, which resulted in 4 miss-classified nodes. (c) shows the similarity matrix for class estimates found by spectral clustering of $A$, which split the largest community, failed to find the smallest two communities, and resulted in $447$ miss-classified nodes.}\label{fig: community}
\end{figure}

\begin{figure} 
    \centering
    \begin{subfigure}[b]{0.24\textwidth}
        \includegraphics[width=\textwidth]{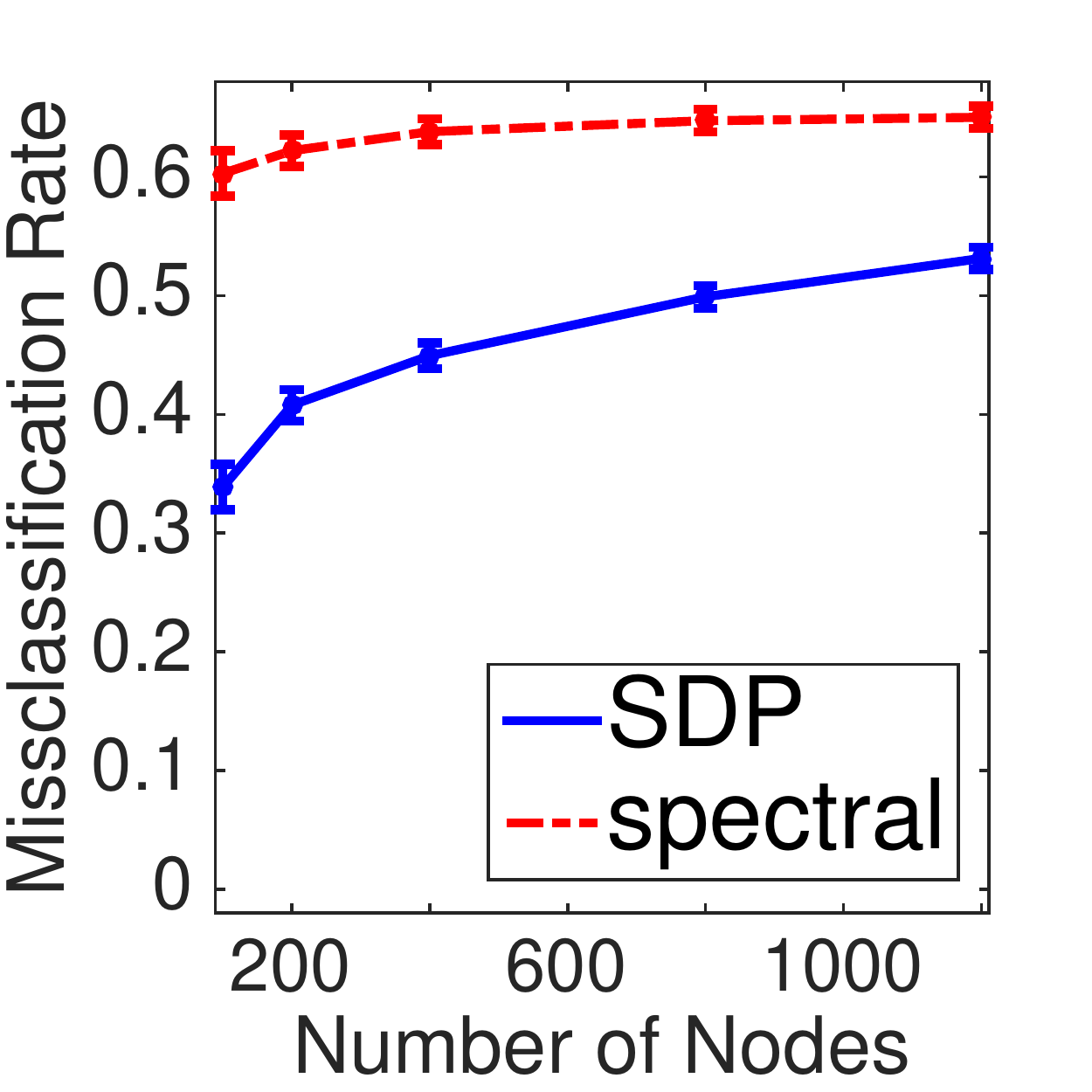}
        \caption{$n\rho = 5$ }
    \end{subfigure}
    \begin{subfigure}[b]{0.24\textwidth}
        \includegraphics[width=\textwidth]{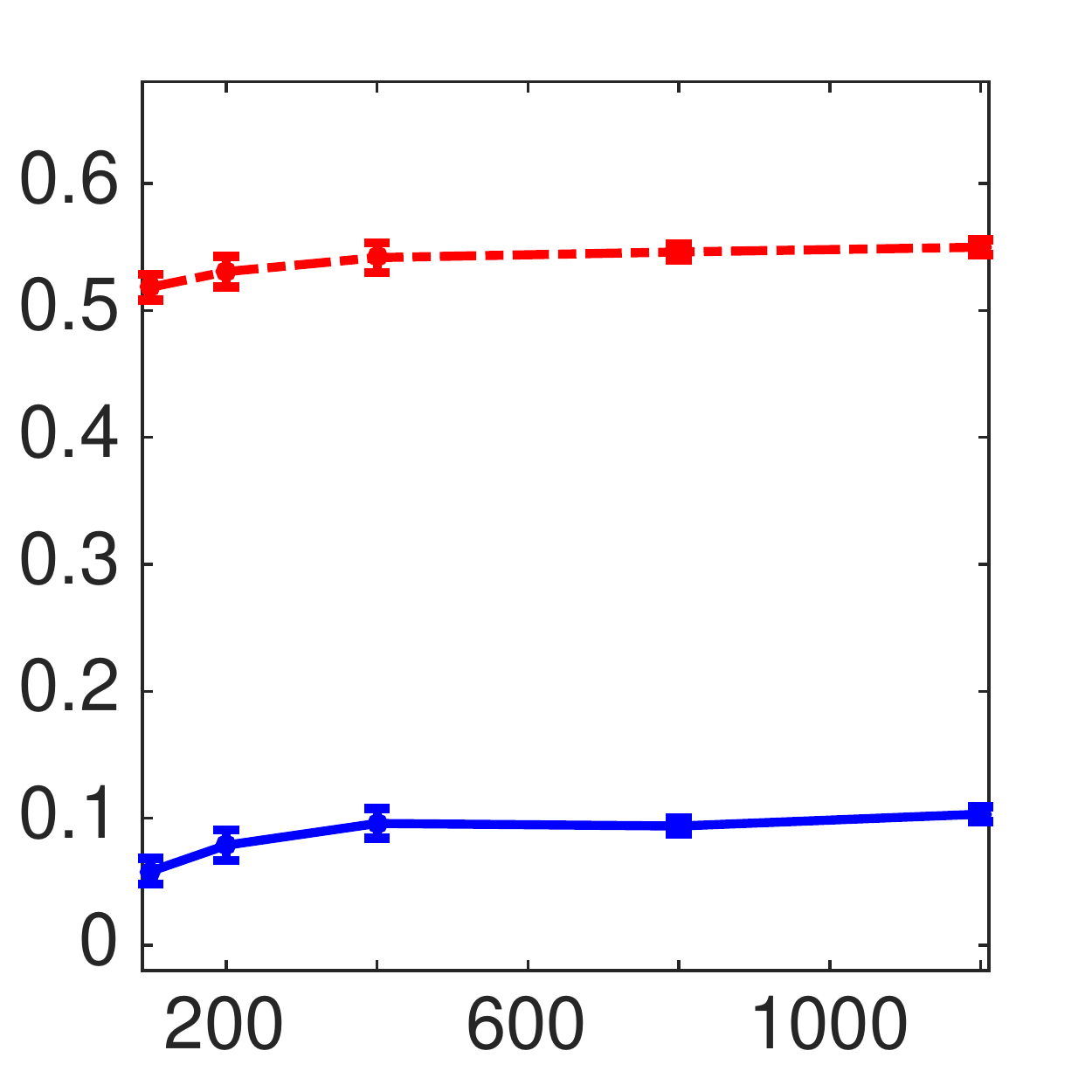}
        \caption{$n\rho = 10$}
    \end{subfigure}
    \begin{subfigure}[b]{0.24\textwidth}
        \includegraphics[width=\textwidth]{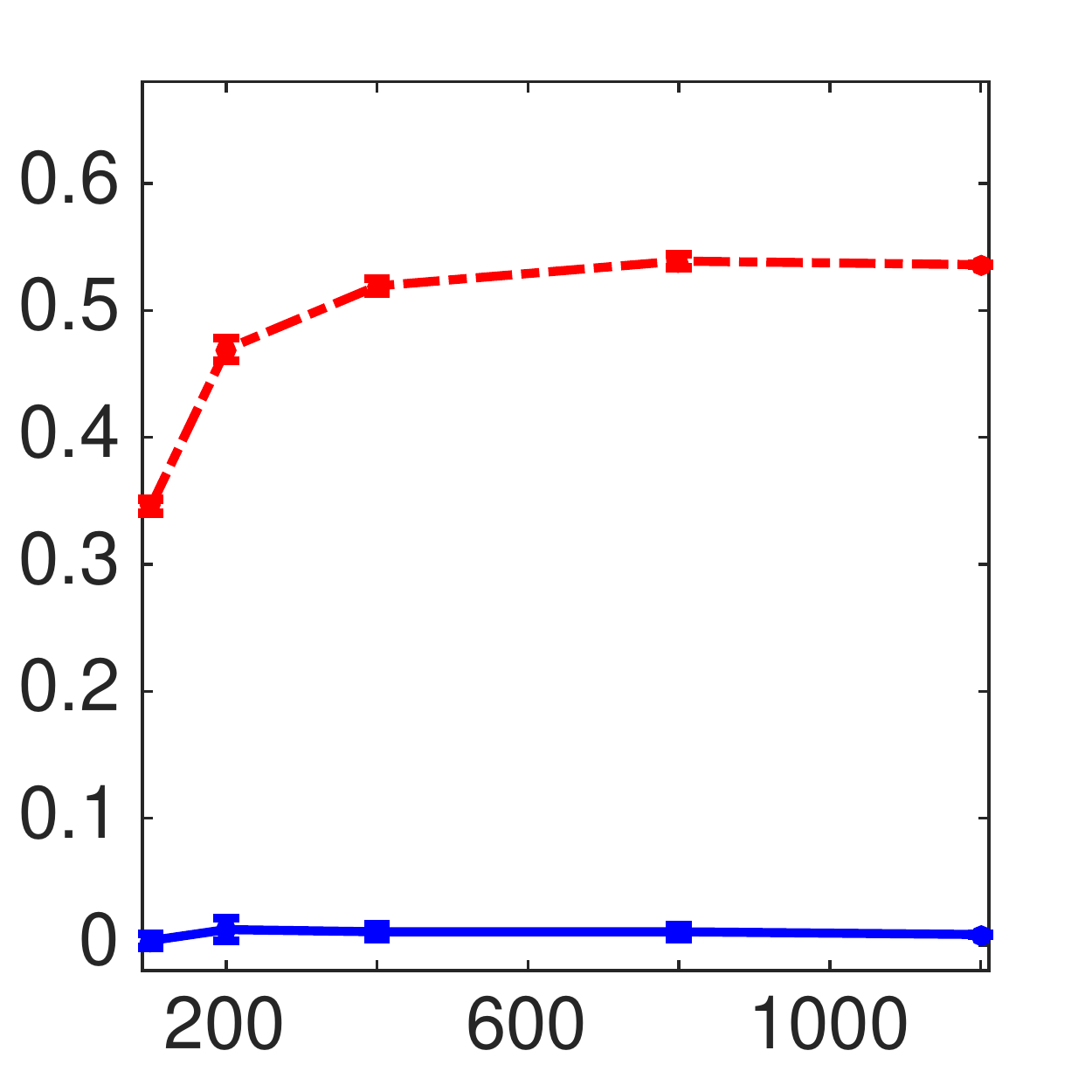}
        \caption{$n\rho = 20$}
    \end{subfigure}
    \begin{subfigure}[b]{0.24\textwidth}
        \includegraphics[width=\textwidth]{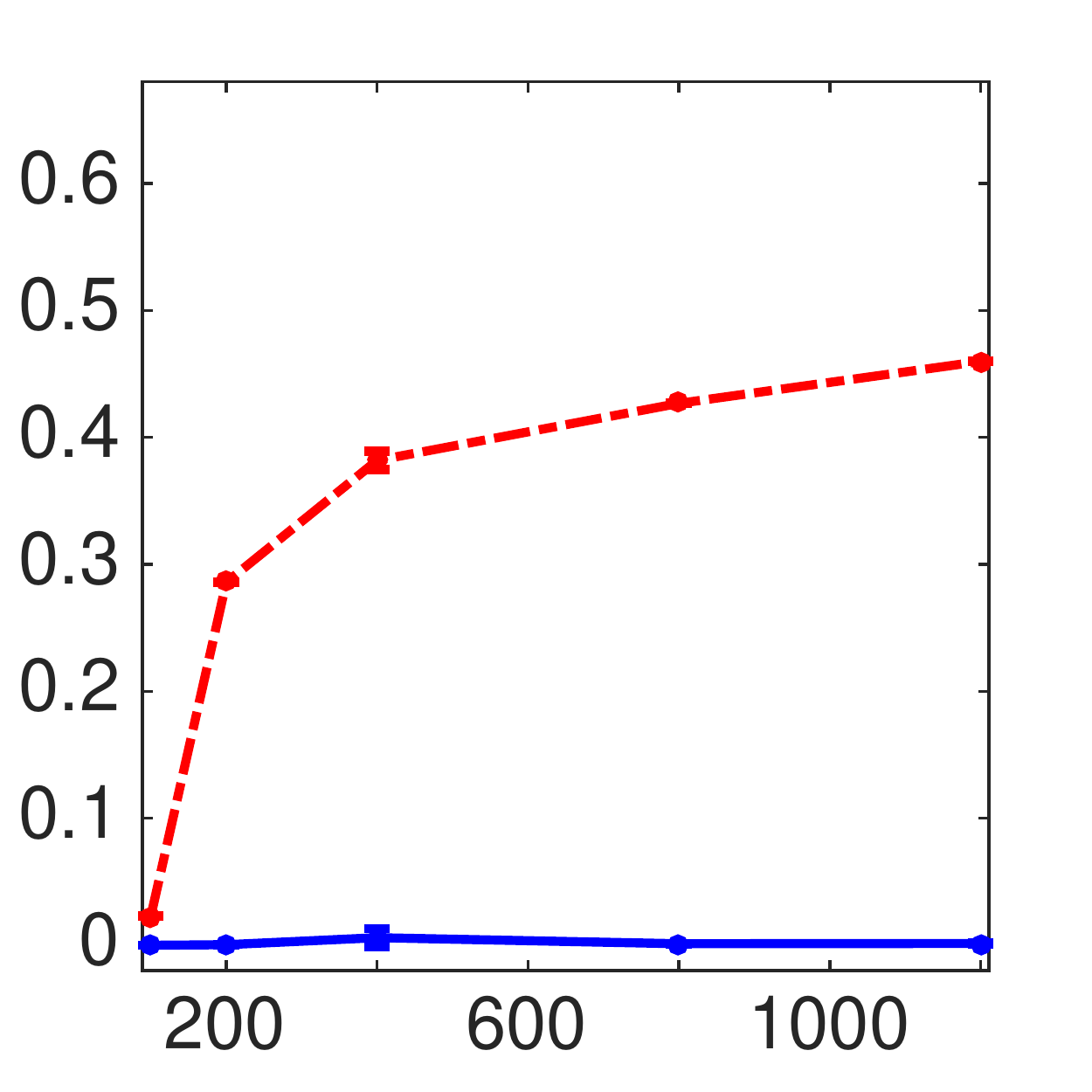}
        \caption{$n\rho = 30$}
    \end{subfigure}
    \caption{Misclassification rates for $\hat{z}$ found by semidefinite programming (blue line, solid) vs. spectral clustering of $A$ (red line, dotted), under the community structured blockmodel \eqref{eq: community model} with $K=4$, $n = \{100, 200, 400, 800, 1200\}$, and expected average degree $n\rho = \{5, 10, 20, 30\}$. 100 simulations per trial, standard errors shown.} 
	\label{fig: batch community}
\end{figure}



\subsection{Two Overlapping Groups of Communities} \label{sec: overlapping}

Let parameters $\gamma_0,\gamma_1 \in [0,1]$ satisfy $\gamma_0 > \gamma_1$, and let $K = k^2$ for some integer $k$, so that each $a \in [K]$ has a $k$-ary representation $(a_1,a_2) \in [k]^2$. Let $\theta$ equal
\begin{equation} \label{eq: overlapping}
	\theta_{ab} = \begin{cases} \gamma_0 & \textrm{ if } a_1 = b_1 \textrm{ or } a_2 = b_2 \\ \gamma_1 & \textrm{ otherwise}\end{cases} \qquad a,b\in [K].
\end{equation}
In this model, there exist two types of community structure, each type comprised of $k$ communities, where node belongs to one community of each type. Two nodes have a higher probability of connection if they share at least one community in common.

We observe that $\theta = \gamma_0 B_0 + \gamma_0 B_1 + \gamma_1 B_2$, where $B_0, B_1,$ and $B_2 \in \{0,1\}^{K \times K}$ are given by
\begin{align*}
B_0 &= I \\
B_1 &= (I - 11^T) \otimes I + I \otimes (I - 11^T) \\
B_2 &= (I - 11^T) \otimes (I - 11^T).
\end{align*}
By manual verification, $\{B_0,B_1,B_2\}$ can be seen to satisfy the requirements given in Definition \ref{def: association scheme} for an association scheme with $\ell = 2$.

For this model, it may be of interest to not only compute $\hat{z}$ and $\hat{\theta}^{\textrm{est}}$, which estimate $z$ and $\theta$ up to label switching, but to also estimate the $2k$ overlapping communities, which we will denote by $\mathcal{C}_1,\ldots,\mathcal{C}_{2k} \subset [n]$. It can be seen that $\mathcal{C}_1,\ldots,\mathcal{C}_{2k}$ are given by
\[\mathcal{C}_{l + (m-1)k} = \{i \in [n]: z_{im} = l\} \qquad l\in [k], m\in [2],\]
where each $z_i \in [K]$ has $k$-ary representation $(z_{i1},z_{i2})$. Equivalently, $\mathcal{C}_1,\ldots,\mathcal{C}_{2k}$ may also be defined as follows: Let $G$ denote a graph with $K$ vertices, and with edges induced by thresholding $\theta$ between $\gamma_0$ and $\gamma_1$:
\[ G_{ab} = \begin{cases} 1 & \textrm{ if } \theta_{ab} \geq (\gamma_0 + \gamma_1)/2 \\ 0 & \textrm{ otherwise.} \end{cases} \qquad a,b \in [K].\]
It can be seen that $G$ has $2k$ maximal cliques $C_1,\ldots,C_{2k}$, for which the communities $\mathcal{C}_1,\ldots,\mathcal{C}_{2k}$ satisfy
\begin{equation}\label{eq: S}
\mathcal{C}_\ell = \{i \in [n]: z_i \in C_{\pi(\ell)}\} \qquad \ell=1,\ldots,2k,
\end{equation}
for some permutation $\pi$ of $[2k]$.

Thus, to estimate $\mathcal{C}_1,\ldots,\mathcal{C}_{2k}$, we can construct $\hat{\theta}^{\textrm{est}} \in [0,1]^{K \times K}$ as given by \eqref{eq: hat theta est}, and estimate $G$ by 
\begin{equation}\label{eq: Ghat}
\hat{G}_{ab} = 1\left\{\hat{\theta}^{\textrm{est}}_{ab} \geq \frac{\gamma_0 + \gamma_1}{2}\right\} \qquad a,b \in [K],
\end{equation}
which has maximal cliques $\hat{C}_1,\ldots, \hat{C}_{m'} \subset [K]$ for some $m'$. Using the maximal cliques of $\hat{G}$, we can estimate overlapping communities $\hat{\mathcal{C}}_1,\ldots,\hat{\mathcal{C}}_{m'} \subset [n]$ by
\begin{equation}\label{eq: Shat}
	\hat{\mathcal{C}}_\ell = \{i \in [n]: \hat{z}_i \in \hat{C}_\ell\}, \qquad \ell=1,\ldots, m'.
\end{equation}

We remark that even in settings where the model \eqref{eq: overlapping} is not valid, by \eqref{eq: Ghat} and \eqref{eq: Shat} the subsets $\hat{\mathcal{C}}_1,\ldots,\hat{\mathcal{C}}_{m'}$ are still interpretable as overlapping subsets of densely connected nodes. 

Figure \ref{fig: overlapping}a shows an adjacency matrix $A$ generated by \eqref{eq: overlapping}, with $k=3$ and $K=9$. The pattern of $\theta$ is clearly visible in $A$. Figures \ref{fig: overlapping}b and \ref{fig: overlapping}c show the estimated communities $\hat{\mathcal{C}}_1,\ldots,\hat{\mathcal{C}}_6$ using \eqref{eq: Shat}. For comparison, Figure \ref{fig: overlapping}d shows $\hat{z}$ when the semidefinite program assumes \eqref{eq: community model} instead of \eqref{eq: overlapping}, and Figure \ref{fig: overlapping}e shows the estimate of $z$ under spectral clustering of $A$. In this instance, $\hat{\mathcal{C}}_1,\ldots,\hat{\mathcal{C}}_6$ accurately estimate the true communities $\mathcal{C}_1,\ldots,\mathcal{C}_6$, with a misclassification rate of $0.04$. This is consistent with Theorem \ref{th: sbm}, which predicts that $\hat{P}$ will be nearly block-structured, implying that the subsequent steps of estimating $\hat{z}, \hat{\theta}^{\textrm{est}},$ and $\hat{G}$ will succeed as well. In contrast, the two alternative methods give poor estimates for $z$, with misclassification rates of $0.50$ and $0.23$, respectively.

Figure \ref{fig: batch overlapping} shows the average misclassification rate for $\hat{\mathcal{C}}_1,\ldots,\hat{\mathcal{C}}_6$, in simulations over a range of values for the network size $n$ and average degree $n\rho$. For comparison, the misclassification rate for spectral clustering of $A$ (which estimates $z$ instead of $\mathcal{C}_1,\ldots,\mathcal{C}_6$) is shown as well. The misclassification rate of the semidefinite program is not quite constant in $n$ for fixed $n\rho$, suggesting that the asymptotic results of Theorem \ref{th: sbm} may require larger $n$ for this model compared to the results shown in Figure \ref{fig: batch community}. However, as $n\rho$ increases the semidefinite program estimates the overlapping communities (and hence $z$ as well) with much better accuracy than spectral clustering of $A$, which shows little improvement with increasing $n\rho$.

\begin{figure}
    \centering
    \begin{subfigure}[b]{0.19\textwidth}
        \includegraphics[width=\textwidth]{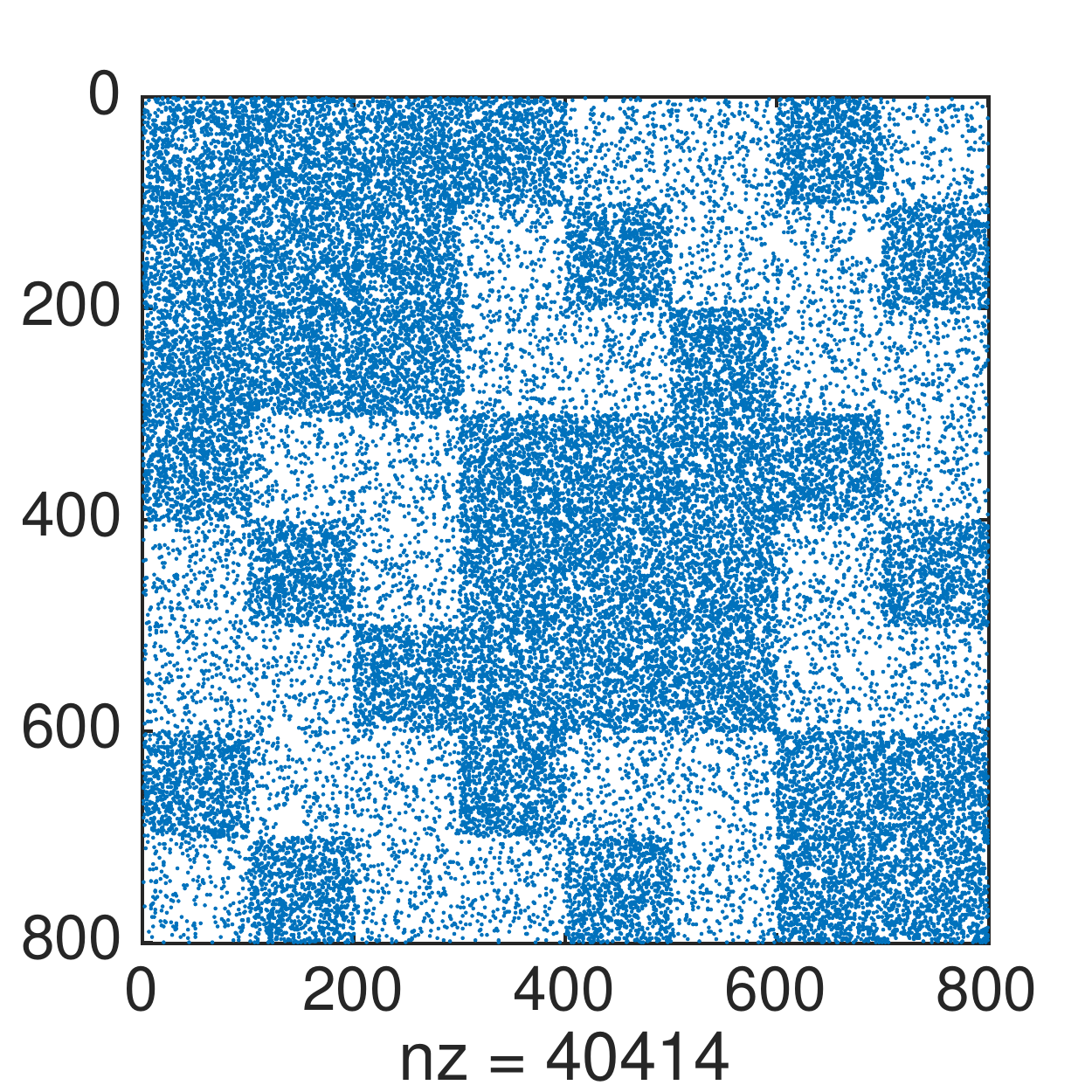}
		\caption{Adjacency matrix $A$}
    \end{subfigure}
   \begin{subfigure}[b]{0.19\textwidth}
       \includegraphics[width=\textwidth]{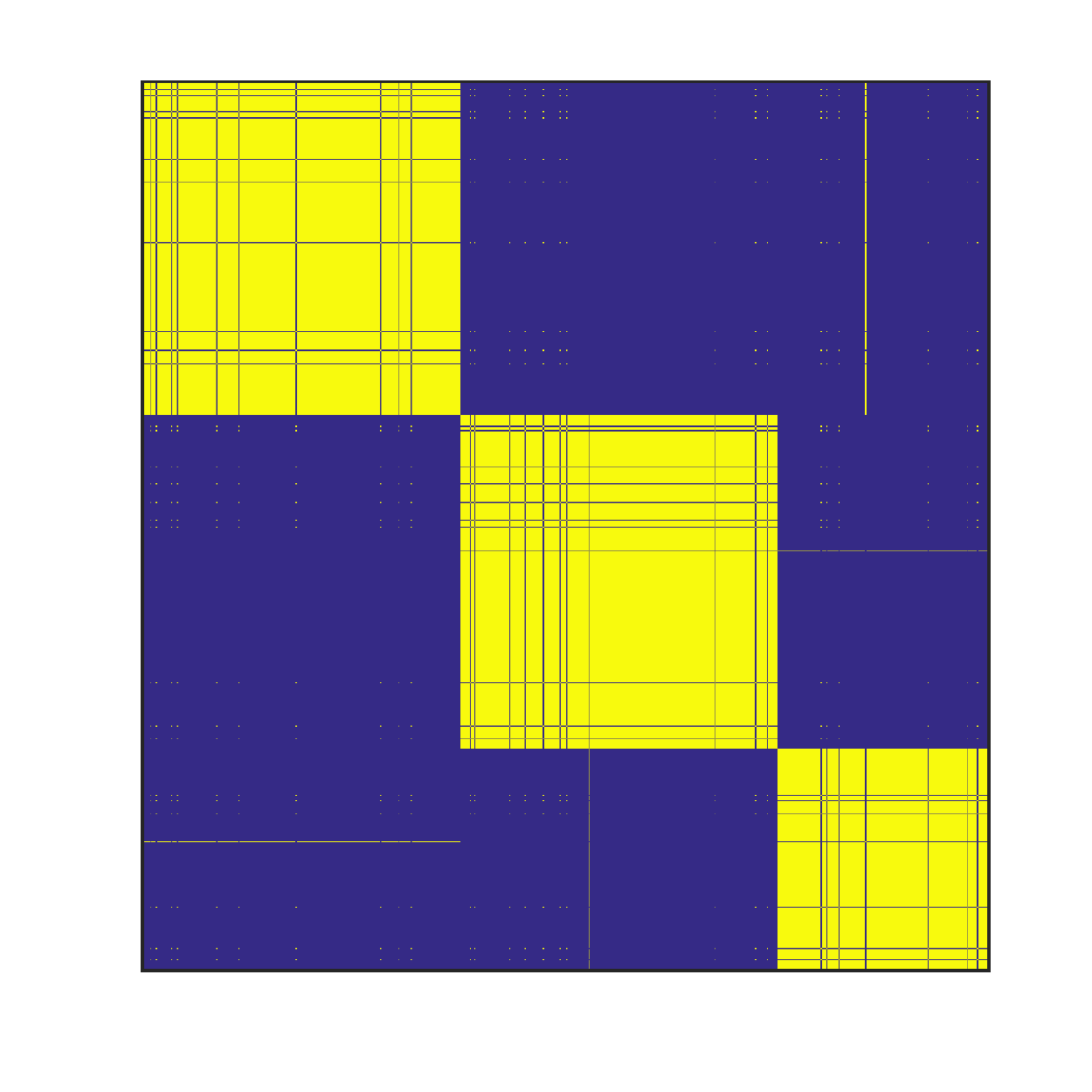}
		\caption{$\hat{\mathcal{C}}_1,\hat{\mathcal{C}}_2,\hat{\mathcal{C}}_3$ \textcolor{white}{1 2 3 4}}
   \end{subfigure}
   \begin{subfigure}[b]{0.19\textwidth}
       \includegraphics[width=\textwidth]{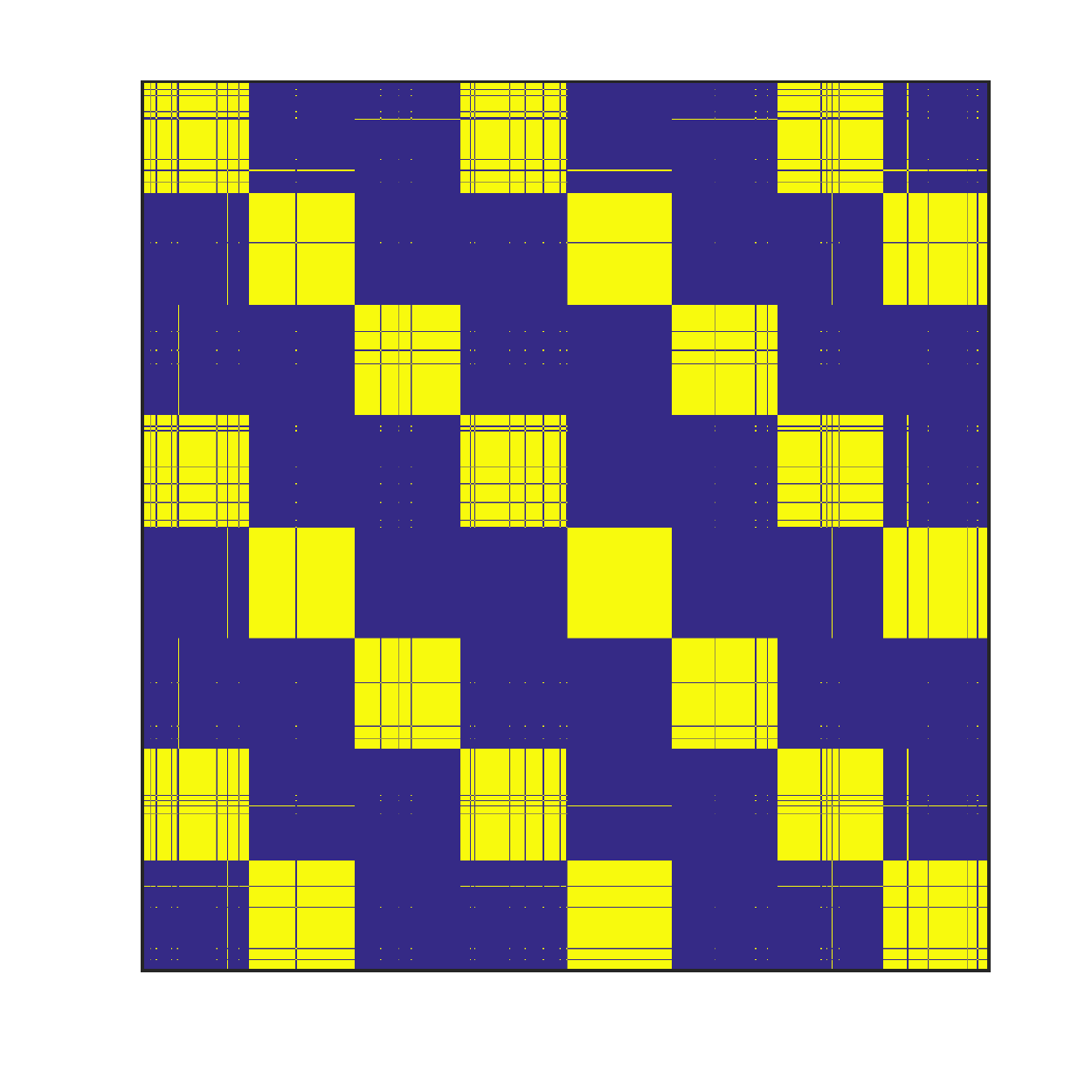}
		\caption{$\hat{\mathcal{C}}_4,\hat{\mathcal{C}}_5,\hat{\mathcal{C}}_6$ \textcolor{white}{1 2 3 4}}
   \end{subfigure}
   \begin{subfigure}[b]{0.19\textwidth}
       \includegraphics[width=\textwidth]{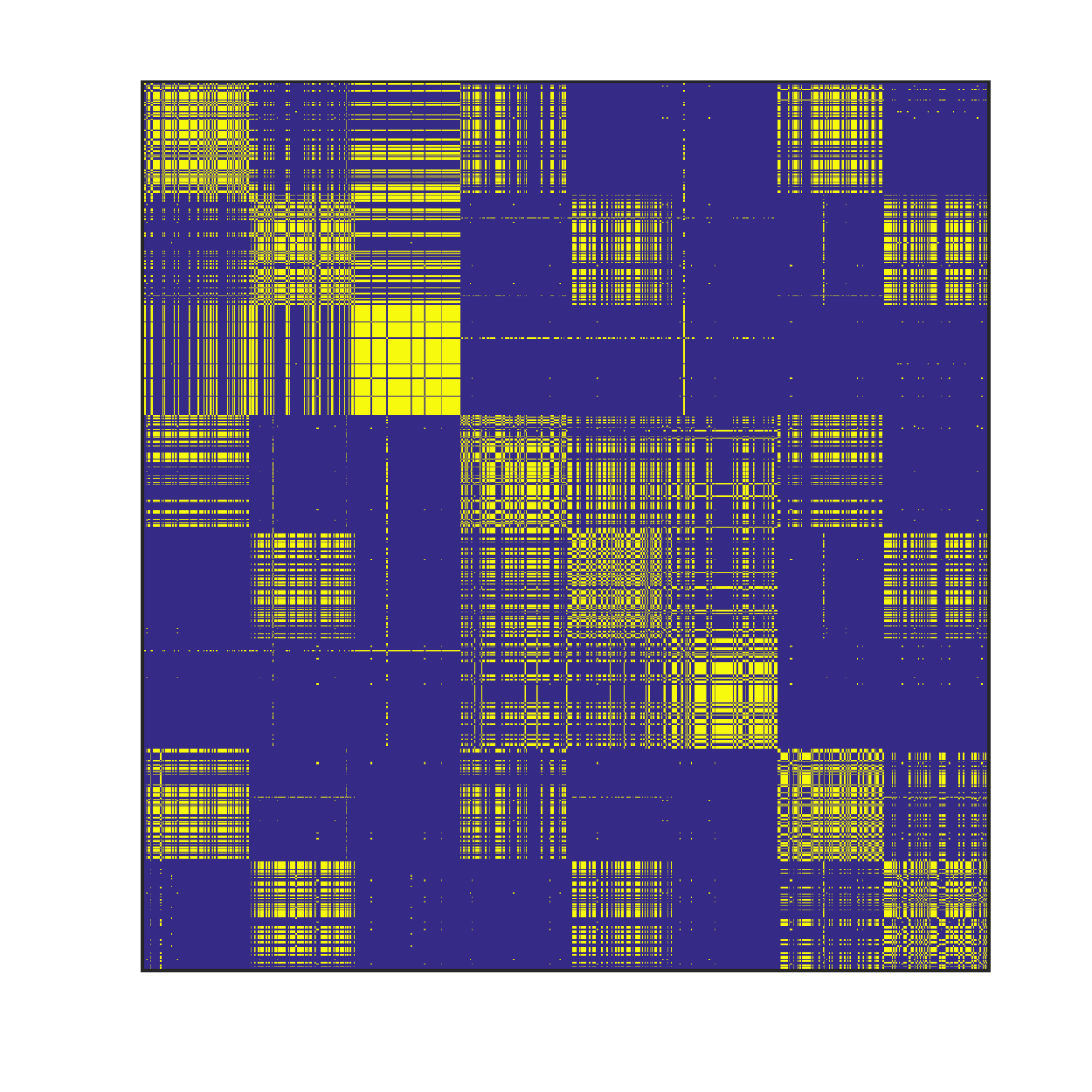}
		\caption{Nonoverlapping model}
   \end{subfigure}
   \begin{subfigure}[b]{0.19\textwidth}
       \includegraphics[width=\textwidth]{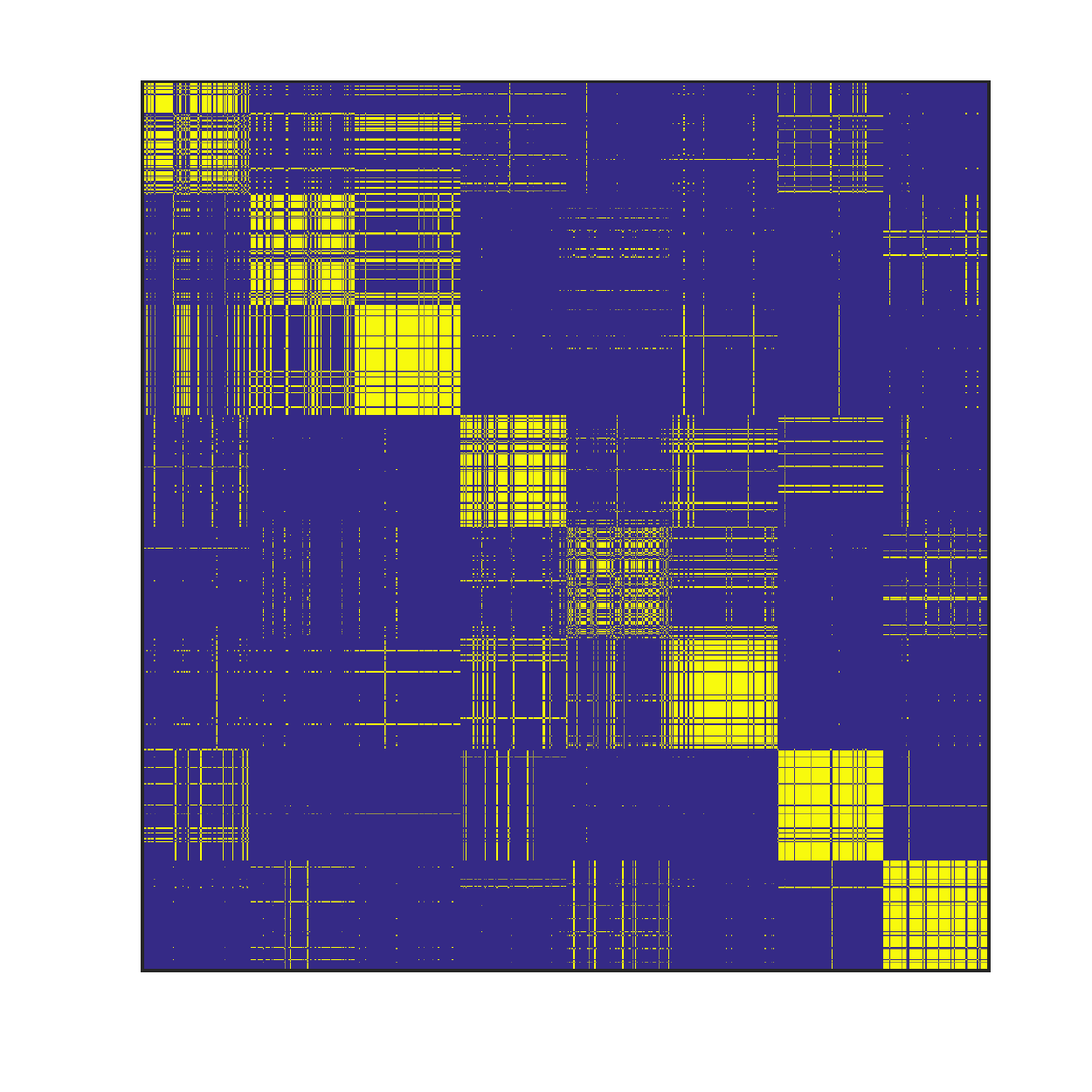}
		\caption{Spectral clustering of $A$}
   \end{subfigure}
    \caption{Overlapping community model \eqref{eq: overlapping}, with $n=800$, $n\rho = 50$, and $6$ overlapping communities $\mathcal{C}_1,\ldots,\mathcal{C}_6$. (a) adjacency matrix $A$. (b) similarity matrix for $\hat{\mathcal{C}}_1,\hat{\mathcal{C}}_2,\hat{\mathcal{C}}_3$. (c) similarity matrix for  $\hat{\mathcal{C}}_4,\hat{\mathcal{C}}_5,\hat{\mathcal{C}}_6$. (d) similarity matrix for $\hat{z}$, but assuming non-overlapping model \eqref{eq: community model}. (e) similarity matrix using spectral clustering of $A$. $\hat{\mathcal{C}}_1,\ldots,\hat{\mathcal{C}}_6$ had $32$ errors, while $\hat{z}$ using the non-overlapping model \eqref{eq: community model} had $397$ errors, and spectral clustering of $A$ had $187$ errors.} \label{fig: overlapping}
\end{figure}

\begin{figure} 
    \centering
    \begin{subfigure}[b]{0.24\textwidth}
        \includegraphics[width=\textwidth]{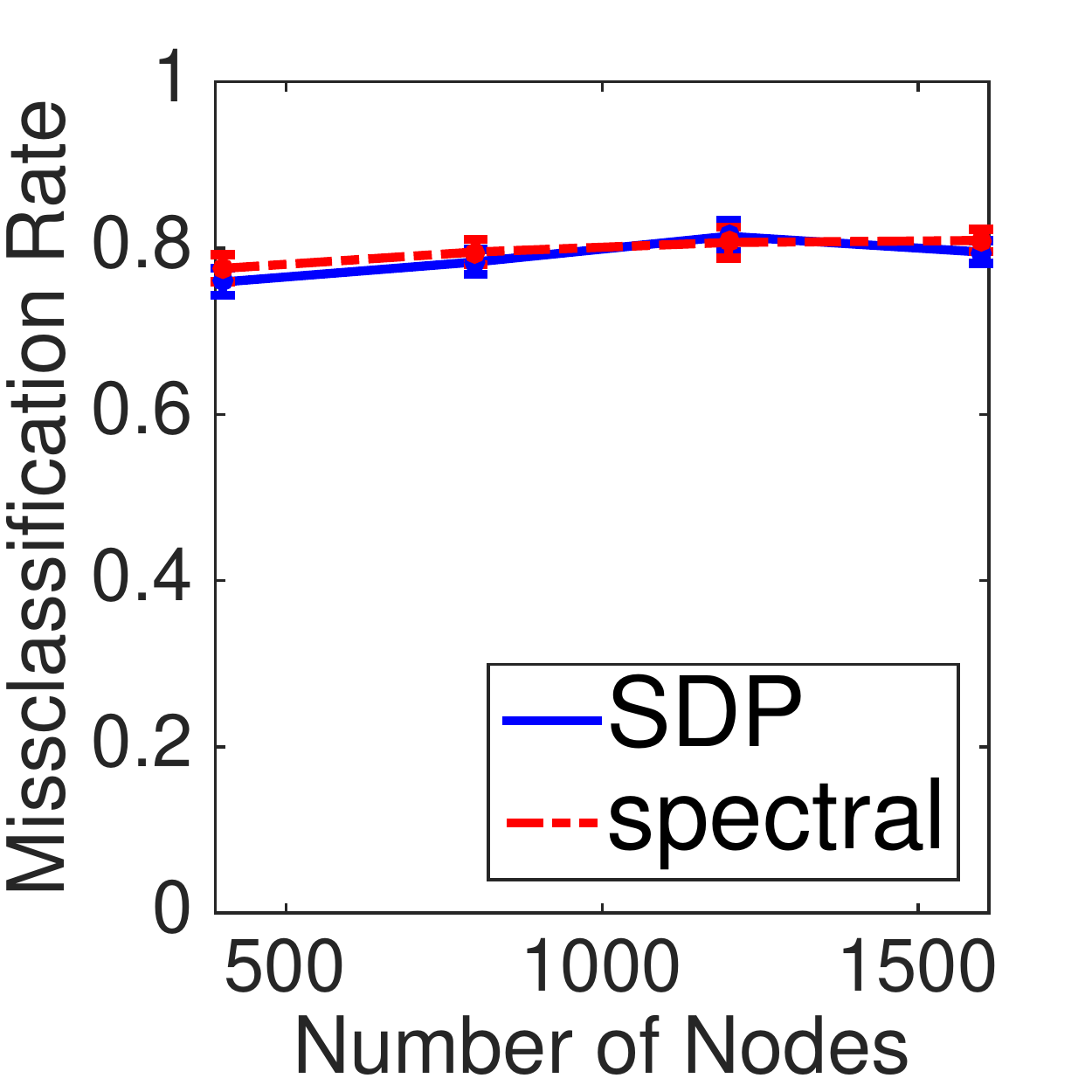}
        \caption{$n\rho = 30$ }
    \end{subfigure}
    \begin{subfigure}[b]{0.24\textwidth}
        \includegraphics[width=\textwidth]{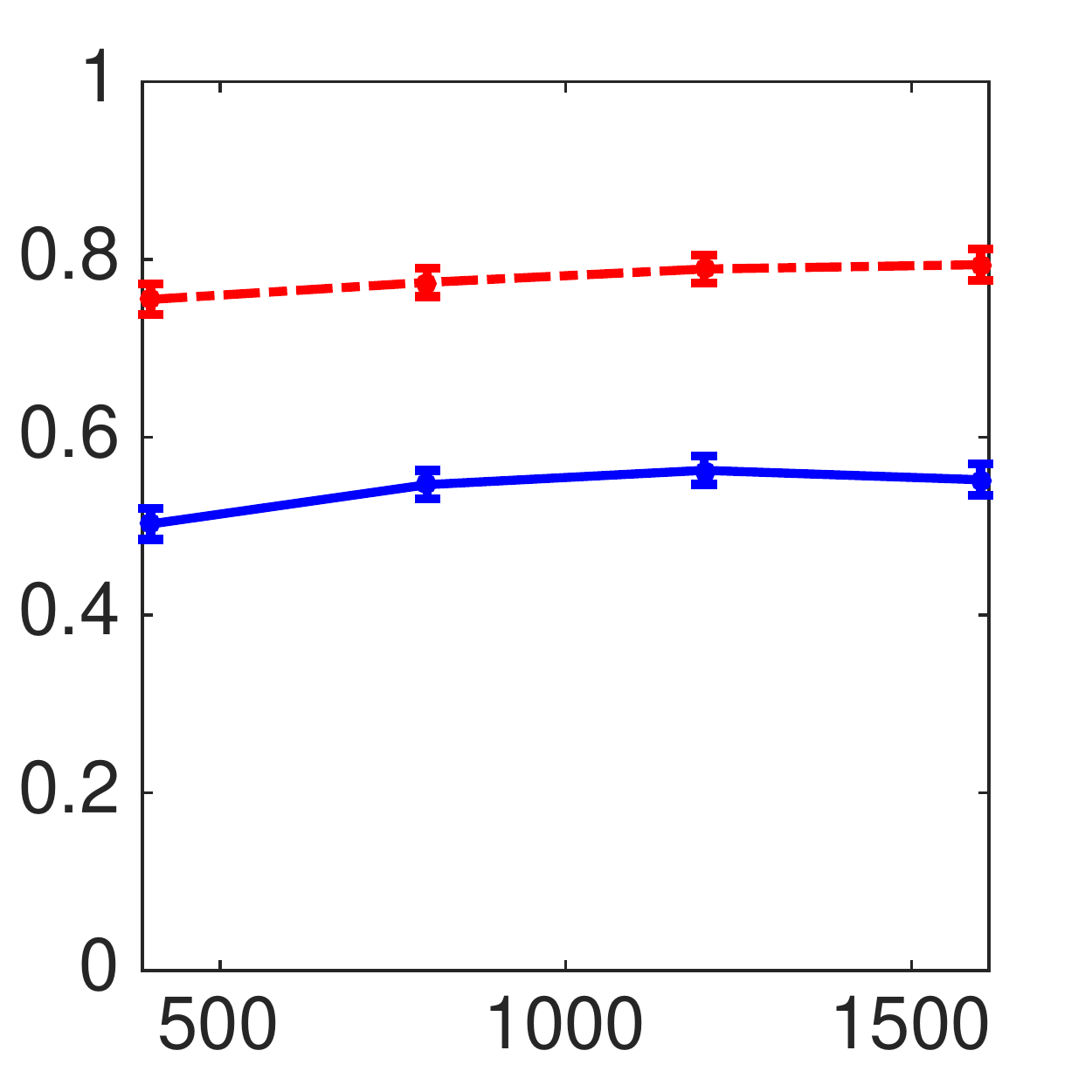}
        \caption{$n\rho = 40$}
    \end{subfigure}
    \begin{subfigure}[b]{0.24\textwidth}
        \includegraphics[width=\textwidth]{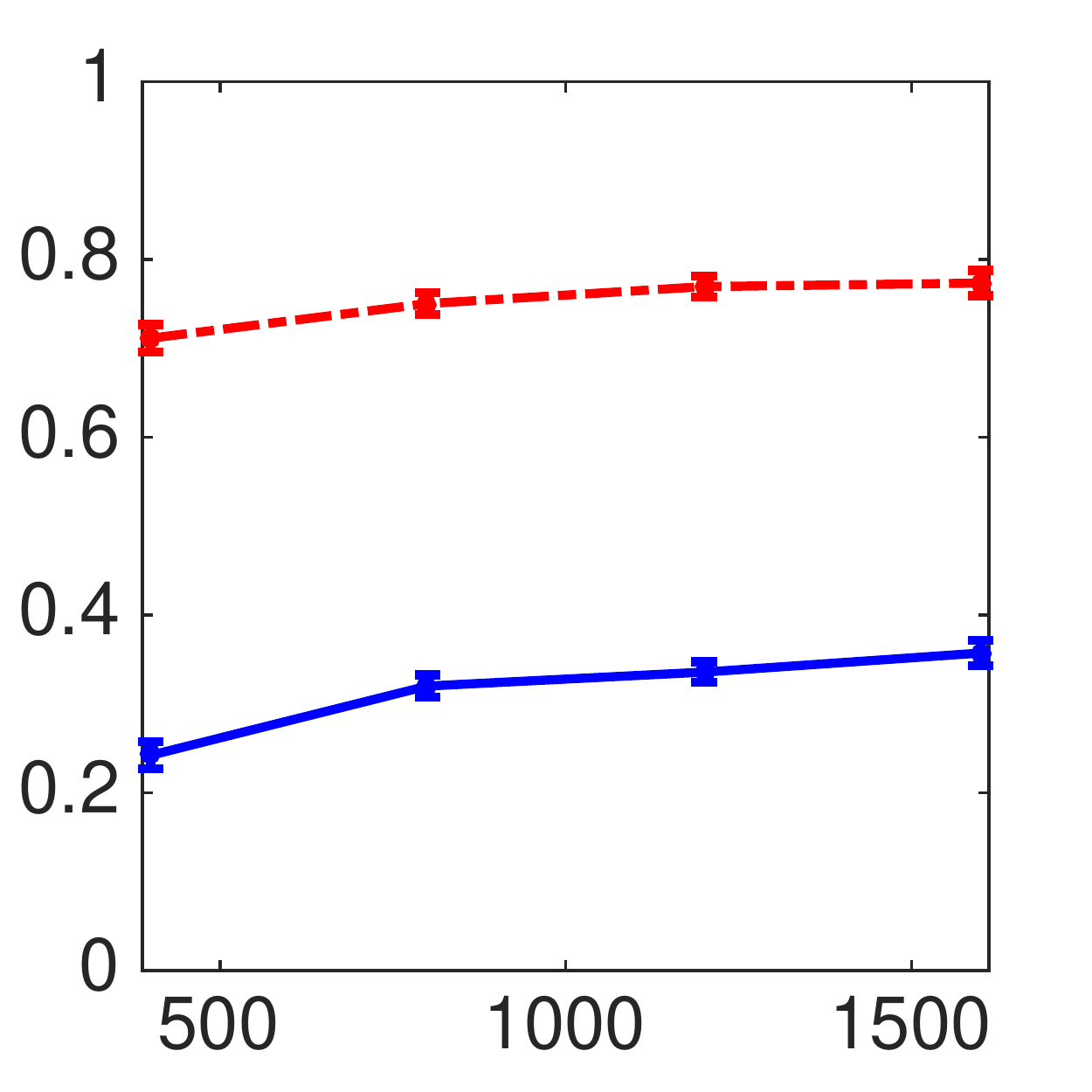}
        \caption{$n\rho = 50$}
    \end{subfigure}
    \begin{subfigure}[b]{0.24\textwidth}
        \includegraphics[width=\textwidth]{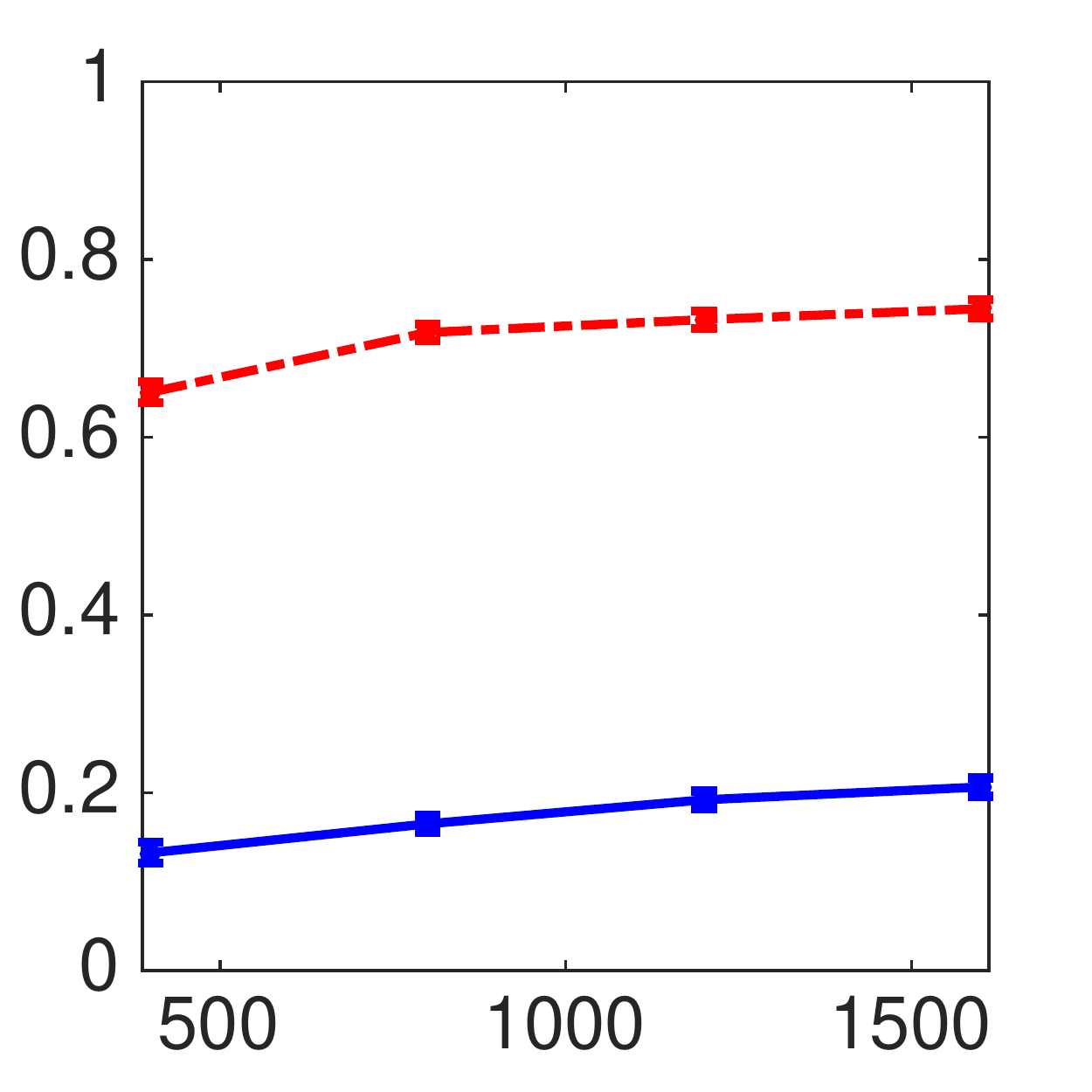}
        \caption{$n\rho = 60$}
    \end{subfigure}
    \caption{Misclassification rates for $\hat{\mathcal{C}}_1,\ldots,\hat{\mathcal{C}}_6$ found by semidefinite programming (blue line, solid), and for estimates of $z$ found by spectral clustering of $A$ (red line, dotted), under the overlapping community model \eqref{eq: overlapping} with $K=9$, $k=3$, $n = \{400, 800, 1200, 1600\}$, expected average degree $n\rho = \{30, 40, 50, 60\}$, and unbalanced class sizes. 100 simulations per trial, standard errors shown.} \label{fig: batch overlapping}
\end{figure}

\subsection{Latent Space Models} \label{sec: hoff}

We consider a latent space model, reminiscent of \cite{hoff2002latent}, in which each node is assigned a latent coordinate vector $y_1,\ldots,y_n \in [0,1]^d$. Conditional on $\{y_i\}_{i=1}^n$, each dyad is independent Bernoulli with log odds given by
\begin{equation}\label{eq: hoff}
\textrm{log odds }A_{ij} = -\|y_i - y_j\|/\sigma,
\end{equation}
where $\sigma \geq 0$ is a bandwidth parameter. In general, \eqref{eq: hoff} is not equivalent to a blockmodel.

Let $D\in \mathbb{R}^{n \times n}$ denote the matrix of squared distances between the latent coordinates, given by
\begin{align*}
    D_{ij} &= \|y_i - y_j\|^2.
\end{align*}
It is known that the first $d$ eigencoordinates of $(I - 11^T/n)D(I- 11^T/n)$ will recover $y_1,\ldots,y_n$ up to a unitary transformation.

To estimate $D$, we will approximate \eqref{eq: hoff} by a blockmodel with $K = (2k)^d$ classes, where each class represents a coordinate in $\mathbb{R}^d$, so that $\hat{\theta}$ equals
\begin{equation}\label{eq: hoff sbm}
\operatorname{logit} \hat{\theta}_{ab} = -\delta(\gamma_a,\gamma_b)/\sigma,
\end{equation}
where $\gamma_1,\ldots,\gamma_K \in \mathbb{R}^d$, and $\delta$ is a distance metric. In order for $\hat{\theta}$ to belong to an association scheme, we choose $\gamma_1,\ldots,\gamma_K$ extending beyond $[0,1]^d$ to form a grid in $[0,2]^d$, and choose $\delta$ to be a toric distance. That is, given $a \in [K]$, let $(a_1,\ldots,a_d)$ denote its $(2k)$-ary representation, and let $\gamma_1,\ldots,\gamma_K$ be given by
\[ \gamma_{a} = \left(\frac{a_1}{k}, \ldots, \frac{a_d}{k}\right), \qquad a \in [K],\]
and let $\delta:[0,2]^d \times [0,2]^d \mapsto \mathbb{R}$ equal the distance on a torus of circumference $2$,
\[ \delta(\gamma_a, \gamma_b) = \left[ \sum_{j=1}^d \min( |\gamma_{aj} - \gamma_{bj}|, 2 - |\gamma_{aj} - \gamma_{bj}|)^2\right]^{1/2}, \]
where $\gamma_{aj}$ is the $j$th element of the vector $\gamma_a$. Since $\hat{\theta}_{ab}$ depends on $\gamma_a$ and $\gamma_b$ only through their element-wise differences $|\gamma_{aj} - \gamma_{bj}|$, for $j=1,\ldots,d$, it follows that $\hat{\theta}$ can be written as a weighted sum of matrices
\[ \hat{\theta} = \sum_{j_1=0}^{2k-1} \cdots \sum_{j_d=0}^{2k-1} \gamma_{j_1 \cdots j_d} C^{(j_1)} \otimes \cdots \otimes C^{(j_d)},\]
where $C^{(j)} \in \{0,1\}^{2k \times 2k}$ for $j=0,\ldots,2k-1$ is the circulant matrix given by
\[ C^{(j)}_{ab} = \begin{cases} 1 & \textrm{ if } \min(|a-b|, 2k - |a-b|) = j \\ 0 & \textrm{ otherwise.} \end{cases} \]
By manual inspection, it can be seen that $\mathcal{C} = \{C^{(0)}, \ldots, C^{(2k-1)}\}$ satisfies the requirements of an association scheme, and hence that also $\mathcal{C}\otimes \cdots \otimes \mathcal{C}$ is an association scheme as well, with $\ell \leq (2k)
^d = K$. (See \cite[Chapters 1-3]{bailey2004association} for a complete treatment.)


Let $\hat{X}$ denote the solution to the semidefinite program \eqref{eq: sdp} with $\hat{\theta}$ given by \eqref{eq: hoff sbm}. To estimate $D$, let $\hat{D} \in \mathbb{R}^{n \times n}$ equal the MAP-style estimate under $\hat{X}$,
\begin{equation}\label{eq: Dhat}
\hat{D}_{ij} = \arg \max_{x \in \mathbb{R}} \sum_{a=1}^K \sum_{b=1}^K \hat{X}_{ab}^{(ij)} 1\{\delta(\gamma_a,\gamma_b)^2 = x\},
\end{equation}
and let $\tilde{D} \in \mathbb{R}^{n \times n}$ denote the randomized estimate
\begin{equation}\label{eq: Dtilde}
 \tilde{D}_{ij} = \delta(\gamma_a,\gamma_b)^2 \textrm{ with probability } X^{(ij)}_{ab}.
 \end{equation}
Given $\hat{D}$, we will take the first $d$ eigencoordinates of $(I - 11^T/n)\hat{D}(I - 11^T/n)$ to estimate $y_1,\ldots,y_n$ (or similarly using $\tilde{D}$).

Corollary \ref{cor: th2} bounds the error between the randomized estimate $\tilde{D}$ and the true distances $D$, relative to rounding the coordinates $\{y_i\}_{i=1}^n$ to their closest points on a grid over $[0,1]^d$:
\begin{corollary}\label{cor: th2}
Let $L:\mathbb{R}\times \mathbb{R} \mapsto \mathbb{R}$ denote the loss function
\[L(x,x') = KL\left(\frac{e^{-x/\sigma}}{1+e^{-x/\sigma}}, \frac{e^{-x'/\sigma}}{1+e^{-x'/\sigma}}\right),\]
and let $\eta_1,\ldots,\eta_{k^d} \in \mathbb{R}$ form a grid on $[0,1]^d$,
\[ \eta_{a} = \left(\frac{a_1}{k}, \ldots, \frac{a_d}{k}\right), \qquad a \in [k^d],\]
where $(a_1,\ldots,a_d)$ denotes the $k$-ary representation of $a \in [k^d]$.
Then
\begin{align*}
\frac{1}{n^2\rho} \sum_{i=1}^n \sum_{j=1}^n L\left(\sqrt{D_{ij}},\sqrt{\tilde{D}_{ij}}\right) &\leq \min_{z \in [k^d]^n} \frac{1}{n^2\rho} \sum_{i=1}^n \sum_{j=1}^n L\left(\sqrt{D_{ij}}, \|\eta_{z_i} - \eta_{z_j}\|\right) \\
& \hskip.5cm {} + O_P\left(\frac{1}{\sqrt{n\rho}}\right)
\end{align*}
\end{corollary}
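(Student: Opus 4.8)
The plan is to deduce Corollary~\ref{cor: th2} as a direct translation of Theorem~\ref{th: Ptilde} under the latent-space parametrization \eqref{eq: hoff}, \eqref{eq: hoff sbm}. The first step is to note that the logistic link ties the squared-distance quantities to Bernoulli probabilities: since $D_{ij} = \|y_i - y_j\|^2$, the generative matrix satisfies $P_{ij} = e^{-\sqrt{D_{ij}}/\sigma}/(1+e^{-\sqrt{D_{ij}}/\sigma})$, and by \eqref{eq: hoff sbm} the block parameter is $\hat\theta_{ab} = e^{-\delta(\gamma_a,\gamma_b)/\sigma}/(1+e^{-\delta(\gamma_a,\gamma_b)/\sigma})$. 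Under the natural coupling in which the randomized estimates $\tilde D$ (from \eqref{eq: Dtilde}) and $\tilde P$ (from \eqref{eq: Phat randomized}) draw the same index pair $(a,b)$ with probability $\hat X^{(ij)}_{ab}$, one gets $\sqrt{\tilde D_{ij}} = \delta(\gamma_a,\gamma_b)$ and hence $\tilde P_{ij} = e^{-\sqrt{\tilde D_{ij}}/\sigma}/(1+e^{-\sqrt{\tilde D_{ij}}/\sigma})$. Comparing with the definition of $L$, this yields the pointwise identity $L(\sqrt{D_{ij}},\sqrt{\tilde D_{ij}}) = KL(P_{ij},\tilde P_{ij})$, so that the left-hand side of the corollary coincides with the left-hand side of Theorem~\ref{th: Ptilde} (all randomness, in $A$ and in the rounding, being absorbed by the $O_P$).

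Next I would handle the oracle term on the right. The key geometric fact is that the toric distance $\delta$ on the torus of circumference $2$ agrees with the Euclidean distance on points whose coordinates lie in $[0,1]$: for $u,v\in[0,1]$, $|u-v|\le 1$ forces $\min(|u-v|,2-|u-v|)=|u-v|$. Consequently the grid $\eta_1,\dots,\eta_{k^d}\in[0,1]^d$ is a sub-grid of $\gamma_1,\dots,\gamma_K$ (each $\eta_a$ equals some $\gamma_{a'}$), and for such labels $\delta(\gamma_{a'},\gamma_{b'}) = \|\eta_a-\eta_b\|$. Combined with the logit identities above, $KL(P_{ij},\hat\theta_{a'b'}) = L(\sqrt{D_{ij}},\|\eta_a-\eta_b\|)$ whenever $a',b'$ are the embedded images of $\eta$-grid indices. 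Since the minimum over $\hat z\in[K]^n$ in Theorem~\ref{th: Ptilde} is taken over a superset of the embedded copy of $[k^d]^n$, monotonicity of the minimum under enlarging the label set gives
\[
\frac{1}{n^2\rho}\min_{\hat z\in[K]^n}\sum_{i,j} KL(P_{ij},\hat\theta_{\hat z_i\hat z_j}) \;\le\; \frac{1}{n^2\rho}\min_{z\in[k^d]^n}\sum_{i,j} L\bigl(\sqrt{D_{ij}},\|\eta_{z_i}-\eta_{z_j}\|\bigr).
\]
Chaining this with the left-hand identity and the conclusion of Theorem~\ref{th: Ptilde} produces exactly the claimed bound, with the same $O_P(1/\sqrt{n\rho})$ remainder.

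The only point requiring care is verifying that Assumptions~\ref{as: rho} and \ref{as: thetahat} are satisfied so that Theorem~\ref{th: Ptilde} may be invoked. Since $\operatorname{logit}\hat\theta_{ab} = -\delta(\gamma_a,\gamma_b)/\sigma$ and $0\le\delta(\gamma_a,\gamma_b)\le\sqrt d$ (the toric diameter of the grid in $[0,2]^d$), one has $e^{-\sqrt d/\sigma}\le \hat\theta_{ab}/(1-\hat\theta_{ab})\le 1$ uniformly in $a,b$; for a model \eqref{eq: hoff} with $\rho$ bounded below by a positive constant, this is consistent with $\rho/c\le\hat\theta_{ab}/(1-\hat\theta_{ab})\le c\rho$ for some $c=c(d,\sigma)$, while Assumption~\ref{as: rho} holds trivially. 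I expect no genuine obstacle: the corollary is essentially a dictionary translation, its content lying entirely in the two elementary identities (the logit coupling of $(\tilde D,\tilde P)$ and the equality $\delta=\|\cdot\|$ on $[0,1]^d$) together with the monotonicity of the minimum under the inclusion $[k^d]^n\hookrightarrow[K]^n$.
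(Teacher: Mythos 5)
Your proposal is correct and follows essentially the same route as the paper: apply Theorem \ref{th: Ptilde} via the logit identity $L(\sqrt{D_{ij}},\sqrt{\tilde D_{ij}})=KL(P_{ij},\tilde P_{ij})$, restrict the minimization from $[K]^n$ to the embedded sub-grid $[k^d]^n$, and use that the toric distance $\delta$ coincides with the Euclidean distance on $[0,1]^d$. Your explicit verification of Assumptions \ref{as: rho} and \ref{as: thetahat} is a detail the paper leaves implicit, but it does not change the argument.
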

\begin{proof}
It holds that
\begin{align*}
\frac{1}{n^2\rho} \sum_{i=1}^n \sum_{j=1}^n L\left(\sqrt{D_{ij}},\sqrt{\tilde{D}_{ij}}\right) &\leq \min_{z \in [K]^n} \frac{1}{n^2\rho} \sum_{i=1}^n \sum_{j=1}^n L\left(\sqrt{D_{ij}}, \delta(\gamma_{z_i},\gamma_{z_j})\right) + O_P\left(\frac{1}{\sqrt{n\rho}}\right) \\
& \leq \min_{z \in [k^d]^n} \frac{1}{n^2\rho} \sum_{i=1}^n \sum_{j=1}^n L\left(\sqrt{D_{ij}}, \delta(\eta_{z_i},\eta_{z_j})\right) + O_P\left(\frac{1}{\sqrt{n\rho}}\right) \\
& \leq \min_{z \in [k^d]^n} \frac{1}{n^2\rho} \sum_{i=1}^n \sum_{j=1}^n L\left(\sqrt{D_{ij}}, \|\eta_{z_i} - \eta_{z_j}\|\right) + O_P\left(\frac{1}{\sqrt{n\rho}}\right),
\end{align*}
where the first inequality holds by Theorem \ref{th: Ptilde} and definition of $L$; the second inequality holds because the minimization is over a strictly smaller set than in the previous line; and the last inequality holds because $\delta(x,x') = \|x - x'\|$ for all $(x,x') \in [0,1]^d \times [0,1]^d$.
\end{proof}



Figure \ref{fig: hoff} shows latent coordinates $y_1,\ldots,y_n \in \mathbb{R}^2$ arranged in a circle, from which an adjacency matrix was generated by \eqref{eq: hoff} with $n=500$ and $n\rho = 20$. The figure also shows the estimated coordinates $\hat{y}$ derived from the randomized $\tilde{D}$ given by \eqref{eq: Dtilde}, from the MAP-style $\hat{D}$ given by \eqref{eq: Dhat}, and by applying the USVT method of \cite{chatterjee2015matrix} to the adjacency matrix $A$, in which a spectral estimate of $P$ is constructed from $A$, and then inverted to form an estimate of $D$. In this instance, both $\tilde{D}$ and $\hat{D}$ yield estimates $\hat{y}$ that are similar, and are substantially more accurate than the USVT approach which failed due to the sparsity of $A$. Figure \ref{fig: hoff2} shows a different configuration for $y_1,\ldots,y_n$, with similar results. 

Figure \ref{fig: batch hoff} shows the average simulated estimation accuracy using $\hat{D}$, for a range of values for the network size $n$ and average degree $n\rho$. For comparison, the performance of the spectral USVT method is shown as well. We see that the estimation error for $\hat{D}$ is near-constant when the average degree is fixed. In contrast, the estimation error for the spectral USVT method worsens with the sparsity of $A$. This is exemplified by subplot (d), in which the USVT method performs well for small networks but degrades severely as $n$ increases.

\begin{figure}
    \centering
    \begin{subfigure}[b]{0.24\textwidth}
        \includegraphics[width=\textwidth]{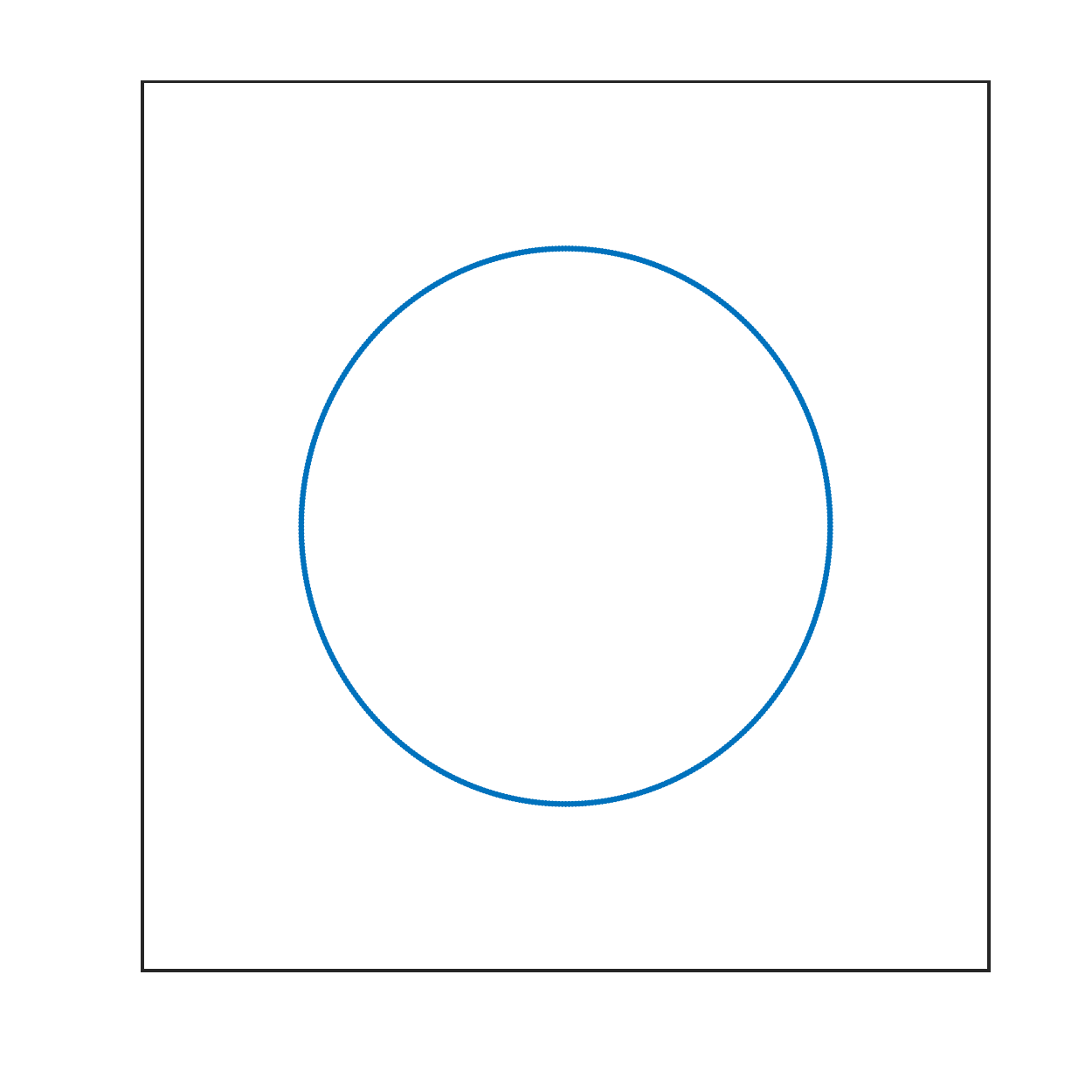}
        \caption{$y$ }
    \end{subfigure}
    \begin{subfigure}[b]{0.24\textwidth}
        \includegraphics[width=\textwidth]{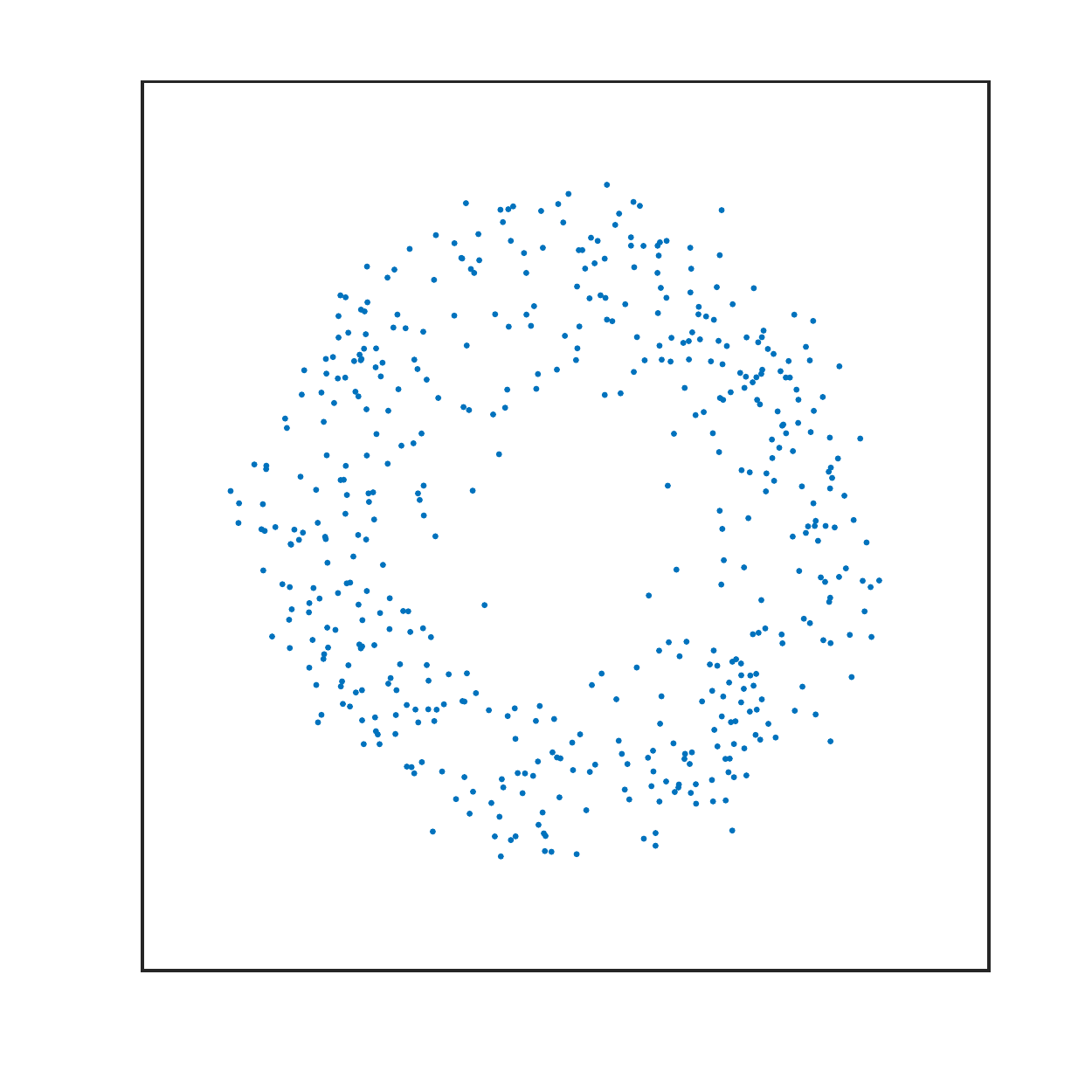}
        \caption{$\hat{y}$ from $\tilde{D}$}
    \end{subfigure}	
    \begin{subfigure}[b]{0.24\textwidth}
        \includegraphics[width=\textwidth]{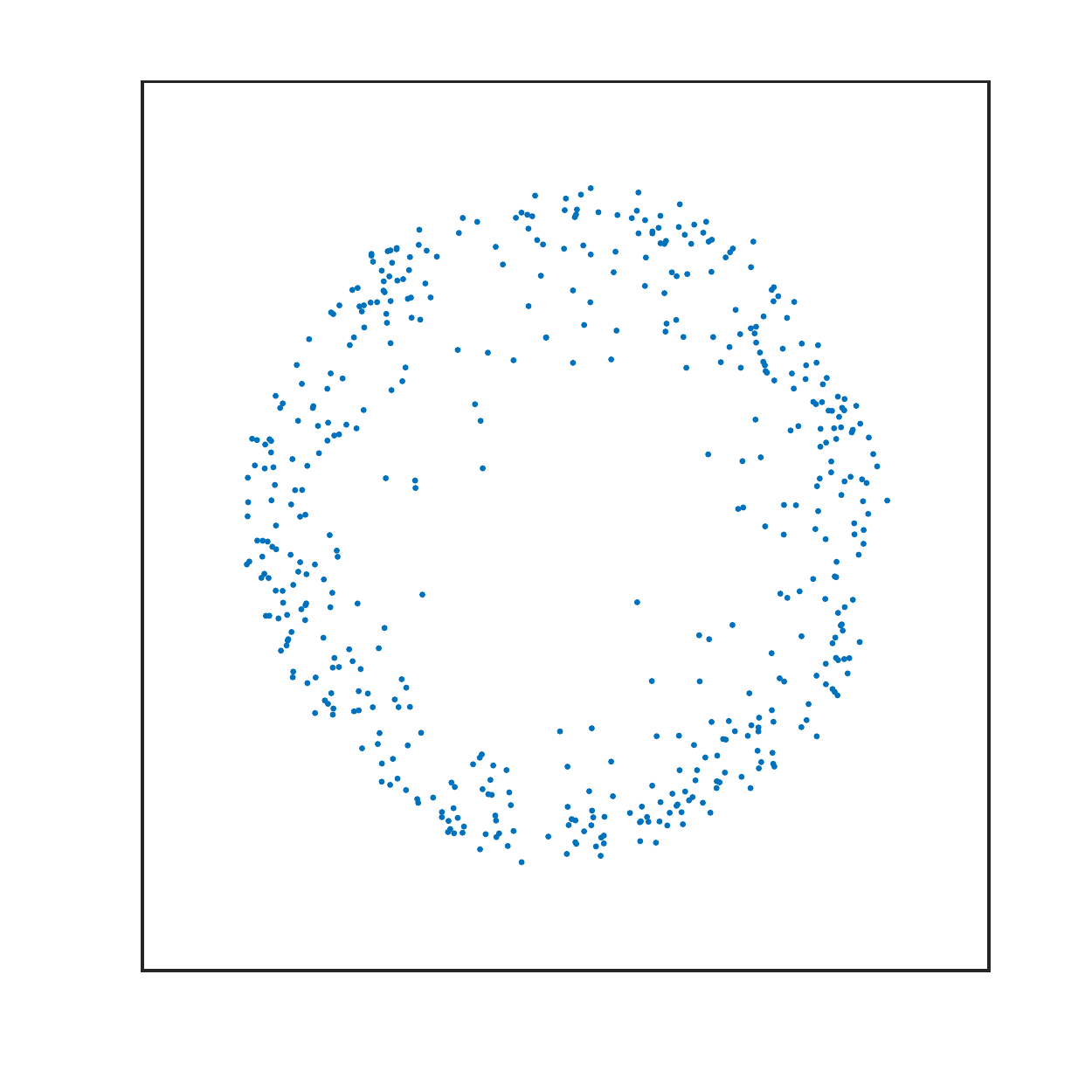}
        \caption{$\hat{y}$ from $\hat{D}$}
    \end{subfigure}	
    \begin{subfigure}[b]{0.24\textwidth}
        \includegraphics[width=\textwidth]{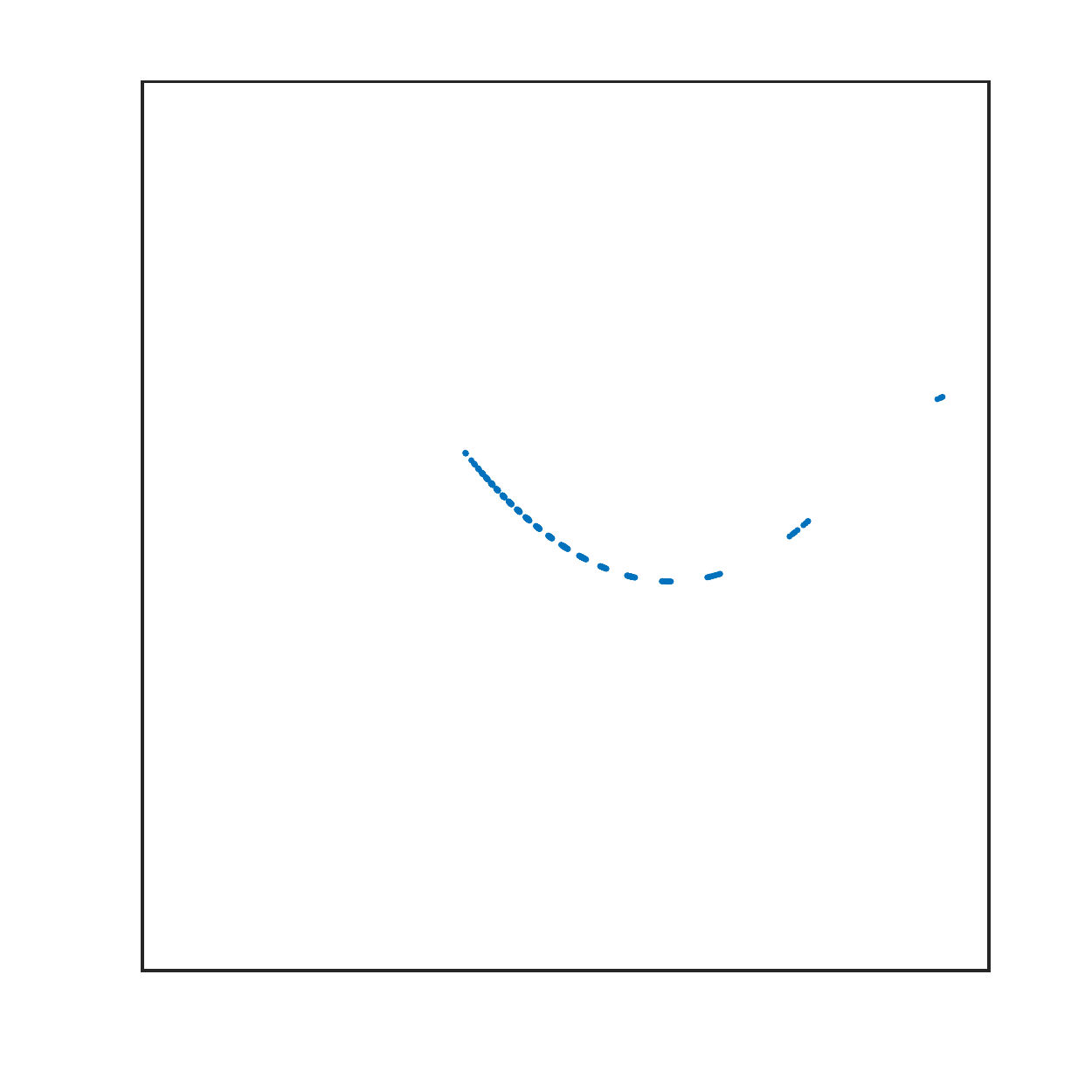}
        \caption{$\hat{y}$, USVT}
    \end{subfigure}
    \caption{Latent space model \eqref{eq: hoff}, with $n=500$, $n\rho = 20$, and latent coordinates $y_1,\ldots,y_n \in \mathbb{R}^2$ arranged in a circle. (a) Latent coordinates $y$. (b) estimated $\hat{y}$ using randomized $\tilde{D}$ from \eqref{eq: Dtilde}. (c) estimated $\hat{y}$ using MAP-style $\hat{D}$ from \eqref{eq: Dhat}. (d) estimated $\hat{y}$, using USVT spectral method \cite{chatterjee2015matrix} directly on  $A$. RMS errors for (b), (c), and (d) were $0.08, 0.07$, and $0.25$ respectively.} \label{fig: hoff}
\end{figure}

\begin{figure}
    \centering
    \begin{subfigure}[b]{0.24\textwidth}
        \includegraphics[width=\textwidth]{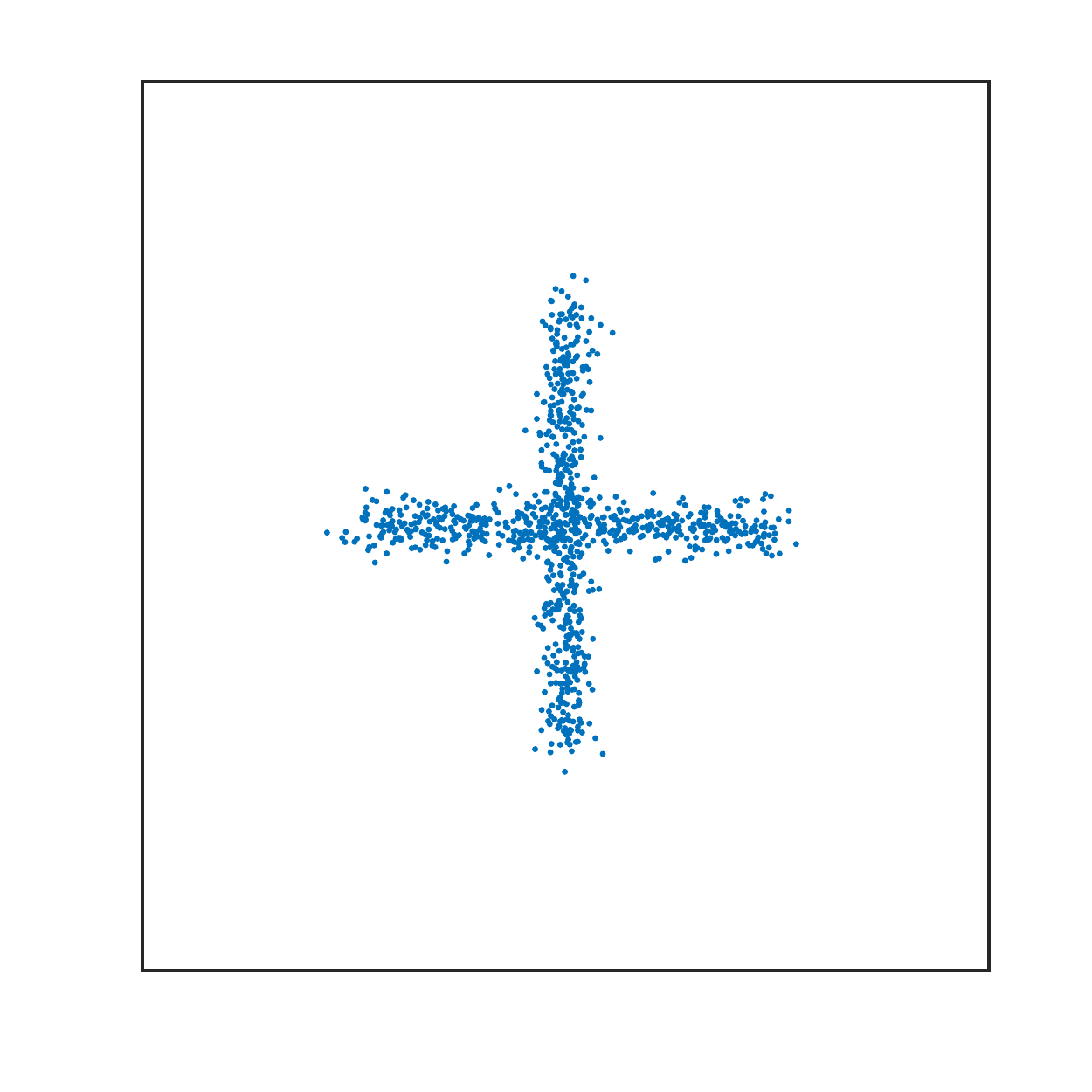}
        \caption{$y$ }
    \end{subfigure}
    \begin{subfigure}[b]{0.24\textwidth}
        \includegraphics[width=\textwidth]{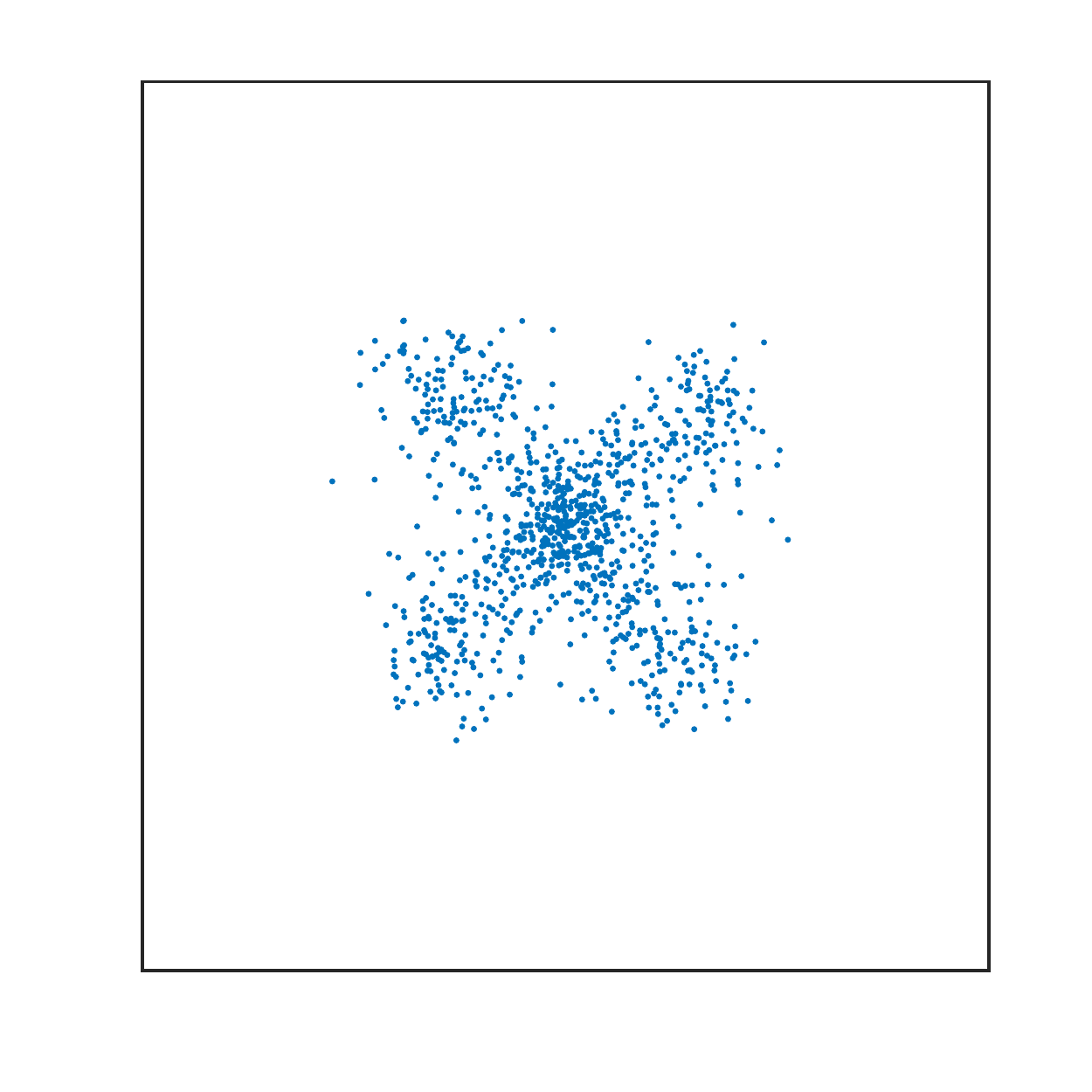}
        \caption{$\hat{y}$ from $\tilde{D}$}
    \end{subfigure}	
    \begin{subfigure}[b]{0.24\textwidth}
        \includegraphics[width=\textwidth]{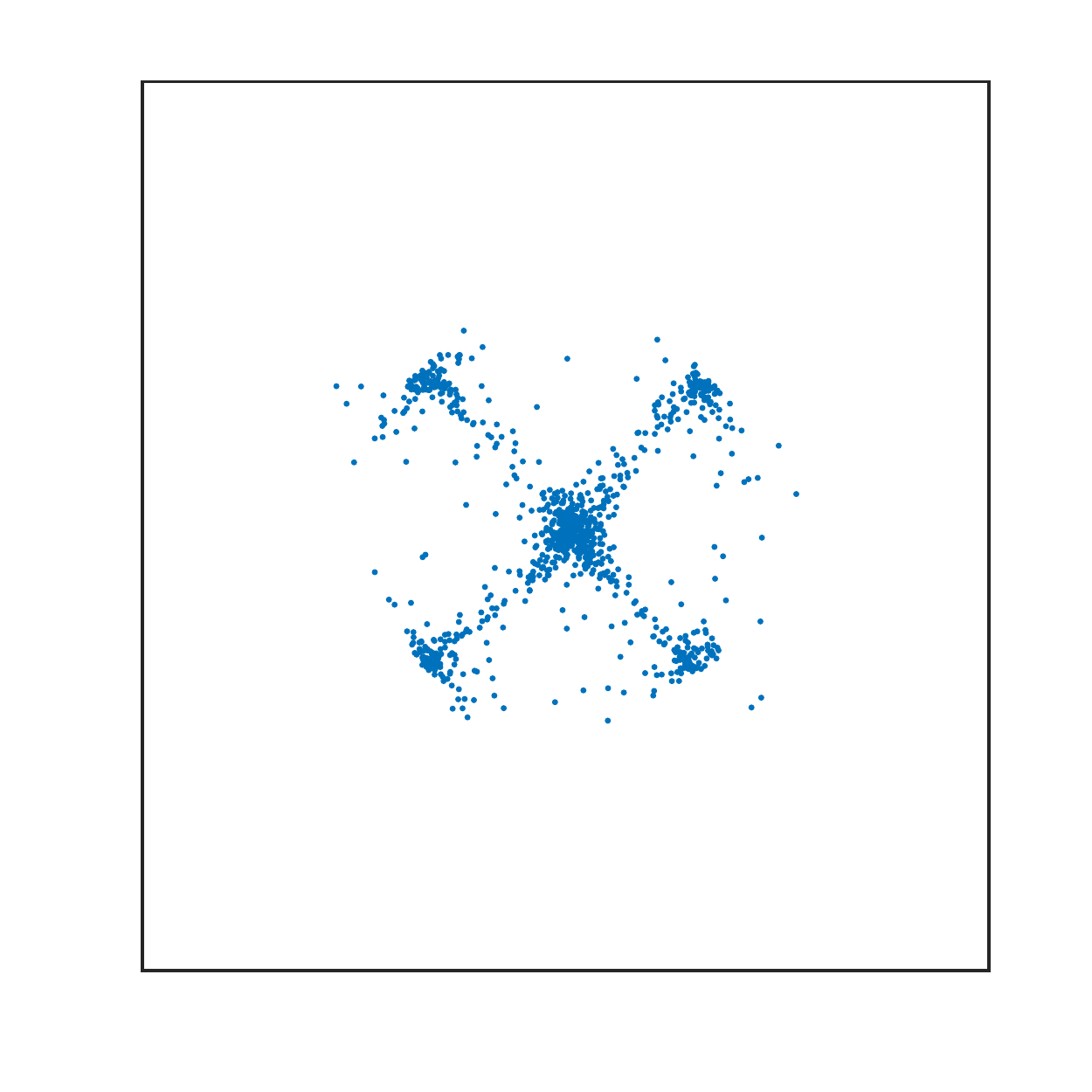}
        \caption{$\hat{y}$ from $\hat{D}$}
    \end{subfigure}	
    \begin{subfigure}[b]{0.24\textwidth}
        \includegraphics[width=\textwidth]{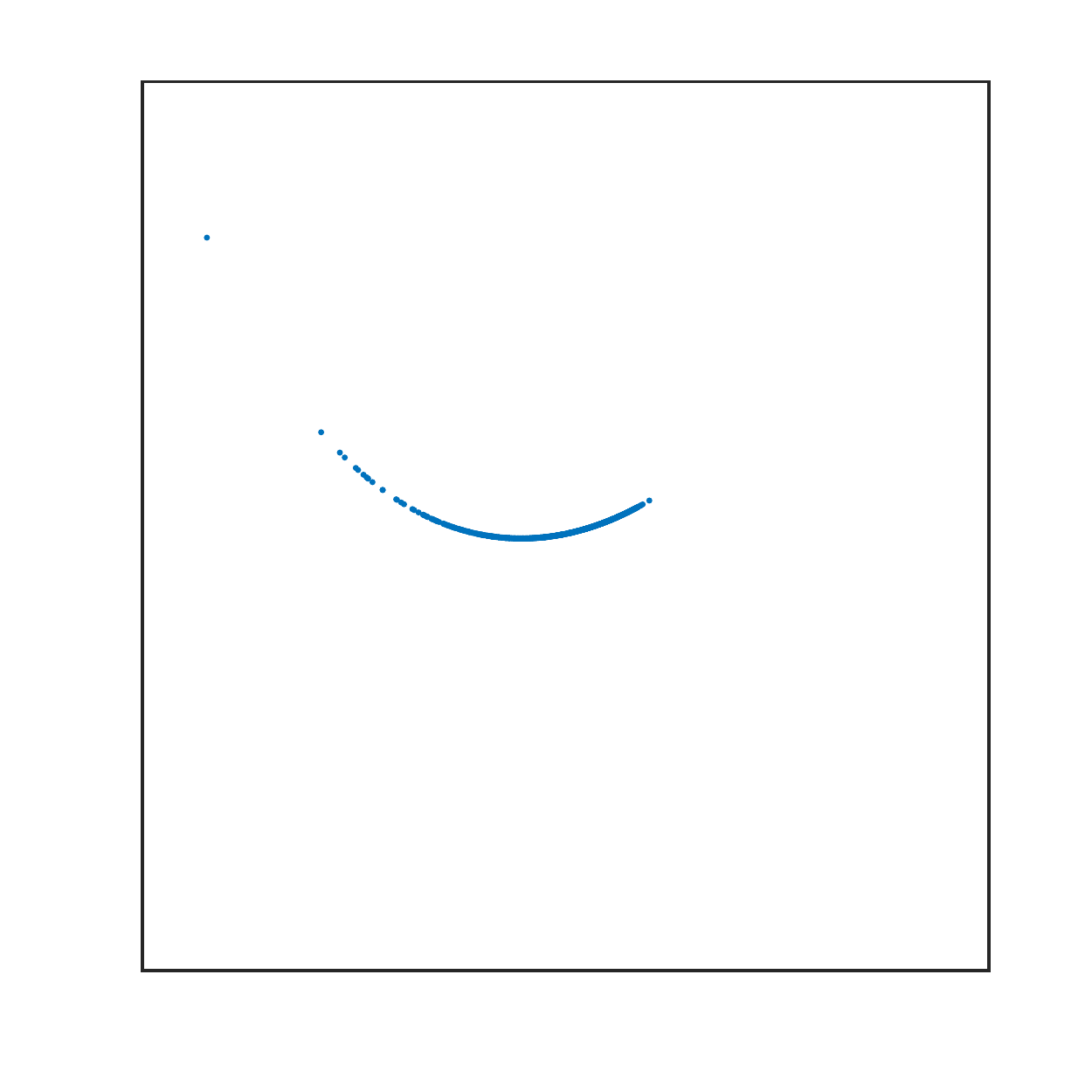}
        \caption{$\hat{y}$, USVT}
    \end{subfigure}
    \caption{Latent space model \eqref{eq: hoff}, with $n=1000$, $n\rho = 140$, and latent coordinates $y_1,\ldots,y_n \in \mathbb{R}^2$ arranged in a cross. (a) Latent coordinates $y$. (b) estimated $\hat{y}$ using randomized $\tilde{D}$ from \eqref{eq: Dtilde}. (c) estimated $\hat{y}$ using MAP-style $\hat{D}$ from \eqref{eq: Dhat}. (d) estimated $\hat{y}$, using USVT spectral method \cite{chatterjee2015matrix} directly on  $A$. RMS errors for (b), (c), and (d) were $0.044, 0.040$, and $0.11$ respectively.} \label{fig: hoff2}
\end{figure}

\begin{figure}
    \centering
    \begin{subfigure}[b]{0.24\textwidth}
        \includegraphics[width=\textwidth]{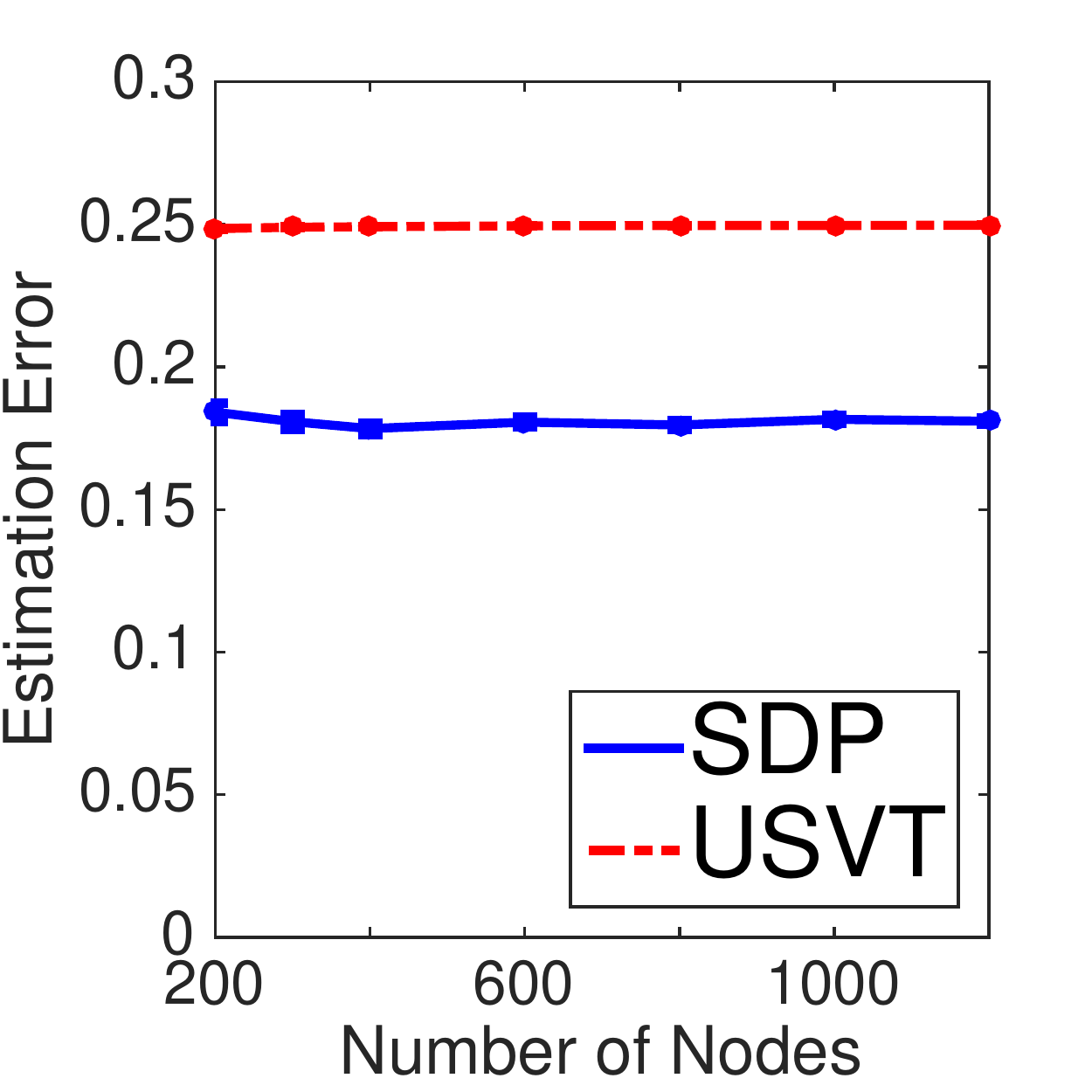}
        \caption{$n\rho = 5$ }
    \end{subfigure}
    \begin{subfigure}[b]{0.24\textwidth}
        \includegraphics[width=\textwidth]{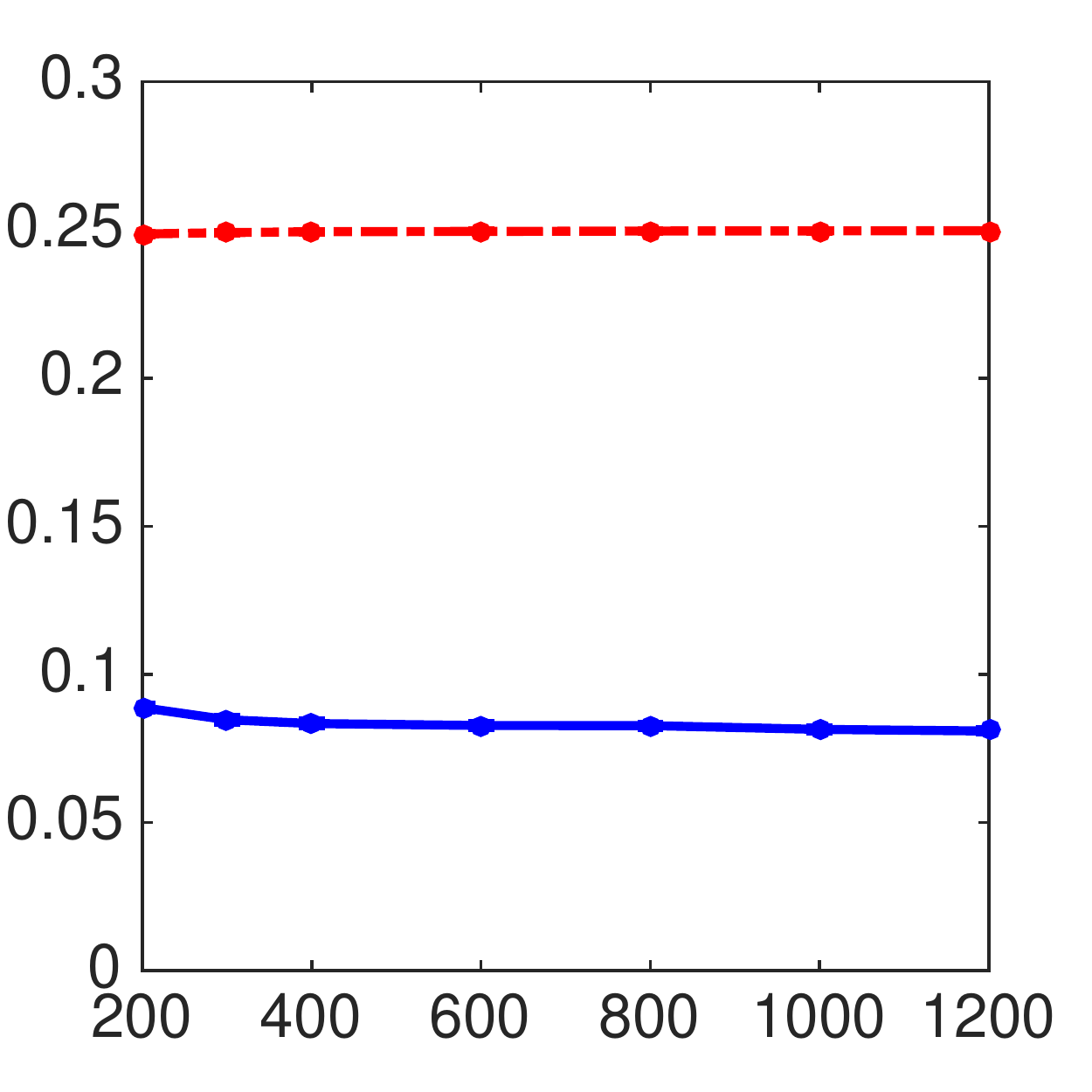}
        \caption{$n\rho = 15$ }
    \end{subfigure}
    \begin{subfigure}[b]{0.24\textwidth}
        \includegraphics[width=\textwidth]{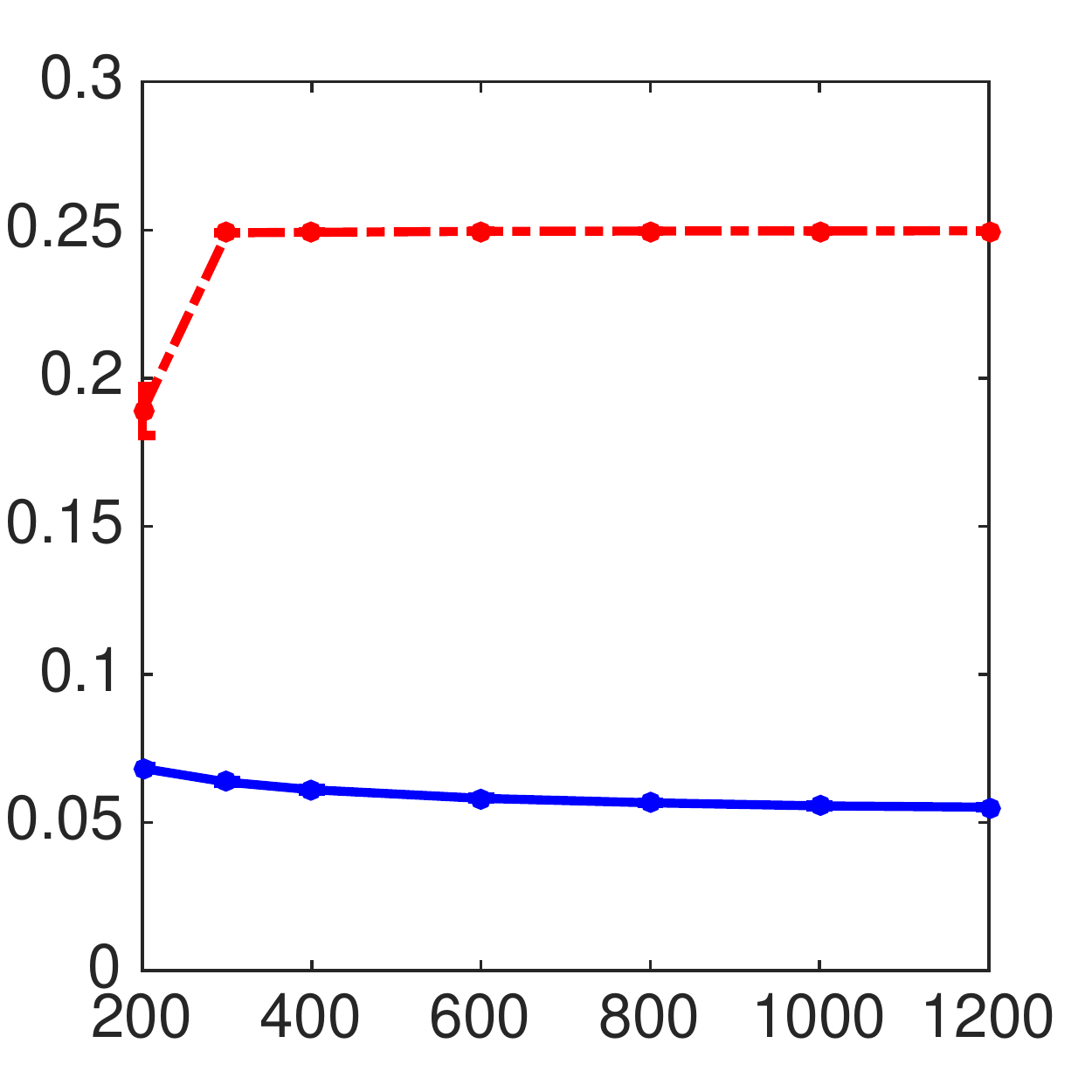}
        \caption{$n\rho = 25$ }
    \end{subfigure}
    \begin{subfigure}[b]{0.24\textwidth}
        \includegraphics[width=\textwidth]{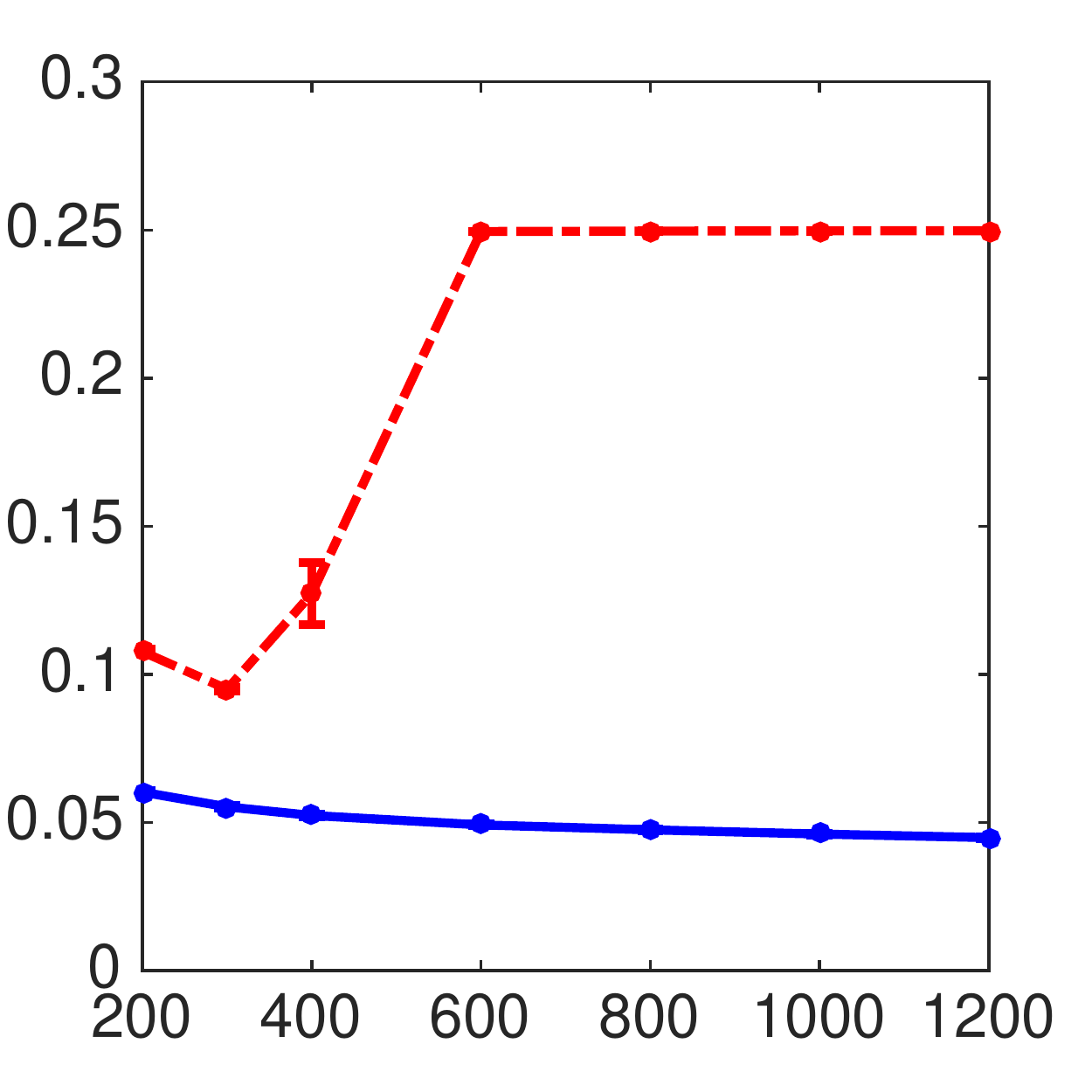}
        \caption{$n\rho = 35$ }
    \end{subfigure}
    \caption{Estimation error for $\hat{y}$ found by semidefinite programing (blue line, solid) vs. USVT method on $A$ (red line, dotted), under the latent space model \eqref{eq: hoff} with $n = \{200, 300, 400, 600, 800, 1000, 1200\}$, expected average degree $n\rho = \{5, 15, 25, 35\}$, and latent coordinates $y_1,\ldots,y_n \in \mathbb{R}^2$ arranged in a circle. 100 simulations per trial, standard errors shown. } \label{fig: batch hoff}
\end{figure}

\appendix

\section{Proof of Theorems \ref{th: Ptilde} and \ref{th: sbm}} \label{sec: appendix A}


\subsection{Intermediate Results} \label{sec: appendix 1}

We first present intermediate results that will be used in the proof of Theorems \ref{th: Ptilde} and \ref{th: sbm}. 
Let $X^*$ denote any solution to the idealized problem
\[ \textrm{maximize } \ip{\bar{F},X} \textrm{ over all } X \in \mathcal{X}.\]

Lemma \ref{le: main} is the main technical result, and states that for the general matrix model \eqref{eq: bernoulli}, $\hat{X}$ nearly optimizes the desired objective function $\bar{F}$, even though only the noisy proxy $F$ is available. Its proof closely follows the approach of \cite{guedon2015community}.

\begin{lemma} \label{le: main} 
Let Assumptions \ref{as: rho} and \ref{as: thetahat} hold. Then for some $C_1,C_2 > 0$ it holds for all $n$ that
\[ \mathbb{P}\left\{\frac{1}{n^2\rho} \ip{\bar{F},X^*-\hat{X}} \geq \frac{C_1}{\sqrt{n \rho}}\right\} = e^{-C_2n},\]
where $C_1$ and $C_2$ depend only on $K$ and $c$.
\end{lemma}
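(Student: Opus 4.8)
The plan is to bound the suboptimality of $\hat X$ with respect to the idealized objective $\bar F$ by controlling the "noise" matrix $F - \bar F$ over the feasible set $\mathcal{X}$. Since $\hat X$ maximizes $\langle F, \cdot\rangle$ over $\mathcal{X}$ and $X^*$ maximizes $\langle \bar F, \cdot\rangle$ over $\mathcal{X}$, the standard decomposition gives
\[
\langle \bar F, X^* - \hat X\rangle = \langle \bar F - F, X^* - \hat X\rangle + \langle F, X^* - \hat X\rangle \le \langle F - \bar F, \hat X - X^*\rangle \le 2 \sup_{X \in \mathcal{X}} \bigl|\langle F - \bar F, X - X^*\rangle\bigr|,
\]
so it suffices to show that $\frac{1}{n^2\rho}\sup_{X,X' \in \mathcal{X}} \langle F - \bar F, X - X'\rangle \le C_1/\sqrt{n\rho}$ with probability $1 - e^{-C_2 n}$. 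First I would make the entrywise structure explicit: by \eqref{eq: F theorem} and \eqref{eq: Fbar theorem}, $(F - \bar F)^{(ij)}_{ab} = (A_{ij} - P_{ij})\log\frac{\hat\theta_{ab}}{1-\hat\theta_{ab}}$, so if I write $W = A - P$ (a symmetric matrix of centered, independent-above-diagonal Bernoulli variables) and use Assumption \ref{as: thetahat} to bound $|\log\frac{\hat\theta_{ab}}{1-\hat\theta_{ab}}| \le \log(c/\rho) + $ const $\lesssim \log(1/\rho)$, the inner product becomes a weighted sum $\sum_{ij} W_{ij} \sum_{ab}\log\frac{\hat\theta_{ab}}{1-\hat\theta_{ab}}(X - X')^{(ij)}_{ab}$.

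Following \cite{guedon2015community}, the key is the constraint $\sum_{ab} X^{(ij)}_{ab} = 1$ together with $X \ge 0$: this means each block $X^{(ij)}$ is a probability vector on $[K]^2$, so $Z_{ij} := \sum_{ab} \log\frac{\hat\theta_{ab}}{1-\hat\theta_{ab}} X^{(ij)}_{ab}$ is a convex combination of the $K^2$ values $\{\log\frac{\hat\theta_{ab}}{1-\hat\theta_{ab}}\}$, hence $Z$ ranges (as $X$ ranges over $\mathcal{X}$) within a bounded, low-complexity set — essentially $Z_{ij} \in [\log(\rho/c)-{}$const$, \log(c\rho)]$, an interval of length $O(1)$ centered near $\log\rho$, after factoring out the common shift. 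The supremum over $X - X'$ then reduces to controlling $\langle W, Z\rangle = \sum_{ij} W_{ij} Z_{ij}$ where $Z$ lies in a cube of side $O(1)$ (in the shifted coordinates) — i.e., to bounding $\|W\|_{\infty\to 1}$-type quantities, or more precisely $\sup_{Z \in [-1,1]^{n\times n}, Z\text{ symmetric}} \langle W, Z\rangle$ scaled by the $O(1)$ range. This is exactly the Grothendieck-norm / cut-norm bound for a sparse random matrix: by the results invoked in \cite{guedon2015community} (a Bernstein/Bennett argument combined with an $\varepsilon$-net, or a bounded-difference concentration argument on $\|W\|_{\text{cut}}$), one has $\sup_{Z} \langle W, Z\rangle \le C n^{3/2}\sqrt{\rho}$ with probability $1 - e^{-C_2 n}$, using Assumption \ref{as: rho} ($\rho \ge 1/n$) to ensure the deviation term dominates. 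Dividing by $n^2 \rho$ yields $C_1/\sqrt{n\rho}$.

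The main obstacle — and the part requiring the most care — is the concentration bound for the cut-norm-type quantity $\sup_Z \langle W, Z \rangle$ in the genuinely sparse regime $\rho \asymp 1/n$, where naive operator-norm bounds on $W$ fail and one must instead exploit that the test matrices $Z$ have bounded \emph{entries} (an $\ell_\infty$ constraint), not bounded operator norm. I would handle this by quoting the relevant lemma from \cite{guedon2015community} essentially verbatim: their analysis (via Grothendieck's inequality relating the cut norm to the SDP-relaxed quantity, or via a direct net argument on $\{\pm1\}^n \times \{\pm 1\}^n$ with a Bernstein bound for each fixed pair of sign vectors, union-bounded over $2^{2n}$ choices and absorbing the $e^{2n}$ into $e^{-C_2 n}$ since each term is $e^{-c n^2 \rho \cdot t^2}$-small) gives precisely the needed $O(n^{3/2}\sqrt\rho)$ bound with exponentially small failure probability. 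The only adaptations are the factor $\log(1/\rho)$ from the log-odds weights — which is absorbed because the range of $Z_{ij}$ is $O(1)$ once the additive constant $\log\hat\theta_{ab}/(1-\hat\theta_{ab}) \approx \log\rho$ is pulled out and noting that such a uniform additive shift contributes $\log\rho \cdot \sum_{ij} W_{ij}(X-X')^{(ij)}$-weight, which also vanishes since block sums of $X - X'$ are zero — and tracking that the constants depend only on $K$ (number of values) and $c$ (from Assumption \ref{as: thetahat}), not on $n$ or $\rho$.
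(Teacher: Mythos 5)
Your overall architecture matches the paper's proof: the optimality inequality $\ip{F,X^*-\hat X}\le 0$, the removal of the common $\log\rho$ shift (the paper does this by re-centering $\bar F$ into $\tilde F^{(ij)}=\bar F^{(ij)}+(A_{ij}-P_{ij})\log\rho\cdot\obf\obf^T$, which is equivalent to your observation that a uniform additive shift pairs to zero against $X-X'$ because all block sums equal one), Grothendieck's inequality, and a Bernstein bound united over sign vectors. The one step that, taken literally, breaks the proof is the claimed reduction to bounding $\sup\{\ip{W,Z}: Z\in[-1,1]^{n\times n},\ Z \text{ symmetric}\}$ together with the assertion that this supremum is $O(n^{3/2}\sqrt{\rho})$. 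That supremum is attained at $Z_{ij}=\operatorname{sign}(W_{ij})$ and equals $\sum_{ij}|A_{ij}-P_{ij}|$, which concentrates around $2n^2\rho(1-\rho)$; after dividing by $n^2\rho$ this yields only $O(1)$, missing the target by exactly the factor $\sqrt{n\rho}$ that the lemma is supposed to gain. The set of $n\times n$ matrices $Z$ reachable from feasible $X$ is \emph{not} the full entrywise cube: collapsing each $K\times K$ block of $X$ to the scalar $Z_{ij}$ discards the constraint $X\succeq 0$, and it is precisely that constraint (via $\mathcal{X}\subset\mathcal{M}$, which holds because the diagonal entries of a feasible $X$ are at most $1$, combined with Grothendieck's inequality \eqref{eq: grothendeick}) that replaces the cube by the set of rank-one sign matrices $st^T$ with $s,t\in\{-1,1\}^{nK}$ --- only $2^{2nK}$ of them, so that a per-pair Bernstein tail of order $e^{-cnC_1^2}$ survives the union bound.

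The fix is to apply Grothendieck to the $nK\times nK$ matrix $F-\tilde F$ over $\mathcal{X}$ \emph{before} any reduction to bounded-entry test matrices, which is what the paper does and what your final paragraph (``via Grothendieck's inequality relating the cut norm to the SDP-relaxed quantity \ldots union-bounded over $2^{2n}$ choices'') already gestures at; the ``more precisely $\sup_{Z\in[-1,1]^{n\times n}}\ip{W,Z}$'' clause should simply be excised, since $\|W\|_{\infty\to 1}$ is a supremum over rank-one matrices $st^T$ with bounded entries, not over all matrices with bounded entries, and the two differ here by the entire margin of the lemma. With that correction your argument coincides with the paper's.
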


Lemma \ref{le: recovery} gives a condition under which $\hat{P}$ and $\tilde{P}$ will both be approximately block structured. It roughly states that if there exists $z \in [K]^n$ such that $(z_i,z_j) \in \arg \max_{ab} \bar{F}^{(ij)}_{ab}$ for all $i,j \in [n]$, then both $\hat{P}$ and $\tilde{P}$ will asymptotically have block structure corresponding to $z$. 

\begin{lemma}\label{le: recovery}
Let Assumptions \ref{as: rho} and \ref{as: thetahat} hold. If there exists $z \in [K]^n$ and $\Delta > 0$ such that $P, \hat{\theta}$ and $\bar{F}$ satisfy
\begin{equation} \label{eq: label recovery}
\bar{F}_{z_i,z_j}^{(ij)} \geq \bar{F}_{ab}^{(ij)} + \rho\Delta \qquad \forall i,j \in [n] \textrm{ and } a,b \notin \mathcal{Q}_{\hat{\theta}}(z_i,z_j),
\end{equation}
	then it holds that 
	\begin{equation}\label{eq: label recovery2}
	\frac{1}{n(n-1)} \sum_{i=1}^n \sum_{j=1}^n \sum_{a=1}^K \sum_{b=1}^K \hat{X}_{ab}^{(ij)}\cdot 1\{(a,b) \notin \mathcal{Q}_{\hat{\theta}}(z_i,z_j)\} \leq O_P\left(\frac{1}{\sqrt{n\rho}}\right),
	\end{equation}
\end{lemma}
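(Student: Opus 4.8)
The plan is to exploit the near‑optimality of $\hat X$ with respect to the idealized objective $\bar F$, guaranteed by Lemma \ref{le: main}, and convert it into concentration of $\hat X$ on the ``correct'' cells by means of the separation hypothesis \eqref{eq: label recovery}. The only probabilistic input is Lemma \ref{le: main}; the remaining steps are a deterministic manipulation of the semidefinite program.

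First I would introduce the integral feasible point $X^z \in \mathcal{X}$ associated with $z$: let $X^z = x x^T$ with $x \in \{0,1\}^{nK}$ defined by $x^{(i)} = e_{z_i}$, so that $(X^z)^{(ij)} = e_{z_i} e_{z_j}^T$. Then $X^z$ is positive semidefinite, elementwise nonnegative, and each of its $K\times K$ submatrices sums to one, hence $X^z \in \mathcal{X}$. Since $X^*$ maximizes $\langle \bar F, \cdot\rangle$ over $\mathcal{X}$, we have $\langle \bar F, X^*\rangle \ge \langle \bar F, X^z\rangle$, and therefore
\[ \langle \bar F, X^* - \hat X\rangle \;\ge\; \langle \bar F, X^z - \hat X\rangle . \]

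Next I would lower bound the right‑hand side. Using $\sum_{a,b} \hat X^{(ij)}_{ab} = 1$ for every $i,j \in [n]$, write
\[ \langle \bar F, X^z - \hat X\rangle \;=\; \sum_{i=1}^n \sum_{j=1}^n \sum_{a=1}^K \sum_{b=1}^K \left( \bar F^{(ij)}_{z_i z_j} - \bar F^{(ij)}_{ab}\right) \hat X^{(ij)}_{ab}. \]
For $(a,b) \in \mathcal{Q}_{\hat{\theta}}(z_i,z_j)$ the summand vanishes, since $\bar F^{(ij)}_{ab} = P_{ij}\log\hat\theta_{ab} + (1-P_{ij})\log(1-\hat\theta_{ab})$ depends on $(a,b)$ only through $\hat\theta_{ab}$, and $\hat\theta_{ab} = \hat\theta_{z_i z_j}$ on $\mathcal{Q}_{\hat{\theta}}(z_i,z_j)$ (note $\hat\theta_{ab}\in(0,1)$ under Assumption \ref{as: thetahat}, so $\bar F$ is finite). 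For $(a,b)\notin\mathcal{Q}_{\hat{\theta}}(z_i,z_j)$, hypothesis \eqref{eq: label recovery} gives $\bar F^{(ij)}_{z_i z_j} - \bar F^{(ij)}_{ab} \ge \rho\Delta > 0$. Since $\hat X \ge 0$, every summand is nonnegative and
\[ \langle \bar F, X^z - \hat X\rangle \;\ge\; \rho\Delta \sum_{i=1}^n \sum_{j=1}^n \sum_{a=1}^K \sum_{b=1}^K \hat X^{(ij)}_{ab}\cdot 1\{(a,b)\notin \mathcal{Q}_{\hat{\theta}}(z_i,z_j)\}. \]

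Finally I would combine the two displays with Lemma \ref{le: main}: on the event $\{\tfrac{1}{n^2\rho}\langle\bar F, X^* - \hat X\rangle \le C_1/\sqrt{n\rho}\}$, which by Lemma \ref{le: main} has probability at least $1 - e^{-C_2 n}$, we obtain
\[ \frac{\Delta}{n^2} \sum_{i=1}^n\sum_{j=1}^n\sum_{a=1}^K\sum_{b=1}^K \hat X^{(ij)}_{ab}\, 1\{(a,b)\notin\mathcal{Q}_{\hat{\theta}}(z_i,z_j)\} \;\le\; \frac{C_1}{\sqrt{n\rho}} , \]
and dividing by $\Delta$ and using $n(n-1)\ge n^2/2$ yields \eqref{eq: label recovery2} with implied constant depending only on $\Delta$, $K$ and $c$. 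Since the deterministic steps are elementary, I do not expect a genuine obstacle here beyond Lemma \ref{le: main} itself (already granted); within the present argument, the only point requiring care is checking that the separation in \eqref{eq: label recovery} controls $\bar F^{(ij)}_{z_i z_j} - \bar F^{(ij)}_{ab}$ for \emph{all} $(a,b)$, including those in $\mathcal{Q}_{\hat{\theta}}(z_i,z_j)$ where the difference degenerates to exact equality rather than a strict gap.
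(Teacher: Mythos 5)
Your proposal is correct and follows essentially the same route as the paper: compare $\hat X$ to the integral feasible point associated with $z$, use the separation hypothesis \eqref{eq: label recovery} to lower bound $\ip{\bar F, X^* - \hat X}$ by $\rho\Delta$ times the mass on the wrong cells, and invoke Lemma \ref{le: main}. The only (harmless) difference is that you use the inequality $\ip{\bar F, X^*} \geq \ip{\bar F, X^z}$ where the paper asserts the equality $\ip{\bar F, X^*} = \sum_{ij}\bar F^{(ij)}_{z_i z_j}$; your weaker observation suffices.
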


Lemma \ref{le: KL bernstein} states that the error between $P$ and the randomized estimate $\tilde{P}$ converges to its expectation.

\begin{lemma}\label{le: KL bernstein}
	Let Assumptions \ref{as: rho} and \ref{as: thetahat} hold. Then
	\begin{align*}
		\frac{1}{n(n-1)\rho} \sum_{i=1}^n \sum_{j=1}^n KL\left(P_{ij}, \tilde{P}_{ij}\right) & = \frac{1}{n(n-1)\rho} \sum_{i=1}^n \sum_{j=1}^n \mathbb{E}\left[KL\left(P_{ij}, \tilde{P}_{ij}\right)\right] \\
		& \hskip.5cm {} + O_P\left(\frac{1}{\sqrt{n\rho}}\right).
	\end{align*}
		
\end{lemma}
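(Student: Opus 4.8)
The plan is to express the entire Kullback--Leibler sum through the linear functional $\ip{\bar F,\cdot}$, whose fluctuations are already controlled by Lemma~\ref{le: main}, so that essentially no new concentration analysis is needed beyond one routine Bernstein estimate. The key algebraic step is the identity $KL(p,q) = -H(p) - [\,p\log q + (1-p)\log(1-q)\,]$, where $H(p) = -p\log p - (1-p)\log(1-p)$ is the binary entropy. I would introduce the random matrix $\tilde X\in\mathcal X$ whose block $\tilde X^{(ij)}$ encodes the draw in \eqref{eq: Phat randomized}, i.e.\ $\tilde X^{(ij)}$ is supported on the level set $\{(a,b):\hat\theta_{ab}=\tilde P_{ij}\}$ and has conditional mean $\mathbb{E}[\tilde X^{(ij)}\mid A] = \hat X^{(ij)}$. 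Because $\bar F^{(ij)}_{ab}$ depends on $(a,b)$ only through $\hat\theta_{ab}$, substituting the identity into \eqref{eq: Fbar theorem} gives, deterministically,
\[
\sum_{i=1}^n\sum_{j=1}^n KL(P_{ij},\tilde P_{ij}) \;=\; -\sum_{i=1}^n\sum_{j=1}^n H(P_{ij}) \;-\; \ip{\bar F,\tilde X},
\]
and taking $\mathbb{E}[\cdot\mid A]$ then $\mathbb{E}[\cdot]$ replaces $\tilde X$ by $\hat X$ and then by $\mathbb{E}\hat X$, while the (nonrandom) entropy term cancels on subtraction. Hence, after dividing by $n(n-1)\rho$, it suffices to show $\ip{\bar F,\tilde X}-\mathbb{E}\ip{\bar F,\hat X} = O_P(n^{3/2}\sqrt\rho)$, since $n^{3/2}\sqrt\rho$ is of order $n(n-1)\rho\cdot(n\rho)^{-1/2}$.

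I would split this into $(\ip{\bar F,\tilde X}-\ip{\bar F,\hat X}) + (\ip{\bar F,\hat X}-\mathbb{E}\ip{\bar F,\hat X})$. The first bracket is, conditionally on $A$, a sum of independent centered increments indexed by the dyads, each equal to $\ip{\bar F^{(ij)},\tilde X^{(ij)}}-\ip{\bar F^{(ij)},\hat X^{(ij)}}$. Assumption~\ref{as: thetahat} forces $\hat\theta_{ab}\asymp\rho$, whence $|\bar F^{(ij)}_{ab}| = P_{ij}\log\frac{1}{\hat\theta_{ab}} + (1-P_{ij})\log\frac{1}{1-\hat\theta_{ab}} \le P_{ij}\log(C/\rho) + C\rho$ for a constant $C=C(c)$; using $\sum_{ij}P_{ij}=n(n-1)\rho$, the conditional variances then sum to $O(n^2\rho\log^2(1/\rho))$ and each increment has size $O(\log(1/\rho))$, so Bernstein's inequality conditional on $A$ bounds the first bracket by $O_P(n\sqrt\rho\,\log(1/\rho))$. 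By Assumption~\ref{as: rho}, $\rho\ge 1/n$ gives $\log(1/\rho)\le\log n = o(\sqrt n)$, so this bracket is $o_P(n^{3/2}\sqrt\rho)$ and poses no difficulty.

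The second bracket is where the substance lies, and it follows directly from Lemma~\ref{le: main} once one observes that $X^*$, and hence the scalar $\ip{\bar F,X^*}$, is deterministic because $\bar F$ is a function of the nonrandom $P$ and $\hat\theta$ only. Since $\ip{\bar F,\hat X}\le\ip{\bar F,X^*}$ always and, by Lemma~\ref{le: main}, $\ip{\bar F,X^*-\hat X}\le C_1 n^{3/2}\sqrt\rho$ with probability $1-e^{-C_2 n}$, the quantity $\ip{\bar F,\hat X}$ is squeezed into an interval of length $C_1 n^{3/2}\sqrt\rho$ just below the constant $\ip{\bar F,X^*}$ with high probability. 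As $|\ip{\bar F,\hat X}|\le\sum_{ij}\max_{ab}|\bar F^{(ij)}_{ab}| = O(n^2\rho\log(1/\rho))$ deterministically, the exceptional event of probability $e^{-C_2 n}$ perturbs the mean by only $o(n^{3/2}\sqrt\rho)$, so $\mathbb{E}\ip{\bar F,\hat X}$ also lies within $O(n^{3/2}\sqrt\rho)$ of $\ip{\bar F,X^*}$; the triangle inequality then yields $\ip{\bar F,\hat X}-\mathbb{E}\ip{\bar F,\hat X} = O_P(n^{3/2}\sqrt\rho)$. Combining the two brackets and dividing by $n(n-1)\rho$ finishes the proof.

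I expect the main obstacle to be the second bracket if one attacks it head-on: $\hat X$ is the argmax of an SDP and can change discontinuously when a single entry of $A$ is flipped, so $\ip{\bar F,\hat X}$ is not Lipschitz in $A$ and a naive McDiarmid / bounded-differences argument fails. The entropy identity of the first step is precisely what circumvents this, routing the whole KL fluctuation through $\ip{\bar F,\cdot}$, which Lemma~\ref{le: main} already controls to within $O(n^{3/2}\sqrt\rho)$ of a deterministic value; the only residual randomness --- the rounding of $\hat X$ to $\tilde X$ --- is a harmless sum of bounded independent terms.
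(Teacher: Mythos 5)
Your argument is sound, but it takes a genuinely different route because you read $\mathbb{E}[KL(P_{ij},\tilde{P}_{ij})]$ as the \emph{unconditional} expectation, whereas the paper means the conditional expectation given $\hat{X}$ (equivalently, given $A$) --- this is clear from how the lemma is invoked in \eqref{eq: cor general 4}, where $\mathbb{E}[KL(P_{ij},\tilde{P}_{ij})]$ is replaced by the still-random quantity $H(P_{ij})-\ip{\bar{F}^{(ij)},\hat{X}^{(ij)}}$. Under the paper's reading, the entire proof is just your ``first bracket'': conditional on $\hat{X}$ the dyads are independent, and a single Bernstein bound on the rounding noise gives the result; the paper additionally re-centers each term by $P_{ij}\log\rho$ so that the increments are $O(1)$ rather than $O(\log(1/\rho))$, which you instead absorb via $\log(1/\rho)\le\log n=o(\sqrt{n})$ using Assumption \ref{as: rho} --- both work. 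Your second bracket, controlling $\ip{\bar{F},\hat{X}}-\mathbb{E}\ip{\bar{F},\hat{X}}$ by squeezing $\ip{\bar{F},\hat{X}}$ against the deterministic optimum $\ip{\bar{F},X^*}$ via Lemma \ref{le: main} (with the exceptional event handled by the crude bound $|\ip{\bar{F},\hat{X}-X^*}|=O(n^2\rho\log n)$ times $e^{-C_2n}$), is correct and is precisely how you sidestep the non-Lipschitz dependence of $\hat{X}$ on $A$; but it makes the lemma depend on Lemma \ref{le: main}, which the paper's self-contained conditional version does not. The trade-off: your version is a legitimately stronger-sounding statement and still suffices for Theorem \ref{th: Ptilde} (since the subsequent step bounds $-\ip{\bar{F},\hat{X}}$ by $-\ip{\bar{F},X^*}$ plus the same error anyway), while the paper's version isolates exactly the rounding fluctuation and defers all comparison with $X^*$ to the theorem's proof.
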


Bernstein's inequality states that for independent $x_1,\ldots,x_n$ satisfying $|x_i| \leq b$, with variance $\sigma_i^2$ and expectation $\mathbb{E}x_i = 0$, it holds that
\[\mathbb{P}\left(\frac{1}{n}\sum_{i=1}^n x_i \geq t\right) \leq \exp\left(-\frac{nt^2}{\frac{2}{n}\sum_{i=1}^n \sigma_i^2 + bt/3}\right).\]

Grothendieck's inequality \cite[Th. 3.1]{guedon2015community} states that there exists a universal constant $C_G$ such that for any matrix $M \in \mathbb{R}^{n \times n}$,
\begin{equation} \label{eq: grothendeick}
\max_{X \in \mathcal{M}} | \ip{M, X} | \leq C_G \max_{s,t \in \{-1,1\}^n} | s^T M t|,
\end{equation}
where $\mathcal{M} = \{UV^T: \textrm{ all rows of } U, V \in B_2^n\}$, and $B_2^n = \{x \in \mathbb{R}^n: \|x\|_2 \leq 1\}$ is the $n$-dimensional unit ball. 

Lemmas \ref{le: main}, \ref{le: recovery}, and \ref{le: KL bernstein} are proven in Section \ref{sec: lemma proofs}. 



\subsection{Proof of Theorems \ref{th: Ptilde} and \ref{th: sbm}}

\begin{proof}[Proof of Theorem \ref{th: Ptilde}]
Given $z \in [K]^n$, let the vector $x(z) \in \{0,1\}^{nK}$ be given by
 \[x(z) = \left[ e_{z_1}^T \, \cdots \, e_{z_n}^T \right]^T.\]  
Theorem \ref{th: Ptilde} holds by the the following steps:	
\begin{align}
\nonumber	& \frac{1}{n(n-1)\rho} \sum_{i=1}^n \sum_{j=1}^n KL\left(P_{ij}, \tilde{P}_{ij}\right) \\
	& \hskip1cm {} = \frac{1}{n(n-1)\rho} \sum_{i=1}^n \sum_{j=1}^n \mathbb{E}\left[KL\left(P_{ij}, \tilde{P}_{ij}\right)\right] + O_P\left(\frac{1}{\sqrt{n\rho}}\right) \label{eq: cor general 3}\\
	& \hskip1cm {} = \frac{1}{n(n-1)\rho} \sum_{i=1}^n \sum_{j=1}^n H(P_{ij}) - \ip{\bar{F}, \hat{X}} + O_P\left(\frac{1}{\sqrt{n\rho}}\right) \label{eq: cor general 4}\\
	& \hskip1cm {} \leq \frac{1}{n(n-1)\rho} \sum_{i=1}^n \sum_{j=1}^n H(P_{ij}) - \ip{\bar{F}, X^*} + O_P\left(\frac{1}{\sqrt{n\rho}}\right) \label{eq: cor general 5} \\
	& \hskip1cm {} \leq \frac{1}{n(n-1)\rho} \sum_{i=1}^n \sum_{j=1}^n H(P_{ij}) - \max_{z\in [K]^n} \ip{\bar{F}, x(z)x(z)^T} + O_P\left(\frac{1}{\sqrt{n\rho}}\right) \label{eq: cor general 6}\\
 	& \hskip1cm {} \leq \min_{z \in [K]^n}  \frac{1}{n(n-1)\rho} \sum_{i=1}^n \sum_{j=1}^n KL\left(P_{ij}, \hat{\theta}_{z_i,z_j}\right) + O_P\left(\frac{1}{\sqrt{n\rho}}\right)  \label{eq: cor general 7}
\end{align}
where \eqref{eq: cor general 3} holds by Lemma \ref{le: KL bernstein}; \eqref{eq: cor general 4} and \eqref{eq: cor general 7} follow from the identity
\[ KL(P_{ij}, \hat{\theta}_{ab}) = H(P_{ij}) - \bar{F}_{ab}^{(ij)},\]
with \eqref{eq: cor general 4} additionally using \eqref{eq: Phat randomized}, the definition of $\tilde{P}$; \eqref{eq: cor general 5} holds by Lemma \ref{le: main}; and \eqref{eq: cor general 6} holds because $x(z)x(z)^T \in \mathcal{X}$, implying that $\ip{\bar{F}, X^* - x(z)x(z)^T} \geq 0$.

\end{proof}

\begin{proof}[Proof of Theorem \ref{th: sbm}]
	Define $\hat{\pi}$ by
	\[\hat{\pi}_a = \frac{1}{n} \sum_{i=1}^n 1\{z_i=a\}.\]
	By Bernstein's inequality, it can be seen that $\hat{\alpha} = \rho(1+o_P(1))$, and that $\rho = \frac{1}{{n \choose 2}} \sum_{i<j} P_{ij}$ satisfies
	\begin{align*}
		\rho & = \sum_{a=1}^K \sum_{b=1}^K \hat{\pi}_a \hat{\pi}_b \theta^*_{ab}(1 + o(1)) \\
		& = \alpha(1 + o_P(1)),
	\end{align*}
	so that $\hat{\alpha} \rightarrow \rho \rightarrow \alpha$. As a result, Assumptions \ref{as: rho} and \ref{as: thetahat} can be seen to hold (in probability). Let $\Delta'$ be defined by
	\[ \Delta' = \min_{a,b \in [K]^2} \min_{c,d \notin \mathcal{Q}_{B^*}} B_{ab}^* \log \frac{\hat{B}_{ab}}{\hat{B}_{cd}} - (\hat{B}_{ab} - \hat{B}_{cd}).\]
	It can be seen that $\Delta' > 0$ by assumption. To bound $\bar{F}_{z_i,z_j}^{(ij)} - \bar{F}_{cd}^{(ij)}$ so as to apply Lemma \ref{le: recovery}, observe for all $i,j \in [n]$ and $c,d \notin \mathcal{Q}_{\hat{\theta}}(z_i,z_j)$: 
	\begin{align*}
		\bar{F}_{z_i,z_j}^{(ij)} - \bar{F}_{cd}^{(ij)} & = P_{ij} \log \frac{\hat{\theta}_{z_i,z_j}}{\hat{\theta}_{cd}} + (1- P_{ij}) \log \frac{1 - \hat{\theta}_{z_i,z_j}}{1 - \hat{\theta}_{cd}} \\
		& = \alpha B^*_{z_i,z_j} \log \frac{\hat{B}_{z_i,z_j}}{\hat{B}_{cd}} + \log \frac{1 - \hat{\alpha}\hat{B}_{z_i,z_j}}{1 - \hat{\alpha}\hat{B}_{cd}} - \alpha B^*_{z_i,z_j} \log \frac{1 - \hat{\alpha}\hat{B}_{z_i,z_j}}{1 - \hat{\alpha}\hat{B}_{cd}} \\
		&= \alpha B^*_{z_i,z_j} \log \frac{\hat{B}_{z_i,z_j}}{\hat{B}_{cd}} + \hat{\alpha}(\hat{B}_{z_i,z_j} - \hat{B}_{cd}) + O(\hat{\alpha}^2) + O(\alpha \hat{\alpha}) \\
		& = \rho B^*_{z_i,z_j} \log \frac{\hat{B}_{z_i,z_j}}{\hat{B}_{cd}} - \rho(\hat{B}_{z_i,z_j} - \hat{B}_{cd}) + o_P(\alpha) \\
		& \geq \rho \Delta' + o_P(\alpha),
	\end{align*}
	where the $o_P(\alpha)$ terms are bounded uniformly over all $(i,j) \in [n]^2$ and $(c,d) \in [K]^4$. This implies that for all $i,j \in [n]$ and all $c,d \notin \mathcal{Q}_{B^*}(z_i,z_j)$, it holds that
	\[\bar{F}_{z_i,z_j}^{(ij)} - \bar{F}_{cd}^{(ij)} \geq \rho(\Delta' + o_P(1)),\]
	implying that the conditions of Lemma \ref{le: recovery} will hold in probability for any $\Delta < \Delta'$. Lemma \ref{le: recovery} thus implies
	\begin{equation}
	\frac{1}{n^2} \sum_{i=1}^n \sum_{j=1}^n \sum_{a=1}^K \sum_{b=1}^K \hat{X}^{(ij)}_{ab}\cdot 1\{(a,b) \notin \mathcal{Q}_{B^*}(z_i,z_j)\}  = O_P\left(\frac{1}{\sqrt{n\alpha}}\right). \label{eq: sbm bound}
	\end{equation}
	Let $\mathcal{E}_{ij}$ for $i,j \in [n]$ be given by
	\[ \mathcal{E}_{ij} = \sum_{a=1}^K \sum_{b=1}^K \hat{X}_{ab}^{(ij)}\cdot 1\{(a,b) \notin \mathcal{Q}_{B^*}(z_i,z_j)\}. \]

	To show \eqref{eq: th sbm 1}, we apply \eqref{eq: sbm bound} as follows: 
	\begin{align*}
	\frac{1}{n^2}\sum_{i=1}^n \sum_{j=1}^n 1\{\hat{P}_{ij} \neq \hat{\theta}_{z_iz_j}\} & \leq \frac{1}{n^2} \sum_{i=1}^n \sum_{j=1}^n 1\{ \mathcal{E}_{ij} \geq 1/2\} \\
	& \leq \frac{1}{n^2} \sum_{i=1}^n \sum_{j=1}^n 2\mathcal{E}_{ij} \\
	& = O_P\left(\frac{1}{\sqrt{n\alpha}}\right),
	\end{align*}
	where the last equality follows from \eqref{eq: sbm bound}. 
	
	To show that \eqref{eq: th sbm 1} holds with $\tilde{P}$ in place of $\hat{P}$, we observe the following bounds:
	\begin{align*}
	 	\mathbb{E}\left[\frac{1}{n^2} \sum_{i=1}^n \sum_{j=1}^n 1\{\tilde{P}_{ij} \neq \hat{\theta}_{z_iz_j}\}\right] &= \frac{1}{n^2} \sum_{i=1}^n \sum_{j=1}^n \mathcal{E}_{ij} \\
		& = O_P\left(\frac{1}{\sqrt{n\alpha}}\right)
	\end{align*}
	and
	\begin{align*}
		\operatorname{Var}\left[\frac{1}{n^2} \sum_{i=1}^n \sum_{j=1}^n 1\{\tilde{P}_{ij} \neq \hat{\theta}_{z_iz_j} \}\right] & 
		= \frac{4}{n^4} \sum_{i < j} \operatorname{Var}\left[1\{\tilde{P}_{ij} \neq \theta_{ij}\}\right] \\
		& \leq \frac{4}{n^4} \sum_{i < j} \mathbb{P}\left( \tilde{P}_{ij} \neq \theta_{ij}\right) \\
		& = \frac{4}{n^4} \sum_{i <j} \mathcal{E}_{ij}  \\
		& = \frac{1}{n^2} O_P\left(\frac{1}{\sqrt{n\alpha}}\right).
	\end{align*}
Applying Chebychev, which states that $\mathbb{P}\left(|X - \mathbb{E}X| \geq k \sqrt{\operatorname{Var}X}\right) \leq k^{-2}$, with $k = n^{3/4}\alpha^{-1/4}$ yields that 
	\[\frac{1}{n^2} \sum_{i=1}^n \sum_{j=1}^n 1\{\tilde{P}_{ij} \neq \hat{\theta}_{z_iz_j}\} = O_P\left(\frac{1}{\sqrt{n\alpha}}\right).\]
\end{proof}

\subsection{Proof of Lemmas \ref{le: main}, \ref{le: recovery}, and \ref{le: KL bernstein}} \label{sec: lemma proofs}

\begin{proof}[Proof of Lemma \ref{le: main}]

Let $\tilde{F} \in \mathbb{R}^{nK \times nK}$ be a re-centered version of $\bar{F}$, with submatrices $\tilde{F}^{(ij)} \in \mathbb{R}^{K \times K}$ given by
\[ \tilde{F}^{(ij)} = \bar{F}^{(ij)} + (A_{ij} - P_{ij})\log \rho \cdot \obf \obf^T.\]
Alegebraic manipulation yields that
\begin{align}
\nonumber \ip{\bar{F},X^*} - \ip{\bar{F},\hat{X}} & = \ip{\tilde{F},X^*} + \ip{\bar{F} -\tilde{F},X^*} - \ip{\tilde{F},\hat{X}} - \ip{\bar{F} - \tilde{F},\hat{X}} \\
 & =  \ip{\tilde{F},X^*} - \ip{\tilde{F},\hat{X}} \label{eq: th1 recenter} \\
	& \leq \ip{\tilde{F},X^*} - \ip{F,X^*} + \ip{F,\hat{X}} - \ip{\tilde{F},\hat{X}} \label{eq: th1 basic eq} \\
\nonumber	& \leq 2 \max_{X \in \mathcal{X}} \left| \ip{\bar{F} - F, X}\right| \\
	& \leq 2C_G \max_{s,t \in \{-1,1\}^{nK}} s^T(F - \tilde{F})t \label{eq: th1 le1b},
\end{align}
where \eqref{eq: th1 recenter} holds because $\tilde{F}^{(ij)} - \bar{F}^{(ij)} \propto \obf \obf^T$ for all $i,j \in [n]$, implying that $\ip{\tilde{F} - \bar{F},X} = \ip{\tilde{F} - \bar{F},X'}$ for all $X, X' \in \mathcal{X}$; \eqref{eq: th1 basic eq} holds because $\hat{X}$ maximizes $\ip{F, X}$, implying that $\ip{F, X^* - \hat{X}} \leq 0$; and \eqref{eq: th1 le1b} follows by Grothendieck's inequality and because $\mathcal{X} \subset \mathcal{M}$.

It remains to bound the right hand side of \eqref{eq: th1 le1b}. By the definition of $\tilde{F}$, it can be seen that
\[F_{ab}^{(ij)} - \tilde{F}_{ab}^{(ij)} = (A_{ij} - P_{ij}) \log \frac{\hat{\theta}_{ab}/\rho}{1 - \hat{\theta}_{ab}},\]
so that 
\[s^T (F - \tilde{F})t = \sum_{i=1}^n \sum_{j=1}^n \sum_{a=1}^K \sum_{b=1}^K s_a^{(i)} t_b^{(j)} (A_{ij} - P_{ij}) \log \frac{\hat{\theta}_{ab}/\rho}{1 - \hat{\theta}_{ab}}.\]
Given $s,t \in \{-1,1\}^{nK}$ define $x_{ij} \equiv x_{ij}(s,t)$ by
\[ x_{ij} = (A_{ij} - P_{ij}) \sum_{a=1}^K \sum_{b=1}^K  \left(s_a^{(i)}t_b^{(j)} + s_a^{(j)}t_b^{(j)}\right)\log \frac{\hat{\theta}_{ab}/\rho}{1 - \hat{\theta}_{ab}},\]
so that $s^T (F - \tilde{F})t = \sum_{i<j} x_{ij}$. Using $\rho/c \leq \hat{\theta}_{ab}/(1-\hat{\theta}_{ab}) \leq c\rho$ which holds by Assumption \ref{as: thetahat} and letting $C = 2|\log c|$, it can be seen that
 \begin{align*} 
 \operatorname{Var}(x_{ij} + x_{ji}) \leq  P_{ij}(1- P_{ij}) K^4 C^2 \qquad \textrm{ and } \qquad |x_{ij}| \leq K^2 C.
 \end{align*}
 By Bernstein's inequality, it follows that
\[ \mathbb{P}\left[ \frac{1}{{n\choose 2}} \sum_{i,j:i<j}  x_{ij} \geq \epsilon\right] \leq \exp\left(-\frac{{n \choose 2}\epsilon^2}{K^4 C^2 \rho + 2K^2 C \epsilon}\right),\]
where we have used the fact that ${{n \choose 2}}^{-1} \sum_{i<j} P_{ij}(1-P_{ij}) \leq \rho$. Letting $\epsilon = C_1\sqrt{\frac{\rho}{n}}$ for any value of $C_1$ implies 
\[ \mathbb{P}\left[ \frac{1}{{n\choose 2}} \sum_{i,j:i<j}  x_{ij} \geq C_1\sqrt{\frac{\rho}{n}} \right] \leq \exp\left(-\frac{\frac{1}{2}nC_1^2}{K^4 C^2 + 2K^2C}\right).\]
Applying a union bound over all $s,t \in \{-1,1\}^{nK}$ implies
\[ \mathbb{P}\left[ \max_{s,t \in \{-1,1\}^{nK}} \frac{1}{{n\choose 2}} \sum_{i,j:i<j}  x_{ij} \geq C_1\sqrt{\frac{\rho}{n}}\right] \leq 2^{nK} \exp\left(-\frac{\frac{1}{2}nC_1^2}{K^4 C^2 + 2K^2C}\right),\]
which implies for all $C_1$ satisfying $\frac{1}{2} C_1^2/(K^4C^2 + 2K^2CC_1) > K \log 2$ that 
\begin{equation}\label{eq: concentration}
 \mathbb{P}\left[ \max_{s,t \in \{-1,1\}^{nK}} \frac{1}{{n\choose 2}} \sum_{i,j:i<j}  x_{ij} \geq C_1\sqrt{\frac{\rho}{n}} \right] = e^{-C_2n},
\end{equation}
where $C_2 = \frac{1}{2} C_1^2/(K^4 C^2 + 2K^2CC_1) - K\log 2$. Since $s^T(F - \tilde{F})t = \sum_{i<j} x_{ij}$, combining \eqref{eq: concentration} and \eqref{eq: th1 le1b} proves the lemma.
\end{proof}

\begin{proof}[Proof of Lemma \ref{le: recovery}]
	It holds that
	\begin{align*}
		\ip{\bar{F},\hat{X}} & = \sum_{i=1}^n \sum_{j=1}^n \sum_{a=1}^K \sum_{b=1}^K \bar{F}_{ab}^{(ij)} \hat{X}_{ab}^{(ij)} \\
		& = \sum_{i=1}^n \sum_{j=1}^n \left[\bar{F}^{(ij)}_{z_i,z_j} + \sum_{a=1}^K \sum_{b=1}^K \left(\bar{F}_{ab}^{(ij)} - \bar{F}_{z_i,z_j}^{(ij)}\right) \hat{X}_{ab}^{(ij)}\right] \\
		& \leq \sum_{i=1}^n \sum_{j=1}^n \left[ \bar{F}^{(ij)}_{z_i,z_j} - \rho \Delta \sum_{a=1}^K \sum_{b=1}^K \hat{X}_{ab}^{(ij)}\cdot 1\{(a,b) \notin \mathcal{Q}_{\hat{\theta}}(z_i,z_j)\}\right].
	\end{align*}
	Rearranging and using $\ip{\bar{F}, X^*} = \sum_{ij} \bar{F}^{(ij)}_{z_i,z_j}$ (as implied by \eqref{eq: label recovery}) yields
	\[ \frac{\ip{\bar{F},X^*} - \ip{\bar{F},\hat{X}}}{\rho\Delta} \geq \sum_{i=1}^n \sum_{j=1}^n \sum_{a=1}^K \sum_{b=1}^K \hat{X}_{ab}^{(ij)}\cdot 1\{(a,b) \notin \mathcal{Q}_{\hat{\theta}}(z_i,z_j)\},\]
	and dividing both sides by $n^2$ and using Lemma \ref{le: main} yields that with probability at least $1-e^{-C_2n}$,
	\[\frac{C_1/\Delta}{\sqrt{n\rho}} \geq \frac{1}{n^2}\sum_{i=1}^n \sum_{j=1}^n \sum_{a=1}^K \sum_{b=1}^K \hat{X}_{ab}^{(ij)}\cdot 1\{(a,b) \notin \mathcal{Q}_{\hat{\theta}}(z_i,z_j)\},\]
	proving the lemma.
\end{proof}

\begin{proof}[Proof of Lemma \ref{le: KL bernstein}]
Let $x_{ij}$ for $i,j \in [n]$ be given by
\[ x_{ij} = H(P_{ij}) - P_{ij}\log \rho - KL\left(P_{ij}, \tilde{P}_{ij}\right).\]	
It can be seen that $x_{ij}$ for $i<j$ are independent random variables with distributions
\[ x_{ij} = P_{ij} \log \frac{\hat{\theta}_{ab}/\rho}{1 - \hat{\theta}_{ab}} + \log (1-\hat{\theta}_{ab}) \qquad \textrm{ with probability } \hat{X}_{ab}^{(ij)},\]
with $x_{ji} = x_{ij}$ for $i > j$ and $x_{ii} = 0$. By definition of $\{x_{ij}\},$ it holds that
\begin{align}
	\sum_{i=1}^n \sum_{j=1}^n \left( KL\left(P_{ij}, \tilde{P}_{ij}\right) -  \mathbb{E}\left[KL\left(P_{ij}, \tilde{P}_{ij}\right)\right] \right) &= \sum_{i=1}^n \sum_{j=1}^n \left( \mathbb{E}x_{ij} - x_{ij} \right). \label{eq: proof KL bernstein}
\end{align}
To bound the right hand side of \eqref{eq: proof KL bernstein}, observe that by Assumption \ref{as: thetahat}
\begin{align*}
	|x_{ij}| & \leq P_{ij} |\log c| + | \log(1+c\rho)| \\
	& \leq P_{ij} |\log c| + \frac{c\rho}{1+c\rho}
\end{align*}
and hence it holds that
\begin{align*}
	\max_{ij} |x_{ij} - \mathbb{E}x_{ij}| &\leq O(1) \\
	\frac{1}{n(n-1)} \sum_{i,j:i<j} \operatorname{Var}(x_{ij} + x_{ji}) &\leq O(\rho),
\end{align*}
Applying Bernstein's inequality thus yields 
\begin{align*}
	\mathbb{P}\left( \frac{1}{{n \choose 2}} \sum_{i,j:i<j} (x_{ij} - \mathbb{E}x_{ij}) \geq \epsilon \right) \leq \exp\left(-\frac{{n \choose 2}\epsilon^2}{O(\rho) + O(1)\epsilon}\right),
\end{align*}
and letting $\epsilon = (\rho/n)^{1/2}$ implies that $\frac{1}{{n\choose 2}\rho} \sum_{i,j} (x_{ij} - \mathbb{E}x_{ij}) = O_P\left((n\rho)^{-1/2}\right)$. Combining this bound with \eqref{eq: proof KL bernstein} proves the lemma.
\end{proof}

\section{Proof of Theorem \ref{th: association2}} \label{sec: appendix B}

Lemma \ref{le: association} can be found as E1-E4 in \cite{goemans1999semidefinite}, and as Theorem 2.6 in \cite{bailey2004association}.

\begin{proof}[Proof of Theorem \ref{th: association2}]
Let $\mathcal{B} = \operatorname{span}(B_0,\ldots,B_\ell)$, and let $\mathcal{A}$ denote the set
\[\mathcal{A} = \{X \in \mathbb{R}^{nK \times nK}: X^{(ij)} \in \mathcal{B}\quad \forall\,  i,j\},\]
and observe the following properties of $\mathcal{A}$:
\begin{enumerate}
	\item If $M \in \mathcal{A}$, then its submatrices $M^{(ij)}$ must be symmetric and have the same eigenvectors since they are in the span of an association scheme. Specifically, if $X^{t+1} + V^t \in \mathcal{A}$ then \eqref{eq: structured Y} holds.
	\item $\mathcal{A}$ is a linear space, so if $M_1,M_2 \in \mathcal{A}$, then $M_1 + M_2 \in \mathcal{A}$ as well.
	\item $0 \in \mathcal{A}$, so the initial values $X^0,W^0,Y^0,U^0,V^0$ are in $\mathcal{A}$ 
	\item Let $B = \sum_{i=0}^\ell \gamma_i B_i$ for some weights $\gamma_0,\ldots,\gamma_\ell$. Because $B_0,\ldots,B_\ell$ are binary with disjoint support, it holds that $\max(0, B) = \sum_i \max(0,\gamma_i)B_i$ and $\log(B) = \sum_i \log(\gamma_i) B_i$.	
	\item If $\hat{\theta} \in \mathcal{B}$ then $\log \hat{\theta}$ and $\log(1-\hat{\theta})$ are in $\mathcal{B}$ as well, by property 4. This implies the matrix $F$ given by \eqref{eq: F theorem} is in $\mathcal{A}$, since each submatrix $F^{(ij)}$ is a linear combination of $\log \hat{\theta}$ and $\log (1-\hat{\theta})$.
	\item If $M \in \mathcal{A}$ then $\max(0, M)$ is also in $\mathcal{A}$, by property 4.
	\item  If $M \in \mathcal{A}$, then the eigenvectors of each submatrix are orthogonal and include the vector $1$, by Lemma \ref{le: association}. As a result, the effect of the projection $\Pi_{\mathcal{X}}(M)$ is to change the eigenvalue associated with the vector $1$ in each submatrix, implying that $\Pi_{\mathcal{X}}(M) \in \mathcal{A}$.
	\item If $X^{t+1} + V^t \in \mathcal{A}$, then \eqref{eq: structured Y} and hence \eqref{eq: Pi_S} hold, which implies that $\Pi_{\mathcal{S}_+}(X^{t+1} + V^t) \in \mathcal{A}$.
\end{enumerate}
Using these properties, we show by induction that $X^t,W^t,Y^t,U^t,V^t \in \mathcal{A}$ for all $t$. As for the base case, it holds that $X^0,W^0,Y^0,U^0,V^0 \in \mathcal{A}$ by property 3. Now suppose that $X^t,W^t,Y^t, U^t,V^t \in \mathcal{A}$ for any $t$. Then by \eqref{eq: admm} it follows that $X^{t+1} \in \mathcal{A}$ by properties 2, 5, and 7; that $W^{t+1} \in \mathcal{A}$ by properties 2 and 6; that $Y^{t+1} \in \mathcal{A}$ by properties 2 and 8; and that $U^{t+1},V^{t+1} \in \mathcal{A}$ by property 2. This completes the induction argument.

Since $X^t,W^t,Y^t,U^t,V^t \in \mathcal{A}$ for all $t$, it follows by property 2 that $X^{t+1} + V^t \in \mathcal{A}$ for all $t$. By property 1, this implies that \eqref{eq: structured Y} holds for all $t$, proving the theorem.
\end{proof}

\section{Proof of Corollary \ref{cor: eigvec}} \label{sec: appendix C}

Here we prove Corollary \ref{cor: eigvec}, which states that the eigencoordinates of $\hat{P}$ or $\tilde{P}$ will approximate those of $P$ (up to a unitary transform) when Theorem \thtwo holds. This suggests that $\hat{z}$, which is computed by spectral clustering of $\hat{P}$, will converge to $z$ up to label permutation. 



\subsection{Intermediate results}

Lemma \ref{le: eigenval} bounds the eigenvalues of $P$ under Assumptions \asone and \astwo.

\begin{customlem}{5}\label{le: eigenval}
	Let Assumptions \asone and \astwo hold. Let $D \in [0,1]^{K \times K}$ denote the matrix $D = \operatorname{diag}(\pi)$. Let $\lambda^*_1, \ldots, \lambda_K^*$ and $\lambda_1, \ldots, \lambda_K$ respectively denote the sorted eigenvalues of $D^{1/2}B^*D^{1/2}$ and $P$. It holds that
	\[ \lambda_k = n\alpha(\lambda^*_k(1 + o_P(1))) \qquad k=1,\ldots,K.\]
\end{customlem}

%
	
We will use the following version of the Davis-Kahan theorem, taken from \cite[Th. 4]{yu2015useful}:

\begin{customthm}{3}\label{th: davis kahan}
Let $P, \hat{P} \in \mathbb{R}^{n \times n}$ be symmetric, with singular values $\sigma_1\geq \ldots \geq \sigma_n$ and $\hat{\sigma}_1 \geq \ldots \geq \hat{\sigma}_n$ respectively. Fix $1\leq r \leq s \leq n$ and assume that $\min(\sigma^2_{r-1} - \sigma^2_r, \sigma^2_s - \sigma^2_{s+1}) > 0$, where $\sigma^2_0 = \infty$ and $\sigma_{n+1} = -\infty$. Let $d = s-r+1$, and let $V = (v_r,v_{r+1},\ldots,v_s) \in \mathbb{R}^{n \times d}$ and $\hat{V} = (\hat{v}_r,\hat{v}_{r+1}, \ldots,\hat{v}_s) \in \mathbb{R}^{n \times d}$ have orthonormal columns satisfying $P v_j = \sigma_j u_j$ and $\hat{P}\hat{v}_j = \hat{\sigma}_j\hat{u}_j$ for $j=r,r+1,\ldots,s$. Then there exists orthogonal $\hat{O} \in \mathbb{R}^{d \times d}$ such that
\[ \|\hat{V}\hat{O} - V\|_F \leq \frac{2^{3/2} (2\sigma_1 + \|\hat{P} -P\|_{\operatorname{op}}) \|\hat{P} - P\|_F}{\min(\sigma_{r-1}^2 - \sigma_r^2, \sigma_s^2 - \sigma_{s+1}^2)}\]
\end{customthm}

\subsection{Proof of Corollary \ref{cor: eigvec} and Lemma \ref{le: eigenval}}
\begin{proof}[Proof of Corollary \ref{cor: eigvec}]
	Let $r=1$ and $s = K = \operatorname{rank}(P)$, so that $\sigma_s^2 - \sigma_{s+1}^2 = \lambda_K^2$. 
	By Theorem \ref{th: davis kahan}, it holds that	
\begin{align*}
	\|\hat{V}\hat{O} - V\|_F^2 \leq \left(\frac{2^{3/2} (2\sigma_1 + \|\hat{P} -P\|_{\operatorname{op}}) \|\hat{P} - P\|_F}{\lambda_K^2}\right)^2.
\end{align*}
It follows that 
\begin{align}
\nonumber	\|\hat{V}\hat{O} - V\|_F^2 & \leq \left(\frac{2^{3/2} O(n\alpha) \|\hat{P} - P\|_F}{(n\alpha \lambda_K^*)^2(1+o_P(1))}\right)^2 \\
	& = \left(\frac{2^{3/2} O(1) \|\hat{P} - P\|_F}{n\alpha (\lambda_K^*)^2 (1+o_P(1))}\right)^2, \label{eq: davis kahan 1}
\end{align}
where in the first inequality follows from $\lambda_K = n \alpha \lambda_K^*(1+o_P(1))$ by Lemma \ref{le: eigenval}, and also from 
\begin{align*}
2\sigma_1 + \|\hat{P} - P\|_{\operatorname{op}} & \leq 3 \|P\|_{\operatorname{op}} + \|\hat{P}\|_{\operatorname{op}} \\
& \leq 3n \max_{ij} P_{ij} + n \max_{ij} \hat{P}_{ij} \\
& \leq 3n\alpha \max_{ab} B^*_{ab} + n\alpha \max_{ab} \hat{B}_{ab}. 
\end{align*}
By Theorem \thtwo it holds that
\begin{align}
\nonumber	\frac{1}{n^2} \|\hat{P} - P \|_F^2 & = \frac{1}{n^2} \sum_{i=1}^n \sum_{j=1}^n (\hat{P}_{ij} - P_{ij})^2 \\
\nonumber	& \leq \frac{1}{n^2} \sum_{i=1}^n \sum_{j=1}^n 1\{\hat{P}_{ij} - P_{ij}\} \cdot \max_{i,j} (\hat{P}_{ij} - P_{ij})^2 \\
	& \leq O_P\left(\frac{1}{\sqrt{n\alpha}}\right) \cdot O(\alpha^2). \label{eq: davis kahan 2}
\end{align}
Substituting \eqref{eq: davis kahan 2} into \eqref{eq: davis kahan 1} yields
\begin{align*}
\|\hat{V}\hat{O} - V\|_F^2 \leq \frac{O(1)}{\sqrt{n\alpha}},
\end{align*}
completing the proof.
 \end{proof}
 
\begin{proof}[Proof of Lemma \ref{le: eigenval}]
Let $\hat{\pi} \in [0,1]^K$ be given by
\[ \hat{\pi}_a = \frac{1}{n} \sum_{i=1}^n 1\{z_i = a\},\]
and let $\hat{D} = \operatorname{diag}(\hat{\pi})$. Let $\tilde{u} \in \mathbb{R}^K$ denote an eigenvector of $\hat{D}^{1/2}B^*\hat{D}^{1/2}$ with eigenvalue $\hat{\lambda}$, let $\tilde{v} = \hat{D}^{-1/2}\tilde{u}$, and let $v \in \mathbb{R}^n$ be given by $v_i = \tilde{v}_{z_i}$ for $i \in [n]$.

It can be seen that 
\begin{align*}
	[Pv]_i & = [n \alpha B^* \hat{D} \tilde{v}]_{z_i} \\
	& = [n \alpha B^* \hat{D}^{1/2} \tilde{u}]_{z_i} \\
	& = [n \alpha \hat{D}^{-1/2} \hat{D}^{1/2} B^* \hat{D}^{1/2} \tilde{u}]_{z_i} \\
	& = [n \alpha \hat{D}^{-1/2} \hat{\lambda} \tilde{u}]_{z_i} \\
	& = [n \alpha \hat{\lambda} \tilde{v}]_{z_i} \\
	& = n\alpha \hat{\lambda} v_i,
\end{align*}
showing that $v$ is an eigenvector of $P$ with eigenvalue $n \alpha \hat{\lambda}$. Since $ \hat{D} \rightarrow D$, it follows that the eigenvalues of $\hat{D}^{1/2} B^* \hat{D}^{1/2}$ converge to those of $D^{1/2} B^* D^{1/2}$, completing the proof.
\end{proof}

\bibliographystyle{plain}
\bibliography{bibfile}

\end{document}